\documentclass[english,11pt,reqno,twoside]{amsart}

\usepackage[T1]{fontenc}      
\usepackage[utf8]{inputenc}     
\usepackage[english]{babel} 
\usepackage{lmodern}

\usepackage{amsmath,amssymb,amsthm}  
\usepackage{mathtools}         
\usepackage{mathrsfs}           
\usepackage{braket}   

\usepackage{graphicx}         
\usepackage{xcolor}             
\usepackage[normalem]{ulem}

\usepackage[noadjust]{cite}   
\usepackage{url}                

\usepackage[left=2.1cm,right=2.1cm,top=2cm,bottom=2cm]{geometry}
\usepackage{indentfirst}       
\usepackage{microtype}    
\usepackage{enumitem}

\usepackage[colorlinks=true,urlcolor=blue,
citecolor=red,linkcolor=blue,linktocpage,pdfpagelabels,
bookmarksnumbered,bookmarksopen]{hyperref}
\usepackage[hyperpageref]{backref}
\usepackage[colorinlistoftodos]{todonotes}

\newtheorem{thm}{Theorem}[section]

\newtheorem{lem}[thm]{Lemma}
\newtheorem{prop}[thm]{Proposition}

\theoremstyle{definition}

\theoremstyle{remark}
\newtheorem{rem}[thm]{Remark}

\numberwithin{equation}{section}
\DeclareMathSymbol{\C}{\mathalpha}{AMSb}{"43}

\newcommand{\R}{{\mathbb{R}}}

\newcommand{\X}{\mathcal{X}}

\newcommand{\inte}{\int_{\mathbb{R}^N}}

\begin{document}
	\title[the Pseudo-Relativistic Schr\"{o}dinger Equation with Logarithmic Nonlinearity]{Existence and Limiting Profiles of Normalized Travelling Wave   Solutions for the Pseudo-Relativistic Schr\"{o}dinger Equation with Logarithmic Nonlinearity}
	
	\author[P. d'Avenia]{Pietro d'Avenia}
	\address{P. d'Avenia
		\newline\indent Dipartimento di Meccanica, Matematica e Management,\newline\indent
		Politecnico di Bari
		\newline\indent
		Via Orabona 4,  70125  Bari, Italy}
	\email{pietro.davenia@poliba.it}
	
	\author[Q. He]{Qihan  He}
	\address{Q. He
		\newline \indent School of Mathematics \& Center for Applied Mathematics of Guangxi (Guangxi University)
		\newline \indent Guangxi University, 
		\newline \indent Nanning, Guangxi, P. R. China}
	\email{heqihan277@gxu.edu.cn}
	
	\author[A. Pomponio]{Alessio Pomponio}
	\address{A. Pomponio
		\newline\indent Dipartimento di Meccanica, Matematica e Management,\newline \indent
		Politecnico di Bari
		\newline\indent
		Via Orabona 4,  70125  Bari, Italy}
	\email{alessio.pomponio@poliba.it}

	\author[L. Yang]{Lianfeng Yang}
	\address{L. Yang
		\newline \indent School of Mathematics and Statistics, 
		\newline \indent Beijing Institute of Technology, 
		\newline \indent Haidian, 100081 Beijing, China}
	\email{yanglianfeng2021@163.com}
	
	\date{}

	\keywords{Pseudo-relativistic Schr\"{o}dinger equation; Logarithmic nonlinearity;  Normalized solutions; Concentration-compactness principle; Asymptotic behaviour.}
	
	\subjclass[2020]{35R11, 35S05, 35C07}

	\begin{abstract}
		We study the existence and asymptotic behaviour of normalized solutions to the following pseudo-relativistic Schr\"{o}dinger equation with logarithmic nonlinearity
		\begin{equation*}
			(\sqrt{-\Delta+m^2 }-m )u+i(v\cdot \nabla )u+\lambda u = u\log|u|^2+|u|^{p-2}u, \qquad \text{in }\R^N,
		\end{equation*}
		under the mass constraint
		$$\|u\|_2^2=a,$$
		where $m,a>0$, $2<p\le\frac{2N}{N-1}$ with $N\ge 2$, $v\in \mathbb{R}^N$ is the travelling velocity  with $|v|<1$, and $\lambda\in\R$ appears as Lagrange multiplier, as minima of the corresponding energy on the constraint.
		
		By applying variational method, we first provide a complete classification of the existence and nonexistence of such minima.
		In particular, for the mass-critical case $p=2+\frac{2}{N}$, we show that there exists a constant $a^\ast_v$ which is a threshold for the existence. 
		Based on this, we analyse the blow-up behaviour of such minimizers as $a$ approaches $a^\ast_v$ from below. Finally, we investigate the limiting profiles of  minimizers to problem  when $\lim\limits_{n\to\infty}a_n=a_0\in(0,+\infty)$ with $\{a_n\}\subset(0,+\infty)$ in the mass-subcritical case $2<p<2+\frac{2}{N}$ and $\lim\limits_{n\to\infty}a_n= a_0\in(0,a^\ast_v)$ with $\{a_n\}\subset(0,a^\ast_v)$ in the mass-critical case $p=2+\frac{2}{N}$, respectively.     
	\end{abstract}

	\maketitle

	\section{Introduction}
	This paper is concerned with the following 
	pseudo-relativistic Schr\"{o}dinger equation with logarithmic nonlinearity
	\begin{equation*}
		i\partial _t \psi=(\sqrt{-\Delta+m^2}-m )\psi- \psi\log|\psi|^2 - |\psi|^{p-2}\psi,~(t,x)\in\mathbb{R}^+\times\mathbb{R}^N,
	\end{equation*}
	where  the parameters $N\geq2$, $2<p\le\frac{2N}{N-1}$, and $\psi=\psi(t,x)$ is a complex-valued wave function, and the fractional operator $\sqrt{-\Delta+m^2}-m$ is the kinetic energy operator of a relativistic particle with mass $m>0$. The fractional operator $\sqrt{-\Delta+m^2}-m$ is also usually called the pseudo-differential operator and is defined by its symbol $\sqrt{{|k|}^2+m^2}-m$, namely,
	$$\left(\sqrt{-\Delta+m^2}-m\right)\psi=\mathcal{F}^{-1}\left [\left(\sqrt{{|k |}^2+m^2}-m\right)\mathcal{F}\left[\psi\right]\right ],$$
	where $\mathcal{F}$ and $\mathcal{F}^{-1}$ denote the Fourier transform and the inverse Fourier transform, respectively. Moreover, it is widely known that there is also a deep connection between $\sqrt{-\Delta+m^2}-m$ and the theory of stochastic processes. More precisely, 
	the operator $-\left(\sqrt{-\Delta+m^2}-m\right)$ is the infinitesimal generator of a L\'evy process
	$(X_t^{1,m})_{t\ge0}$, called the {\em relativistic $1$-stable process}, whose characteristic function is
	\[
	\mathbb{E}^{0}[e^{ik\cdot X_t^{1,m}}]
	=
	e^{-t(\sqrt{|k|^{2}+m^{2}}-m)},
	\qquad k\in\mathbb{R}^{N}.
	\]
	We refer to 
	\cite{CMS,BMR} and references therein for a more detailed discussion.

	In the past three decades, the structure and dynamic behaviour of the standing waves of the form $\psi(t,x)=e^{i\lambda t}\varphi(x)$ to equation 
	\begin{equation}\label{mu0}
		i\partial _t \psi=(\sqrt{-\Delta+m^2}-m )\psi - |\psi|^{p-2}\psi,~(t,x)\in\mathbb{R}^+\times\mathbb{R}^N,
	\end{equation}
	have been extensively studied.
	For instance, \cite{A1,A2,CS} investigated the existence and some qualitative properties such as symmetry or sign definiteness, and non-relativistic limit of the ground state solutions for such a problem.
	
	Actually,  a substantial body of literature has been devoted to the {\em pseudo-relativistic Hartree equation}, namely equation \eqref{mu0}  in which the power nonlinearity is replaced by $(|x|^{-1}\ast|\psi|^{2}) \psi$ (see \cite{CZN,FL,GZ,GZ1,HL,L1,L2,L3,F,LY,HYZ25} and references therein).

	On the other hand in the seminal paper \cite{C83}, the logarithmic Schr\"{o}dinger equation
	\begin{equation}\label{log}
		i\partial _t \psi=-\Delta\psi-\psi\log|\psi|^2, \qquad\text{in }\R^N,
	\end{equation}
	is studied. Looking for standing waves, due to the singularity of the logarithm at the origin the energy functional associated to \eqref{log} fails to be finite as well of class $C^1$ on $H^{1}(\R^N)$. Thus, the classical critical point theory cannot be directly applied. 
	In order to overcome these difficulties, in \cite{C83} a suitable Banach space has been introduced. Later on, this problem has been considered using different methods: non-smooth critical point theory in \cite{DMS,MA}, direction derivative and constrained minimization method in \cite{shuai},  penalization technique in \cite{KZ}.
	Subsequently, \cite{A17,DSZ} extended the previous results to the fractional case.

	Differently from the above works on standing waves, the authors of \cite{FJL07} investigated the existence, nonexistence and orbital stability of travelling solitary waves, namely, searching for solution of the form
	\begin{equation*}
		\psi(t,x)=e^{i\lambda t}\varphi\left(x-vt\right),
	\end{equation*}
	for the pseudo-relativistic Hartree equation. By applying variational method, they found that, for speeds $|v|<1$, there exists a threshold $N_c(v)>0$ so that the constrained minimization problem has no minimizers if the mass constraint $\|\varphi\|_2^2=\mathcal{N}\geq N_c(v)$ and admits at least one minimizer if $0<\mathcal{N}<N_c(v)$, where they mainly consider the influence of mass on the existence of solutions.
	
	Inspired by \cite{FJL07} and in the wake of \cite{DPSL26,HYZ25}, motivated by the fact that physicists are often interested in studying solutions having prescribed $L^2$-norm (physically, such solutions are called the normalized solutions), in this paper, we focus on studying the following stationary problem
	\begin{equation}\label{eq10271527}
		\begin{cases}
			(\sqrt{-\Delta+m^2 }-m )\varphi+i(v\cdot \nabla )\varphi+\lambda \varphi=\varphi\log|\varphi|^2+|\varphi|^{p-2}\varphi,
			&\text{in }\R^N,\\[3pt]
			\|\varphi\|_2^2=a,
		\end{cases}
	\end{equation}
	where the parameter $m,a>0$, $N\ge 2$, $\lambda\in\mathbb{R}$, and the travelling velocity $v\in\mathbb{R}^N$, $|v|<1$.
	
	Here, inspired by \cite{C83} and as in \cite{A17}, we consider the  reflexive Banach space
	$$W^{\frac12}(\R^N):=\left\{\varphi\in H^{\frac{1}{2}}(\R^N):|\varphi|^2\log|\varphi|^2\in L^1(\R^N)\right\}$$
	(see Section \ref{sec2} for more details). 
	Solutions to \eqref{eq10271527} can be obtained solving the minimization problem
	\begin{equation}\label{eq10271528}
		d_{p}(a):=\inf_{\varphi\in \mathcal{S}_a} E_{p}(\varphi ),
	\end{equation}
	where the energy functional $E_{p}(\varphi)$ is given by
	\begin{equation*}
		\begin{aligned}
			E_{p}(\varphi)
			&:=\frac{1}{2}\int_{\mathbb{R}^N}\bar{{\varphi}}(\sqrt{-\Delta+m^2 }-m+iv\cdot \nabla)  \varphi dx-\frac{1}{p} \|\varphi\|_p^{p}-\frac{1}{2}\inte|\varphi|^2(\log|\varphi|^2-1)dx,
		\end{aligned}
	\end{equation*}
	and the constraint $\mathcal{S}_a$ is defined as
	\begin{equation*}
		\mathcal{S}_a:=\left\{\varphi \in W^{\frac{1}{2}}(\mathbb{R}^N):\|\varphi\|_2^2=a\right\}.
	\end{equation*}

	Before stating our main results, we give some conventions and notations. We set
	\begin{equation}\label{eq01172054}
		a^\ast_v:=\|Q_v\|_2^2,
	\end{equation}
	where $Q_v\in H^{\frac{1}{2}}(\mathbb{R}^N)\setminus\{0\}$ is 
	an optimizer for inequality \eqref{eq11280935} and satisfies equation \eqref{eq11280944}. Observe that, as shown in \cite[Proposition 2.5]{DPSL26}, the number $a^\ast_v$ is well-defined. 
	
	Moreover, let
	$$\mathcal{X}_a:=\left\{u\in \mathcal{S}_a\cap C^{\infty}_c(\R^N, \mathbb R) : \inte|u|^2(\log|u|^2-1)dx\ge0 \right\},$$
	which is non-empty. Indeed, if $u\in\mathcal{S}_a$, setting $\phi_t:=t^{\frac{N}{2}}u(t\cdot)$, a simple calculation shows that
	\begin{equation*}
		\begin{aligned}
			\inte|\phi_t|^2(\log|\phi_t|^2-1)dx
			&=a(\log t^N-1)+\inte|u|^2\log|u|^2dx=:f(t),
		\end{aligned}
	\end{equation*}
	and
	$\lim\limits_{t\rightarrow+\infty}f(t)=+\infty$ and $\lim\limits_{t\rightarrow0^+}f(t)=-\infty.$ 
	Finally, for $2<p\le 2+\frac 2N$, let 
	\begin{equation}\label{mpstar}
		m_p^*(a):=\frac{p\sqrt{ 1-|v|^2}}{4}\inf_{u\in \mathcal{X}_a}\frac{\|\nabla u\|_2^2}{\|u\|_p^{p}}.
	\end{equation}
	We will see in Lemma \ref{le:m*} that $m_p^*(a)>0$ and is locally uniformly bounded with respect to $a$. This property plays an important role in the asymptotic analysis; see Remarks \ref{rem1.2} and \ref{rem}.

	Our first result can be stated as follows.
	\begin{thm}\label{Thm10291145}
		Suppose that $2<p<2+\frac{2}{N}$ with $N\ge 2$, and $m>m_p^*(a)$. Then, for any $a>0$, there exists at least a minimizer for problem (\ref{eq10271528}).
	\end{thm}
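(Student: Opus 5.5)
The plan is a concentration--compactness argument on a minimizing sequence, with $m>m_p^*(a)$ used to push $d_p(a)$ strictly below a reference level that rules out vanishing and dichotomy. I expect the strict-inequality step and the exclusion of dichotomy to be the real work.

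\textbf{Step 1: $d_p(a)$ is finite and minimizing sequences are bounded.} I would bound the kinetic part from below pointwise in Fourier variables:
\begin{equation*}
\sqrt{|k|^2+m^2}-m-v\cdot k\ \ge\ \sqrt{1-|v|^2}\,\sqrt{|k|^2+m^2}-m\ \ge\ m\bigl(\sqrt{1-|v|^2}-1\bigr),
\end{equation*}
using $|v\cdot k|\le |v|\,|k|$ and Cauchy--Schwarz on the vectors $(1,|v|)$ and $(\sqrt{|k|^2+m^2}\,\sqrt{1-|v|^2},\ |k|)$ (I still need to check this rearrangement). Hence the quadratic form controls $\sqrt{1-|v|^2}\,\|(-\Delta)^{1/4}\varphi\|_2^2$ up to a multiple of $a$.

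The positive part of $|\varphi|^2\log|\varphi|^2$ is controlled, through the logarithmic Sobolev inequality (or $|s|^2\log|s|^2\le C_\varepsilon|s|^{2+\varepsilon}$), by a small power of the $\dot H^{1/2}$ norm. The term $\|\varphi\|_p^p$ is controlled by fractional Gagliardo--Nirenberg; since $p<2+\frac2N$, it grows sublinearly in $\|(-\Delta)^{1/4}\varphi\|_2^2$. Together these give coercivity on $\mathcal S_a$: $d_p(a)>-\infty$, and every minimizing sequence is bounded in $H^{1/2}$.

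The negative part of the logarithmic term, $\int|\varphi|^2\log^-|\varphi|^2$, is then bounded as well, because it appears with a favourable sign in $E_p$. This gives boundedness in $W^{\frac12}(\R^N)$.

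\textbf{Step 2: strict upper bound from $m>m_p^*(a)$.} The reference level is $e_\infty(a):=\frac a2\,m(\sqrt{1-|v|^2}-1)$, the infimum of the kinetic symbol times $a/2$. I want to show $d_p(a)<e_\infty(a)$.

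Fix $u\in\mathcal X_a$ with
\begin{equation*}
\frac{p\sqrt{1-|v|^2}}{4}\,\frac{\|\nabla u\|_2^2}{\|u\|_p^p}<m.
\end{equation*}
Take the boosted trial function $\varphi=e^{i\xi\cdot x}u$, where $\xi$ minimizes $\sqrt{|\xi|^2+m^2}-m-v\cdot\xi$; the minimizer is $\xi=mv/\sqrt{1-|v|^2}$ and the minimum equals $m(\sqrt{1-|v|^2}-1)$.

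Since $u$ is real, $|\widehat\varphi|^2=|\widehat u(\cdot-\xi)|^2$. I would expand the symbol around $\xi$ and use concavity of the square root to bound the remainder by a quadratic form in $k$. The aim is
\begin{equation*}
\frac12\int\bar\varphi\bigl(\sqrt{-\Delta+m^2}-m+iv\cdot\nabla\bigr)\varphi\ \le\ e_\infty(a)+\frac{C_v}{m}\|\nabla u\|_2^2,
\end{equation*}
where I expect the constant to match $C_v=\frac{\sqrt{1-|v|^2}}{4}$ (possibly after a rotation/anisotropic choice of $u$). This matching is the point where the precise constant in \eqref{mpstar} must come out.

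The logarithmic contribution $-\frac12\int|u|^2(\log|u|^2-1)$ is $\le0$ because $u\in\mathcal X_a$. Therefore
\begin{equation*}
E_p(\varphi)\le e_\infty(a)+\frac{\sqrt{1-|v|^2}}{4m}\|\nabla u\|_2^2-\frac1p\|u\|_p^p<e_\infty(a).
\end{equation*}

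\textbf{Step 3: compactness.} Take a minimizing sequence bounded in $W^{\frac12}$.

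\emph{Vanishing.} If it vanishes in the sense of Lions, then $\|\varphi_n\|_p\to0$. The logarithmic term satisfies $\liminf\bigl(-\frac12\int|\varphi_n|^2(\log|\varphi_n|^2-1)\bigr)\ge0$, since small values make $|s|^2\log|s|^2\le 0$ and the positive part is controlled by $L^{2+\varepsilon}$ norms tending to zero. Together with the symbol lower bound from Step 1 this gives $\liminf E_p(\varphi_n)\ge e_\infty(a)$, contradicting Step 2.

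\emph{Dichotomy and compactness after translation.} After translation, $\varphi_n\rightharpoonup\varphi\neq0$. To exclude $\|\varphi\|_2^2=b<a$, I would use Brezis--Lieb splitting for each term; for the logarithm this needs the $W^{\frac12}$ convex/nonconvex decomposition $F_1-F_2$. The splitting must then be combined with a strict subadditivity
\begin{equation*}
d_p(a)<d_p(b)+d_p(a-b).
\end{equation*}
For this I would exploit the logarithm's scaling: for $\theta>1$,
\begin{equation*}
E_p(\sqrt\theta\,\varphi)\le \theta E_p(\varphi)-\frac\theta2\log\theta\,\|\varphi\|_2^2,
\end{equation*}
with the $p$-term helping further. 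This gives $d_p(\theta b)<\theta d_p(b)$, and hence strict subadditivity, the logarithm supplying the strictness independently of the sign of $d_p$.

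Once dichotomy is excluded, $\varphi_n\to\varphi$ in $L^2$, hence in $L^p$ by interpolation. Weak lower semicontinuity of the kinetic form and of the convex part of the log term, together with Fatou for the rest, show that $\varphi$ is a minimizer.

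The main obstacle I expect is Step 2: extracting exactly the constant $\frac{\sqrt{1-|v|^2}}4$ from the second-order expansion of the boosted symbol, so that the hypothesis $m>m_p^*(a)$ is precisely what is needed. If the isotropic bound loses a factor, I would try boosting along $v$ and rescaling $u$ anisotropically, in the directions parallel and orthogonal to $v$.
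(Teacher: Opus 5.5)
Your plan is essentially the paper's proof. It has the same five steps: coercivity on $\mathcal S_a$; the strict bound $d_p(a)<-\frac12(1-\sqrt{1-|v|^2})ma$ from the boost $e^{i\xi\cdot x}u$ with $\xi=mv/\sqrt{1-|v|^2}$ and $u\in\mathcal X_a$; exclusion of vanishing against that level; exclusion of dichotomy through $d_p(\theta b)<\theta d_p(b)$, with strictness supplied by the $\frac{\theta\log\theta}{2}$ term (the paper phrases this via $d_p^a(1)$ and Lions' Lemma II.1); and passage to the limit.

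Your worry in Step 2 is unfounded. Set $\lambda_*=m/\sqrt{1-|v|^2}$, so that $|\xi|^2+m^2=\lambda_*^2$. The tangent-line bound for $\sqrt{\cdot}$ at $\lambda_*^2$ is exactly the paper's inequality $\sqrt{-\Delta+m^2}\le\frac{1}{2\lambda_*}(-\Delta+m^2+\lambda_*^2)$. The linear term $k\cdot\xi/\lambda_*=v\cdot k$ cancels exactly, and after the factor $\frac12$ you get precisely $\frac{\sqrt{1-|v|^2}}{4m}\|\nabla u\|_2^2$, with no anisotropic rescaling.

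Two small repairs are needed. In Step 1, the middle inequality $\sqrt{|k|^2+m^2}-m-v\cdot k\ge\sqrt{1-|v|^2}\sqrt{|k|^2+m^2}-m$ is false for $v\neq0$: at $k=\xi$ the left side is $m(\sqrt{1-|v|^2}-1)<0$ while the right side is $0$. So the form does not control $\sqrt{1-|v|^2}\|(-\Delta)^{1/4}\varphi\|_2^2$. Use two separate bounds instead:
the bound $\ge(1-|v|)|k|-m$ gives coercivity;
Cauchy--Schwarz in the form $v\cdot k+\sqrt{1-|v|^2}\,m\le\sqrt{|k|^2+m^2}$ gives the level $m(\sqrt{1-|v|^2}-1)$ used against vanishing.

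In the final step, the $B$-part of the logarithm enters $E_p$ with a minus sign. It therefore must be handled by the strong $L^2\cap L^q$ convergence you already have, as the paper does, not by Fatou.
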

	
	Compared to Theorem \ref{Thm10291145}, our second result is about the existence and nonexistence of minimizers to problem (\ref{eq10271528}) in the case $p=2+\frac{2}{N}$.
	\begin{thm}\label{Thm01142024}
		Let $p=2+\frac{2}{N}$ with $N\ge 2$, then we have that
		\begin{enumerate}[label=(\arabic*),ref=\arabic*]
			\item \label{E} 
			If $a^\ast_v>a>0$ and $m>m_p^*(a)$, then problem (\ref{eq10271528}) has at least a minimizer;
			
			\item \label{NE} 
			If $a\ge a^\ast_v$ and $m>0$, then problem (\ref{eq10271528}) has no minimizers.
		\end{enumerate}

	\end{thm}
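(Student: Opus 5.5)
We treat the two parts separately. Both rest on the sharp Gagliardo--Nirenberg-type inequality \eqref{eq11280935} for the travelling kinetic form, whose optimizer is $Q_v$. In the mass-critical case $p=2+\frac2N$ it reads
\[
\frac1p\|u\|_p^p\le \frac{1}{2 (a^\ast_v)^{1/N}}\,\|u\|_2^{2/N}\int_{\R^N}\bar u\,(\sqrt{-\Delta}+iv\cdot\nabla)u\,dx .
\]
We also use the pointwise symbol bound $\sqrt{|k|^2+m^2}-m\le |k|$. The lower bound $\sqrt{|k|^2+m^2}-m\ge |k|-m$ gives
\[
\frac12\int_{\R^N} \bar u(\sqrt{-\Delta+m^2}-m+iv\cdot\nabla)u \ge \frac12\int_{\R^N} \bar u(\sqrt{-\Delta}+iv\cdot\nabla)u-\frac{m}{2}a .
\]
Since $|v|<1$, the form $\int_{\R^N}\bar u(\sqrt{-\Delta}+iv\cdot\nabla)u$ is comparable to $\|(-\Delta)^{1/4}u\|_2^2$.

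\textbf{Part (1).} The scheme follows Theorem \ref{Thm10291145}. First, $d_p(a)>-\infty$. The kinetic term absorbs the $L^p$ term with coefficient $1-(a/a_v^\ast)^{1/N}>0$. For the logarithmic term we use $s^2\log s^2\le C_\varepsilon |s|^{2+\varepsilon}$ for $|s|\ge1$ with $\varepsilon$ small, together with fractional Gagliardo--Nirenberg, which gives a sublinear power of the kinetic energy. The negative part of $|u|^2\log|u|^2$ is kept on the good side, since it contributes $+\frac12\int_{\R^N}(|u|^2\log|u|^2)^-$ to $E_p$. Hence minimizing sequences are bounded in $W^{\frac12}$.

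Second, we prove $d_p(a)<0$, or more precisely the strict subadditivity $d_p(a)<d_p(b)+d_p(a-b)$. This is where $m>m_p^\ast(a)$ enters. Take $u\in\X_a$ nearly attaining $m_p^\ast(a)$ and use the rescaling $u_t=t^{N/2}u(t\cdot)$ with $t\to0$. Expanding the symbol as $\sqrt{|k|^2+m^2}-m\approx |k|^2/(2m)$ for small frequencies, the kinetic term is about $\frac{t^2}{4m}\|\nabla u\|_2^2$, and the drift term vanishes because $u$ is real. The $L^p$ term scales like $t^{N(p-2)/2}=t^2$ in the critical case. The condition $m>m_p^\ast(a)$ is exactly what makes $\frac{t^2}{4m}\|\nabla u\|^2-\frac{t^2}{p}\|u\|_p^p<0$; I would verify how the factor $\sqrt{1-|v|^2}$ enters, perhaps through a Lorentz-type boost $u\mapsto e^{i\xi\cdot x}u$ that lowers the kinetic-plus-drift energy. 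The log term contributes $-\frac12(a\log t^N-a+\int_{\R^N}|u|^2\log|u|^2)$, and $u\in\X_a$ controls its sign. From negativity of the energy, the scaling $\varphi\mapsto\sqrt\theta\,\varphi$ gives $d_p(\theta a)<\theta d_p(a)$ for $\theta>1$, using $\theta\log\theta$ from the logarithm and $\theta^{p/2}>\theta$ from the power. This yields strict subadditivity.

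Third, we apply concentration-compactness to a minimizing sequence. Vanishing is excluded because it would force $\|u_n\|_p\to0$ and would make the log term nonnegative in the limit, so $\liminf E_p\ge0>d_p(a)$. Dichotomy is excluded by strict subadditivity, after a Brezis--Lieb splitting for the nonlocal operator, which follows from commutator estimates for $\sqrt{-\Delta+m^2}$ as in \cite{FJL07}, and for the log term as in \cite{C83,A17}. After translation we obtain strong $L^2$ convergence, hence $L^q$ convergence for $2\le q<\frac{2N}{N-1}$. Lower semicontinuity of the kinetic part and of the negative log part (Fatou) then gives a minimizer. I expect this step to be the main technical obstacle, namely the splitting of the nonsmooth log functional and the sign control that prevents loss of mass.

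\textbf{Part (2).} For $a>a_v^\ast$, consider the boosted, rescaled profile $\varphi_t=\sqrt{a/a_v^\ast}\,t^{N/2}Q_v(t\cdot)$ with $t\to\infty$. Using $\sqrt{|k|^2+m^2}-m\le|k|$, the energy is at most
\[
\frac{t}{2}\Big[\frac{a}{a_v^\ast}-\Big(\frac{a}{a_v^\ast}\Big)^{p/2}\Big]\int_{\R^N}\bar Q_v(\sqrt{-\Delta}+iv\cdot\nabla)Q_v\,dx-\frac a2 N\log t+O(1)\to-\infty .
\]
Hence $d_p(a)=-\infty$ and no minimizer exists.

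For $a=a_v^\ast$ the bracket vanishes, and the kinetic excess $\le m\|\cdot\|_2^2$-type bounds keep it bounded while $-\frac{a}{2}N\log t\to-\infty$. So again $d_p=-\infty$, or at least the infimum is not attained. If needed, one can argue alternatively: if a minimizer $u$ existed, then $E_p(u_t)$ along the scaling $u_t$ would be strictly decreasing for large $t$ by the sharp inequality and the strict symbol inequality $\sqrt{|k|^2+m^2}-m<|k|$, giving a contradiction.
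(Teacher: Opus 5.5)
\textbf{Verdict: genuine gap.} Your Part (2) is the paper's argument, and the scheme of Part (1) also matches the paper. The gap is in how Part (1) rules out vanishing.

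In Part (2) (rescaled $Q_v$, the bound $\sqrt{|k|^2+m^2}-m\le|k|$, and $-\frac{aN}{2}\log t\to-\infty$) you should add that $Q_v\in W^{\frac12}(\R^N)$. The paper gets this from the decay of $Q_v$; it is needed so that the test functions lie in $\mathcal{S}_a$ and the $O(1)$ term is finite. In Part (1), your subadditivity step is sound and needs no sign information, since $E_p(\sqrt\theta\varphi)\le\theta E_p(\varphi)-\frac{\theta\log\theta}{2}\|\varphi\|_2^2$.

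\textbf{The vanishing step.} You conclude $\liminf E_p\ge0$ from $\|\psi_k\|_p\to0$ and the sign of the log term, which treats the kinetic part as nonnegative. For $v\neq0$ it is not. Its symbol $\sqrt{|k|^2+m^2}-m-v\cdot k$ equals $-(1-\sqrt{1-|v|^2})m<0$ at $k=mv/\sqrt{1-|v|^2}$. What \eqref{APP.C} actually gives under vanishing is only $\liminf E_p(\psi_k)\ge-\frac12(1-\sqrt{1-|v|^2})ma+\frac a2$. This is negative as soon as $m(1-\sqrt{1-|v|^2})>1$, so $d_p(a)<0$ would not give a contradiction.

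\textbf{Your proof of $d_p(a)<0$ also fails.} Along $u_t=t^{N/2}u(t\cdot)$ the logarithmic part of $E_p(u_t)$ equals $-\frac{aN}{2}\log t-\frac12\inte|u|^2(\log|u|^2-1)dx$. This tends to $+\infty$ as $t\to0$ and swamps the kinetic and $L^p$ terms. Moreover $u_t\notin\X_a$ for small $t$, so $u\in\X_a$ controls nothing there. Also, for $p=2+\frac2N$ one has $\|u_t\|_p^p=t\|u\|_p^p$, not $t^2\|u\|_p^p$, so your comparison with $m_p^\ast(a)$ is not scale-matched.

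\textbf{How the paper closes it.} The boost you hint at is the right tool, but it must be applied to a fixed, undilated function, and its purpose is a sharper bound than $d_p(a)<0$. This is Lemma \ref{lem01141059}, proved as Lemma \ref{lem10281516}:
\begin{itemize}
\item take a real $\phi\in\X_a$ and put $\phi_\lambda=e^{i\lambda v\cdot x}\phi$;
\item use $\sqrt{-\Delta+m^2}\le\frac{1}{2\lambda}(-\Delta+m^2+\lambda^2)$ and choose $\lambda_\ast=m/\sqrt{1-|v|^2}$;
\item this gives $E_p(\phi_{\lambda_\ast})\le-\frac12(1-\sqrt{1-|v|^2})ma+\frac{\sqrt{1-|v|^2}}{4m}\|\nabla\phi\|_2^2-\frac1p\|\phi\|_p^p$.
\end{itemize}
The last two terms are negative for a near-optimal $\phi$ exactly when $m>m_p^\ast(a)$; this is where the factor $\sqrt{1-|v|^2}$ comes from. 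Without the boost (real $\phi$, $\lambda=m$) you only reach $d_p(a)<0$, and only for $m>m_p^\ast(a)/\sqrt{1-|v|^2}$. The boosted bound gives $d_p(a)<-\frac12(1-\sqrt{1-|v|^2})ma$, strictly below the vanishing floor.

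\textbf{An alternative repair.} You can also exclude vanishing with no energy comparison at all. Jensen's inequality for the probability measure $a^{-1}|\psi_k|^2dx$ gives $\inte|\psi_k|^2\log|\psi_k|^2dx\le\frac{2a}{q-2}\log\bigl(a^{-1}\|\psi_k\|_q^q\bigr)\to-\infty$ for any $2<q<\frac{2N}{N-1}$. This contradicts the $L^A$ bound on minimizing sequences that you already have. Combined with your unconditional subadditivity, this route would not even use $m>m_p^\ast(a)$.
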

	
	Finally, for the case $p\in\left(2+\frac{2}{N},\frac{2N}{N-1}\right]$, we obtain the following results.
	\begin{thm}\label{Thm1.3}
		Let $m,a>0$, $2+\frac{2}{N}<p\le \frac{2N}{N-1}$ with $N\ge 2$. Then problem (\ref{eq10271528}) admits no minimizer.
	\end{thm}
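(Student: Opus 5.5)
The plan is to show that $d_p(a)=-\infty$ for every $a>0$ when $2+\frac2N<p\le\frac{2N}{N-1}$. Since a minimizer $\varphi\in\mathcal S_a$ would have $E_p(\varphi)$ finite, this rules out minimizers immediately.

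\emph{Scaling family.} Fix a real $u\in \mathcal S_a\cap C_c^\infty(\R^N)$ and set $u_t:=t^{\frac N2}u(t\cdot)$, so that $\|u_t\|_2^2=a$ for all $t>0$. We estimate each term of $E_p(u_t)$ as $t\to+\infty$.
\begin{itemize}
\item Because $u$ is real, $\int \bar u_t\, i(v\cdot\nabla)u_t\,dx=i\int u_t\,v\cdot\nabla u_t\,dx=\frac i2\int v\cdot\nabla(u_t^2)\,dx=0$. Hence the travelling term drops out (in general it is bounded by $|v|$ times the kinetic part, so it is harmless anyway).
\item Using $\sqrt{|k|^2+m^2}-m\le |k|$, we get
\[
\int \bar u_t(\sqrt{-\Delta+m^2}-m)u_t\,dx\le \int |k||\hat u_t|^2\,dk=t\int|k||\hat u|^2\,dk=C_1t .
\]
\item The power term scales as $\|u_t\|_p^p=t^{\frac{N(p-2)}2}\|u\|_p^p$.
\item The logarithmic term is $\inte|u_t|^2(\log|u_t|^2-1)\,dx=a(N\log t-1)+\inte u^2\log u^2\,dx$, computed exactly as in the paper's discussion of $\mathcal X_a$. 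It grows only like $\log t$, and it enters $E_p$ with a minus sign, so it only helps.
\end{itemize}

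\emph{Conclusion.} Combining these,
\[
E_p(u_t)\le \tfrac{C_1}{2}t-\tfrac1p\|u\|_p^p\,t^{\frac{N(p-2)}2}-\tfrac{aN}{2}\log t+C_2 .
\]
Since $p>2+\frac2N$ gives $\frac{N(p-2)}2>1$, the right-hand side tends to $-\infty$ as $t\to+\infty$. Therefore $d_p(a)=-\infty$, and problem \eqref{eq10271528} has no minimizer.

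\emph{Remaining technical point.} The functions $u_t$ must be admissible, i.e.\ lie in $W^{\frac12}(\R^N)$. This holds because compactly supported smooth functions satisfy $|u|^2\log|u|^2\in L^1$. The only real obstacle is checking that the negative power term dominates the linear kinetic growth. This uses only the strict inequality $p>2+\frac2N$ and holds uniformly for every $m>0$ and every $|v|<1$; no Gagliardo–Nirenberg type bound is needed.
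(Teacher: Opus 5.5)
Your proposal is correct and follows the paper's proof: you use the same $L^2$-preserving dilation $u_t=t^{\frac N2}u(t\cdot)$, the bound $\sqrt{-\Delta+m^2}-m\le\sqrt{-\Delta}$ to make the kinetic part grow linearly in $t$, the $t^{\frac{N(p-2)}{2}}$ scaling of the power term with $\frac{N(p-2)}{2}>1$, and the $-\frac{Na}{2}\log t$ contribution of the logarithmic term, which together give $d_p(a)=-\infty$. The only cosmetic difference is that you take $u$ real so that the drift term vanishes, whereas the paper keeps it inside $T_v(u_t)=tT_v(u)$.
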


	\begin{rem}
		As a byproduct, in the process of proving Theorems \ref{Thm10291145}, \ref{Thm01142024}  and \ref{Thm1.3}, we present the following energy estimates for $d_{p}(a)$. Specifically
		\begin{equation*}
			d_{p}(a)
			\left\{ \begin{array}{cll}
				< -\frac{1}{2}\left(1-\sqrt{1-|v|^2}\right )ma, & 2<p<2+\frac{2}{N},~ a>0,  m>m_p^*(a);\\[2mm]
				< -\frac{1}{2}\left(1-\sqrt{1-|v|^2}\right )ma, & p=2+\frac{2}{N},~ a^\ast_v>a>0, m>m_p^*(a);\\[2mm]
				=-\infty, & p=2+\frac{2}{N},~ a\ge a^\ast_v, m>0;\\[2mm]
				=-\infty, & 2+\frac{2}{N}<p\le \frac{2N}{N-1},~ a,m>0.
			\end{array}\right.
		\end{equation*}
	\end{rem}

	As discussed in Theorem \ref{Thm01142024}, we notice that the constant $a^\ast_v$ is a threshold for whether $d_{p}(a)$ is achieved or not.
	
	Inspired by \cite{DPSL26,GZ,HYZ25}, we next address the blow-up behavior of minimizers of $d_{p}(a)$ as $a\nearrow a^\ast_v$.
	\begin{thm}\label{Thm01152021}
		Set $p=2+\frac{2}{N}$ with $N\ge 2$. Consider a sequence $\{a_n\}$ with $a_n\nearrow a^\ast_v$ as $n\to \infty$, and take $m>0$ such that $d_p(a_n)$ is attained by a minimizer $u_{a_n}$ for all $n\ge 1$. Then, up to a subsequence,
		there exists a sequence $\{y_n\}\subset\mathbb{R}^N$ such that
		\begin{equation*}
			\epsilon _{n}^{\frac{N}{2} }u_{a_n}\left(\epsilon_{n}(\cdot+y_{n})\right)\to {(N\|Q_0\|_2^2)}^{-\frac N2}{Q_0\left(\frac{\cdot}{N\|Q_0\|_2^2}\right)}
			\text{~in~} H^{\frac{1}{2}}(\mathbb{R}^N),
		\end{equation*}
		where $Q_0$ optimizes the inequality (\ref{eq11280935}) and satisfies equation (\ref{eq11280944}), and
		$$\epsilon_{n}:=\left(\inte \bar{u}_{a_n}(\sqrt{-\Delta  }+iv\cdot \nabla )u_{a_n} dx\right)^{-1}\to  0.$$
	\end{thm}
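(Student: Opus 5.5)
The plan is the standard blow-up analysis for mass-critical problems, adapted to the operator $\sqrt{-\Delta}+iv\cdot\nabla$. I will use the sharp Gagliardo--Nirenberg-type inequality \eqref{eq11280935}, which in the form I need reads
\[
\|u\|_p^p\le \frac{p}{2\,(a^\ast_v)^{1/N}}\,\|u\|_2^{2/N}\inte \bar u(\sqrt{-\Delta}+iv\cdot\nabla)u\,dx,
\]
with equality at $Q_v$ via \eqref{eq11280944}.

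\textbf{Step 1: upper bound on $d_p(a_n)$.} Take trial functions $\tau^{N/2}Q_v(\tau\cdot)$, rescaled to mass $a_n$, and use $\sqrt{|k|^2+m^2}-m\le |k|$. The kinetic term and the $L^p$ term nearly cancel, and since $a_n<a^\ast_v$ the remainder is of order $\frac{a^\ast_v-a_n}{a^\ast_v}\tau$. The logarithmic term contributes $-\frac{N}{2}a_n\log\tau+O(1)$. Optimizing in $\tau$ gives
\[
d_p(a_n)\le -\tfrac{N}{2}a^\ast_v\log\frac{1}{a^\ast_v-a_n}+C\to-\infty .
\]

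\textbf{Step 2: lower bound and $\epsilon_n\to0$.} I write
\[
\frac12\inte \bar u(\sqrt{-\Delta+m^2}-m)u\ \ge\ \frac12\inte\bar u\sqrt{-\Delta}\,u-\frac m2 a_n .
\]
The log-Sobolev inequality for $H^{1/2}$ then gives $\frac12\inte |u|^2\log|u|^2\le \frac{N}{2}a\log(\epsilon^{-1})+C$, where $\epsilon^{-1}=\inte\bar u(\sqrt{-\Delta}+iv\cdot\nabla)u$. Combined with the GN inequality, this yields
\[
E_p(u_{a_n})\ge \frac12\Bigl(1-\bigl(\tfrac{a_n}{a^\ast_v}\bigr)^{1/N}\Bigr)\epsilon_n^{-1}-\tfrac N2 a_n\log\epsilon_n^{-1}-C .
\]
Since $d_p(a_n)\to-\infty$, the bound forces $\epsilon_n^{-1}\to\infty$. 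Indeed, if $\epsilon_n^{-1}$ stayed bounded, the right-hand side would be bounded below. So $\epsilon_n\to 0$.

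\textbf{Step 3: the $L^p$ term dominates.} From Steps 1 and 2,
\[
\frac12\epsilon_n^{-1}-\frac1p\|u_{a_n}\|_p^p=O(\log\epsilon_n^{-1})+o(\epsilon_n^{-1}).
\]
In particular $\frac1p\|u_{a_n}\|_p^p\,\epsilon_n=\frac12+o(1)$. The logarithmic and mass terms are only $O(\log\epsilon_n^{-1})=o(\epsilon_n^{-1})$, which is what makes the scaling work. Set $w_n=\epsilon_n^{N/2}u_{a_n}(\epsilon_n\cdot)$. Then $\|w_n\|_2^2=a_n$, $\inte\bar w_n(\sqrt{-\Delta}+iv\cdot\nabla)w_n=1$, and $\frac1p\|w_n\|_p^p\to\frac12$. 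So $\{w_n\}$ is an optimizing sequence for the GN inequality with mass tending to $a^\ast_v$.

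\textbf{Step 4: compactness and identification of the limit.} I apply concentration-compactness to $\{w_n\}$. Vanishing is excluded because $\|w_n\|_p$ stays bounded away from zero, and Lions' lemma then applies. Dichotomy is excluded by strict subadditivity of the GN-quotient, since the inequality is homogeneous in the mass exponent $2/N>0$. This gives $y_n$ with $w_n(\cdot+y_n)\to w$ in $L^2$, hence in $L^p$ by interpolation. The limit $w$ has mass $a^\ast_v$ and is an optimizer; lower semicontinuity plus the equality in the constraint upgrade the convergence to $H^{1/2}$.

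\textbf{Identifying $w$.} This is the main obstacle: the limit must be identified with the explicit rescaling of $Q_0$, which involves $v$ only through normalization. I would first try to use the Euler--Lagrange equation of $u_{a_n}$ rescaled. Its Lagrange multiplier satisfies $\lambda_n\epsilon_n\to$ const, again because the log term is lower order, and the $m$-terms vanish after scaling since $\epsilon_n m\to0$. Passing to the limit gives the equation \eqref{eq11280944} for $w$. I would then invoke the uniqueness and characterization of optimizers from \cite[Proposition 2.5]{DPSL26} (relating $Q_v$ to $Q_0$, e.g.\ via the Lorentz-type change of variables) to conclude that
\[
w=(N\|Q_0\|_2^2)^{-N/2}\,Q_0\!\left(\frac{\cdot}{N\|Q_0\|_2^2}\right),
\]
the constant being fixed by the normalizations $\inte\bar w(\sqrt{-\Delta}+iv\cdot\nabla)w=1$ and $\|w\|_2^2=a^\ast_v$. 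If uniqueness is only up to translation or phase, I absorb it into $y_n$ and pass to a subsequence.
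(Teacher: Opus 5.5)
\textbf{The gap is in your identification step.} You read $Q_0$ as a fixed, pre-given profile, apparently the velocity-zero ground state, to be linked to $Q_v$ by a ``Lorentz-type change of variables''. You then plan to match your limit $w$ to it through a uniqueness theorem for optimizers. Neither ingredient is available. \cite[Proposition 2.5]{DPSL26} only guarantees that $a^\ast_v$ is well defined, not that optimizers are unique, and no change of variables reduces the $v$-problem to the $v=0$ one. Worse, under your reading the claim would be false for $v\neq0$, for two reasons. First, your limit solves $(\sqrt{-\Delta}+iv\cdot\nabla)w+\frac{1}{Na^\ast_v}w=|w|^{\frac2N}w$, and no nonzero function that is real up to a constant phase can satisfy this, since the term $iv\cdot\nabla w$ would have to vanish. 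Second, $\|w\|_2^2=a^\ast_v$ is strictly smaller than the critical mass for $v=0$; to see this, test \eqref{eq11280935} with $e^{itv\cdot x}$ times a real radial $v=0$ optimizer, $t>0$ small. So the step you single out as the main obstacle would fail as planned.

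\textbf{The fix.} What you are missing is that $Q_0$ in the statement is just \emph{some} optimizer of the $v$-dependent inequality \eqref{eq11280935} solving the $v$-dependent equation \eqref{eq11280944}, and it is built from the limit. Put $Q_0:=(Na^\ast_v)^{\frac N2}w(Na^\ast_v\,\cdot)$. A scaling computation turns the limit equation into \eqref{eq11280944}. $Q_0$ optimizes \eqref{eq11280935} because $w$ does and the quotient $\|u\|_{2+\frac2N}^{2+\frac2N}/(T_v(u)\|u\|_2^{\frac2N})$ is invariant under $u\mapsto\mu^{\frac N2}u(\mu\,\cdot)$. Finally $\|Q_0\|_2^2=\|w\|_2^2=a^\ast_v$, whence $w=(N\|Q_0\|_2^2)^{-\frac N2}Q_0(\cdot/(N\|Q_0\|_2^2))$. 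This is exactly how the paper concludes, and no uniqueness is involved.

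\textbf{The rest of your argument is sound,} with two deviations from the paper. You control $\inte|u|^2\log|u|^2$ by a fractional log-Sobolev inequality, whereas the paper uses $C_q\|u\|_q^q\le C\,T_v(u)^{N(q-2)/2}$ with $2<q<2+\frac2N$; both give $o(T_v)$. You get compactness by treating $w_n$ as an optimizing sequence for \eqref{eq11280935} and excluding dichotomy via Brezis--Lieb splitting. The paper instead first passes to the limit in the rescaled Euler--Lagrange equation, with $\epsilon_n\lambda_n\to-\frac{1}{Na^\ast_v}$. It then uses the Pohozaev identity $T_v(\omega_0)=\frac{1}{a^\ast_v}\|\omega_0\|_2^2=\frac{N}{N+1}\|\omega_0\|_{2+\frac2N}^{2+\frac2N}$ with \eqref{eq11280935} to force $\|\omega_0\|_2^2=a^\ast_v$. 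Your route avoids Pohozaev. Moreover, since $w$ is an optimizer with $T_v(w)=1$ and $\|w\|_2^2=a^\ast_v$, the limit equation follows from the Euler--Lagrange equation of the quotient alone, without using that of $u_{a_n}$.
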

	
	\begin{rem}\label{rem1.2}
		Observe that the existence of the family of minimizers $\{u_{a_n}\}$ in Theorem \ref{Thm01152021} is guaranteed by Theorem \ref{Thm01142024}-\eqref{E} and Lemma \ref{le:m*}, for suitably large $m$.
		Moreover, as $n\to \infty$, we establish the following energy estimates (see Lemma \ref{lem01190906}):
		$$d_{p}(a_n)\to-\infty,~~
		\epsilon_{n}d_{p}(a_n)\to 0,~~
		\epsilon_{n}\inte|u_{a_n}|^2\log|u_{a_n}|^2dx \to 0 ~\text{ and }~
		\epsilon_{n} \|u_{a_n}\|_{2+\frac{2}{N}}^{2+\frac{2}{N}}\to \frac{N+1}{N}.$$
	\end{rem}

	Finally, we show that the  asymptotic behavior of minimizers to problem (\ref{eq10271528}) in the case $2<p\le2+\frac{2}{N}$ with $N\ge 2$, which can be presented as follows.
	
	\begin{thm}\label{thm14.9}
		Let $\{a_n\}$ be a sequence satisfying
		\[
		\lim_{n\to \infty} a_n = a_0 \in
		\begin{cases}
			(0,+\infty), & \text{ if } 2<p<2+\frac{2}{N},\\
			(0,a_v^\ast), & \text{ if } p=2+\frac{2}{N},
		\end{cases}
		\]
		and suppose $m>0$ is such that $d_p(a_n)$ is attained by a minimizer $u_{a_n}$ for all $n\ge 1$.  
		Then, up to a subsequence, we have that
		$$u_{a_n}\to  {u}_{a_0}\text{~in~} W^{\frac{1}{2}}(\mathbb{R}^N), \text{ as } n\rightarrow\infty,$$
		where $ {u}_{a_0}$ is a minimizer related to $d_p(a_0)$.
	\end{thm}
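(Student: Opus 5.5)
The plan is to treat $\{u_{a_n}\}$ as an almost-minimizing sequence for $d_p(a_0)$ and then reuse the compactness argument behind Theorems \ref{Thm10291145} and \ref{Thm01142024}-\eqref{E}.

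\textbf{Step 1: continuity of $d_p$.} I first show that $d_p(a_n)\to d_p(a_0)$. For the upper bound, take any $\varphi\in\mathcal{S}_{a_0}$ and set $\varphi_n:=\sqrt{a_n/a_0}\,\varphi\in\mathcal{S}_{a_n}$. Then $E_p(\varphi_n)\to E_p(\varphi)$, since each term of $E_p$ is continuous under this scaling; the logarithmic term picks up $\log(a_n/a_0)\|\varphi\|_2^2$, which tends to $0$. This gives $\limsup d_p(a_n)\le d_p(a_0)$.

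For the lower bound I apply the same rescaling to $u_{a_n}$, i.e. consider $\sqrt{a_0/a_n}\,u_{a_n}$. To control the error I need $\{u_{a_n}\}$ bounded in $W^{\frac12}(\R^N)$. The tools are:
\begin{itemize}
\item the operator bound $\sqrt{1-|v|^2}\,\sqrt{-\Delta}\le \sqrt{-\Delta}+iv\cdot\nabla$ in the sense of quadratic forms;
\item the logarithmic Sobolev control of $\int|u|^2\log|u|^2$ from above by $\varepsilon\|(-\Delta)^{1/4}u\|_2^2$ plus terms in $a$;
\item Gagliardo--Nirenberg with the subcritical exponent for $\|u\|_p^p$, or, when $p=2+\frac2N$, the sharp constant with $a_n\le a'<a^\ast_v$.
\end{itemize}
Since $d_p(a_n)$ is bounded above, these give boundedness of the $H^{\frac12}$ norm. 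Boundedness of $\int|u|^2|\log|u|^2|$ then follows from the equation for $E_p$, splitting the logarithm into its positive and negative parts and using the $H^{\frac12}$ bound for the positive part. With these bounds, $d_p(a_0)\le E_p(\sqrt{a_0/a_n}\,u_{a_n})=d_p(a_n)+o(1)$.

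\textbf{Step 2: compactness.} By Step 1, $\{u_{a_n}\}$ rescaled to mass $a_0$ is a minimizing sequence for $d_p(a_0)$. I invoke the concentration-compactness argument from the existence proofs.
\begin{itemize}
\item \emph{Vanishing is excluded.} If it occurred, $\|u\|_p\to0$, and the $\int|u|^2\log|u|^2$ term would be asymptotically nonpositive. Together with the free bound $\frac12\int\bar u(\sqrt{-\Delta+m^2}-m+iv\cdot\nabla)u\ge -\frac12(1-\sqrt{1-|v|^2})ma$, this would give $d_p(a_0)\ge -\frac12(1-\sqrt{1-|v|^2})ma_0$. That contradicts the strict energy estimate in the Remark after Theorem \ref{Thm1.3}, provided $m>m_p^*(a_0)$.
\item \emph{Dichotomy is excluded} by the strict subadditivity $d_p(a_0)<d_p(\alpha)+d_p(a_0-\alpha)$, which the existence proofs establish via the scaling $\theta\mapsto d_p(\theta a)$ and the superquadratic $\log$ plus power terms.
\end{itemize}
This yields translations $y_n$ with $u_{a_n}(\cdot+y_n)\to u_{a_0}$ in $L^2$. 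By interpolation the convergence also holds in $L^p$, and weak lower semicontinuity gives $E_p(u_{a_0})\le d_p(a_0)$, so $u_{a_0}$ is a minimizer.

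\textbf{Step 3: strong convergence in $W^{\frac12}$.} Since $E_p(u_{a_n})\to E_p(u_{a_0})$ and each lower semicontinuous piece can only drop in the limit, each piece must converge. In particular, the quadratic form converges, and it is equivalent to the $H^{\frac12}$ norm, so we get $H^{\frac12}$ convergence. Also $\int|u|^2\log|u|^2$ converges with positive and negative parts separately, and Brezis--Lieb type arguments then give convergence in the Orlicz part of the $W^{\frac12}$ norm.

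The main obstacle is the hypothesis $m>m_p^*(a_0)$, which the strict inequality in Step 2 needs. The statement only assumes minimizers exist for each $a_n$. I would first try to derive the strict estimate at $a_0$ from the minimizers $u_{a_n}$ themselves together with the local uniform bound in Lemma \ref{le:m*}. Failing that, I would use the translated weak limit directly: I would show it is nonzero by applying Lions' lemma to $\|u_{a_n}\|_p$, which I expect stays bounded away from $0$ because $d_p(a_n)$ is strictly below the vanishing level.
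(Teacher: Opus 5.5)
Your Steps 1--3 follow the paper's proof. The paper proves $\limsup d_p(a_n)\le d_p(a_0)$ and $\liminf d_p(a_n)\ge d_p(a_0)$ by mass rescaling and uniform $W^{\frac12}$ bounds. It then reruns the concentration--compactness argument of Theorem \ref{Thm10291145}, and gets the Orlicz part from Lemma \ref{lem10261928}(ii) (a.e.\ convergence plus convergence of $\inte A(|u_{a_n}|)dx$).

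The obstacle you flag is a real gap for the statement as written, and the paper's proof does not address it either. It reruns Case 1 of that argument, which rests on Lemma \ref{lem10281516} and so tacitly needs $m>m_p^*(a_0)$; Remark \ref{rem}, with $m$ large via Lemma \ref{le:m*}, is what it has in mind. Neither of your fixes closes the gap:
\begin{itemize}
\item Knowing that $d_p(a_n)$ is attained tells you nothing about $m$ versus $m_p^*(a_0)$. Moreover, the estimate $d_p(a_0)<-\frac12(1-\sqrt{1-|v|^2})ma_0$ can be false. The lower bound of Proposition \ref{prop01152155}, applied to $E_p^{a}$ on $\mathcal{S}_1$, gives $d_p^{a}(1)\ge -C-\frac m2-\frac12\log a$ for $a\le 1$. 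Hence $d_p(a_0)>0$ for $a_0$ small by \eqref{eq10282013}, while minimizers still exist there (by the argument below).
\item Your second fix assumes $d_p(a_n)$ stays strictly below the vanishing level. That is the same unproved estimate, so the fix is circular.
\end{itemize}

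The missing idea is that the logarithmic term itself excludes vanishing, for every $m>0$. It does so through the bound $\inte A(|u_{a_n}|)dx\le C$ that your Step 1 already provides.
\begin{itemize}
\item Suppose $\sup_{y}\int_{B_R(y)}|u_{a_n}|^2dx\to0$. Lemma \ref{lem10301100} gives $\|u_{a_n}\|_q\to0$ for a fixed $q\in(2,\frac{2N}{N-1})$.
\item For $\delta\in(0,e^{-3})$ this gives $\int_{\{|u_{a_n}|\ge\delta\}}|u_{a_n}|^2dx\le\delta^{2-q}\|u_{a_n}\|_q^q\to0$. Also $A(s)=s^2\log s^{-2}\ge s^2\log\delta^{-2}$ for $s<\delta$.
\item Hence $C\ge\liminf_n\inte A(|u_{a_n}|)dx\ge a_0\log\delta^{-2}$ for every such $\delta$, which is absurd.
\end{itemize}
So a suitably translated weak limit is nonzero, with no condition on $m$.

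Dichotomy is likewise excluded for every $m>0$. For $u\in\mathcal{S}_b$ and $\theta>1$, one has $E_p(\sqrt\theta u)\le\theta E_p(u)-\frac{\theta b\log\theta}{2}$, hence $d_p(\theta b)<\theta d_p(b)$. The proof of \eqref{eq10291042} in Lemma \ref{lem10281714} uses only this monotonicity, never the sign of $d_p$, so strict subadditivity holds for all $m>0$. The same applies for $p=2+\frac2N$ with $a_0<a^\ast_v$. With these two facts your Steps 2--3 go through as written.

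Finally, what you actually prove is $u_{a_n}(\cdot+y_n)\to u_{a_0}$, and that is the correct conclusion. The translations cannot be dropped: $E_p$ is translation invariant, so $u_{a_n}(\cdot-z_n)$ with $|z_n|\to\infty$ are also minimizers. The paper's proof produces the same translations, through the $y_k$ of Theorem \ref{Thm10291145}, even though its statement omits them.
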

	
	\begin{rem}\label{rem}
		Observations similar to those in Remark \ref{rem1.2} also apply to Theorem \ref{thm14.9}.
	\end{rem}
	
	The paper is organized as follows. In Section \ref{sec2}, we provide some preliminary results which are often used in the sequel.
	In Section \ref{sec3}, a full classification concerning the existence and nonexistence of minimizers to problem \eqref{eq10271528}  is established.
	Based on these conclusions, in Section \ref{sec5}, we analyse the precise blow-up behavior of minimizers of $d_p(a)$ as $a\nearrow a^\ast_v$.
	In Section \ref{sec6}, we further study the limiting profiles of minimizers of $d_p(a)$ as $a$ converges to some $a_0$.

	For simplicity of notations, we often use the abbreviations $L^p(\R^N)$ with norm $\|u\|_p=\|u\|_{L^p(\R^N)}$.
	Throughout this paper, the letters $C$, $C_i,~i=1,2,\ldots$ and so on will denote different positive constants from line to line.

	\section{Preliminaries}\label{sec2}
	For the readers' convenience, in this section, we briefly recall some basic definitions and important properties.

	The fractional Sobolev space $ H^{\frac{1}{2}}(\mathbb{R}^N)$ is defined by
	$$H^{\frac{1}{2}}(\mathbb{R}^N):=\left\{u\in L^2(\mathbb{R}^N): (-\Delta)^\frac{1}{4}u\in L^2(\mathbb{R}^N)\right\},$$
	and  endowed  with the norm
	$$\|u\|_{H^{\frac{1}{2}}(\mathbb{R}^N)}:=\left(\|u\|_2^2+\int_{\mathbb{R}^N}|k||\hat{u}(k)|^2dk\right)^\frac{1}{2}.$$
	In addition, for any velocity $v\in\R^N$ with  $|v|<1$, we define the quadratic form 
	$$T_v:\dot{H}^{\frac{1}{2} }(\R^N)\to \R,~~ u\mapsto T_v(u):=\inte \bar{u}(\sqrt{-\Delta  }+iv\cdot \nabla )u dx,$$
	where ${\dot{H}^{\frac{1}{2} }(\R^N)}$ denotes the usual Sobolev space equipped with the norm
	$$\|u\|_{\dot{H}^{\frac{1}{2} }(\R^N)}:=\| (-\Delta)^\frac{1}{4}u\|_{2}.$$
	Observe that 
	$$T_v(u)=\inte \bar{u}(\sqrt{-\Delta  }+iv\cdot \nabla )u dx=\inte (|k|-v\cdot k)|\hat{u}|^2dk.$$
	Clearly, we have the bounds
	\begin{equation}\label{eq10280938}
		(1-|v|)\|u\|_{\dot{H}^{\frac{1}{2} }(\R^N)}^2\leq T_v(u)\leq (1+|v|)\|u\|_{\dot{H}^{\frac{1}{2}}(\R^N)}^2,
	\end{equation}
	and so $\sqrt{T_v(u)}$ is a Hilbertian norm equivalent to the standard norm $\|u\|_{\dot{H}^{\frac{1}{2}}(\R^N)}$ since $|v|<1$.\\
	Let us consider also 
	\begin{equation*}
		\mathcal{T}_{m,v}(u):=\inte \bar{u}(\sqrt{-\Delta+m^2}+iv\cdot \nabla )udx=\inte (\sqrt{|k|^2+m^2}-v\cdot k)|\hat{u}|^2dk,  \text{ for each } u\in H^\frac{1}{2}(\mathbb{R}^N).
	\end{equation*}
	We have that $\sqrt{\mathcal{T}_{m,v}(u)}$ is a Hilbertian norm equivalent to $\|u\|_{{H}^{\frac{1}{2}}(\R^N)}$ (see \cite[Lemma A.4]{FJL07}) and, moreover, since
	\[
	\sqrt{|k|^2+m^2}-v\cdot k\ge \sqrt{1-|v|^2}m,\qquad\text{for all }k\in \R^N,
	\]
	we get
	\begin{equation}\label{APP.C}
		\mathcal{T}_{m,v}(u)\ge  \sqrt{1-|v|^2}m\|u\|_2^2, \qquad\text{for all }u\in H^{\frac12}(\R^N).
	\end{equation}
	
	A tool we will use is the following classical Lions lemma.
	
	\begin{lem}\label{lem10301100}(\cite[Lemma 2.2]{FQT12})
		Let $N\ge2$ and $R>0$. Assume that $\{u_n\}$ is a bounded sequence in $H^\frac{1}{2}(\R^N)$ and satisfies
		$$\lim_{n \to \infty}\sup _{y\in \mathbb{R}^N}\int_{B_R(y)}|u _{n} |^2dx =0, $$
		then
		$u_n\to 0$ in $L^q(\R^N)$ for any $2< q<\frac{2N}{N-1}.$
	\end{lem}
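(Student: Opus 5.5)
The plan is the standard localization argument for the nonlocal operator $\sqrt{-\Delta}$, combined with the sharp fractional Gagliardo--Nirenberg inequality. Fix $q\in(2,\frac{2N}{N-1})$. By the Sobolev embedding $H^{\frac12}(\R^N)\hookrightarrow L^{r}(\R^N)$ for $2\le r\le \frac{2N}{N-1}$ and the boundedness of $\{u_n\}$, the sequence $\{u_n\}$ is bounded in $L^{\frac{2N}{N-1}}(\R^N)$. It therefore suffices, by interpolation between $L^2$ and $L^{\frac{2N}{N-1}}$, to show $u_n\to0$ in $L^{q_0}(\R^N)$ for a single convenient exponent $q_0\in(2,\frac{2N}{N-1})$. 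The natural choice is $q_0=2+\frac2N$, since it makes the local interpolation exponents come out right.

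\emph{Local estimate.} Cover $\R^N$ by the balls $B_R(y)$ with $y\in \frac{R}{\sqrt N}\mathbb{Z}^N$. This covering has bounded overlap, with multiplicity depending only on $N$. On each ball we use the local Gagliardo--Nirenberg inequality
\begin{equation*}
\|u\|_{L^{q_0}(B_R(y))}^{q_0}\le C\,\|u\|_{L^2(B_R(y))}^{q_0-2}\,\|u\|_{H^{\frac12}(B_R(y))}^{2},
\end{equation*}
with $C=C(N,R)$. Here $\|u\|_{H^{\frac12}(B)}^2=\|u\|_{L^2(B)}^2+\int_B\int_B\frac{|u(x)-u(z)|^2}{|x-z|^{N+1}}\,dx\,dz$ is the Gagliardo norm on the ball. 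The inequality follows from the $H^{\frac12}$ extension operator on a ball and the global fractional Gagliardo--Nirenberg inequality $\|w\|_{q_0}^{q_0}\le C\|w\|_2^{q_0-2}\|w\|_{\dot H^{\frac12}}^{N(q_0-2)}$. With $q_0=2+\frac2N$ we get $N(q_0-2)=2$, so the power of the seminorm is exactly $2$. This linearity in the energy is the key point.

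\emph{Summation.} Summing the local estimate over the lattice gives
\begin{equation*}
\|u_n\|_{q_0}^{q_0}\le C\Big(\sup_{y}\|u_n\|_{L^2(B_R(y))}\Big)^{q_0-2}\sum_y\|u_n\|^2_{H^{\frac12}(B_R(y))}.
\end{equation*}
The sum is at most $C\|u_n\|_{H^{\frac12}(\R^N)}^2$, because the local double integrals over $B_R(y)\times B_R(y)$ add up to at most the multiplicity constant times the full Gagliardo seminorm on $\R^N\times\R^N$. The full seminorm is in turn equivalent to $\int|k||\hat u|^2\,dk$. The supremum is taken over lattice points only, which is dominated by the supremum over all $y$; by hypothesis that supremum tends to $0$. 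Hence $u_n\to0$ in $L^{q_0}(\R^N)$.

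\emph{Interpolation.} For general $q\in(2,\frac{2N}{N-1})$, interpolate between $L^2$ and $L^{q_0}$ when $q<q_0$, and between $L^{q_0}$ and $L^{\frac{2N}{N-1}}$ when $q>q_0$. In both cases the other norm is bounded, so $u_n\to0$ in $L^q(\R^N)$.

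The main obstacle is the local fractional inequality together with the summation. The operator $\sqrt{-\Delta}$ is nonlocal, so one cannot simply restrict the quadratic form to a ball. Working with the Gagliardo double-integral representation of the $\dot H^{\frac12}$ seminorm resolves this. Restricting the double integral to $B\times B$ only decreases it, the extension theorem for $H^{\frac12}$ on a ball provides the local inequality, and the bounded overlap gives the summation. A cutoff $\chi(\cdot-y)$ could be used instead, but the commutator terms are messier, so I would use the Gagliardo route.
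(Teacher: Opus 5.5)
Your argument is correct. The paper gives no proof and only cites the reference, and your argument is the classical Lions covering argument behind that result: on each ball you interpolate at $q_0=2+\frac2N$, which is exactly the exponent at which the local $H^{\frac12}$ norm enters squared; you then sum using the bounded overlap of the Gagliardo double integrals, and finally interpolate against the bounded $L^2$ and $L^{\frac{2N}{N-1}}$ norms.

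One small precision concerns the local inequality. It is most cleanly obtained from H\"older interpolation between $L^2(B_R(y))$ and $L^{\frac{2N}{N-1}}(B_R(y))$, plus the embedding $H^{\frac12}(B_R(y))\hookrightarrow L^{\frac{2N}{N-1}}(B_R(y))$. If you go through an extension operator $E$ instead, $E$ must also be bounded from $L^2(B_R(y))$ to $L^2(\R^N)$, which holds for the reflection-type extension. Otherwise $\|Eu\|_2$ is only controlled by the full $H^{\frac12}(B_R(y))$ norm, and the small factor $\|u\|_{L^2(B_R(y))}^{q_0-2}$ is lost.
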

	
	Then we introduce the following Gagliardo-Nirenberg type inequality.
	\begin{lem}\label{lem10272157}(\cite[Proposition 3.1]{BGLV19} and \cite[Remark 2.3]{DPSL26})
		Let $2<q<\frac{2N}{N-1}$ with $N\ge2$ and $v\in \mathbb{R}^N$ with $|v|<1$. Then there exists a sharp constant $$\widetilde{C}_q=C_{v,N,q}:=\frac{q}{q-N(q-2)}\left[\left(\frac{q-N(q-2)}{N(q-2)}\right)^N\frac{1}{\|U_v\|_2^2}\right]^{\frac{q-2}{2}}>0,$$ 
		such that,  for all $u\in H^{\frac{1}{2} }(\mathbb{R}^N)$,
		\begin{equation}\label{eq10272204}
			\begin{aligned}
				\|u\|_q^{q}
				&\le \widetilde{C}_q\left(T_v(u)\right)^{\frac{N(q-2)}{2} }\|u\|_2^{q-N(q-2)},
			\end{aligned}
		\end{equation}
		where $U_v\in H^{\frac{1}{2} }(\mathbb{R}^N)\setminus\{0\}$ is an optimizer  for inequality \eqref{eq10272204} and satisfies
		\begin{equation}\label{eq01152114}
			(\sqrt{-\Delta  }+iv\cdot \nabla ) u  +u  =|u|^{q-2}u.
		\end{equation}
	\end{lem}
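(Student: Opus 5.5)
The inequality is obtained by scaling: we reduce it to an unconstrained minimization problem whose minimizer is the ground state $U_v$ of \eqref{eq01152114}. The first step is to pass to the Weinstein-type functional
\[
J(u):=\frac{\left(T_v(u)\right)^{\frac{N(q-2)}{2}}\|u\|_2^{q-N(q-2)}}{\|u\|_q^q},\qquad u\in H^{\frac12}(\R^N)\setminus\{0\},
\]
and show that $\inf J>0$. This follows from the fractional Gagliardo--Nirenberg inequality $\|u\|_q^q\le C\|u\|_{\dot H^{1/2}}^{N(q-2)}\|u\|_2^{q-N(q-2)}$, which holds because $2<q<\frac{2N}{N-1}$ gives $0<\frac{N(q-2)}{2}<1$, combined with the lower bound $T_v(u)\ge(1-|v|)\|u\|_{\dot H^{1/2}}^2$ from \eqref{eq10280938}. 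Both $J$ and $T_v$ are invariant under the scalings $u\mapsto \mu u(\nu\cdot)$, since $T_v$ is homogeneous of degree $1$ in the Fourier variable and $|k|-v\cdot k$ is $1$-homogeneous. Using these scalings we may normalize a minimizing sequence so that $\|u_n\|_2=1$ and $T_v(u_n)=1$.

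The second step is compactness of minimizing sequences. With the normalization above, $\|u_n\|_q^q\to 1/\inf J>0$. By Lemma~\ref{lem10301100}, vanishing would force $\|u_n\|_q\to0$, so vanishing is excluded. Hence, after a translation, $u_n\rightharpoonup u\neq0$ in $H^{\frac12}$. Dichotomy is ruled out by a Brezis--Lieb splitting of $\|\cdot\|_2^2$, $T_v$ (a quadratic form, so it splits exactly in the weak limit) and $\|\cdot\|_q^q$, together with the strict inequality
\[
(\alpha+\beta)^{s}(\gamma+\delta)^{t}\ge \ldots
\]
which comes from $\frac{N(q-2)}{2}+\frac{q-N(q-2)}{2}=\frac q2>1$, i.e.\ superadditivity. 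It follows that $u$ is a minimizer. I expect this step to be the main obstacle: one has to handle the splitting carefully so as to derive the strict subadditivity contradiction for the product of powers. An alternative is to cite \cite[Proposition 3.1]{BGLV19} directly, which already gives existence of an optimizer.

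The third step identifies the constant. Write the Euler--Lagrange equation of $J$ at the minimizer $u$:
\[
\frac{N(q-2)}{2}\frac{(\sqrt{-\Delta}+iv\cdot\nabla)u}{T_v(u)}+\frac{q-N(q-2)}{2}\frac{u}{\|u\|_2^2}-\frac{q}{2}\frac{|u|^{q-2}u}{\|u\|_q^q}=0.
\]
Rescaling $U_v=\mu u(\nu\cdot)$ with suitable $\mu,\nu$ then yields a solution of \eqref{eq01152114}. For this $U_v$, testing \eqref{eq01152114} against $\bar U_v$ gives $T_v+\|U_v\|_2^2=\|U_v\|_q^q$. The Pohozaev identity, derived from the Euler--Lagrange equation or equivalently from the stationarity of $J$ along the dilation $U_v(\nu\cdot)$, gives $\frac{N(q-2)}{2}\,\|U_v\|_2^2\cdot\ldots$; concretely, it gives $T_v(U_v)=\frac{N(q-2)}{q-N(q-2)}\|U_v\|_2^2$ and $\|U_v\|_q^q=\frac{q}{q-N(q-2)}\|U_v\|_2^2$. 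Substituting these into $\widetilde C_q=1/J(U_v)$ gives
\[
\widetilde C_q=\frac{q}{q-N(q-2)}\left[\left(\frac{q-N(q-2)}{N(q-2)}\right)^{N}\frac{1}{\|U_v\|_2^2}\right]^{\frac{q-2}{2}},
\]
after collecting the exponents: $\|U_v\|_2^2$ appears with power $\frac q2-\frac{N(q-2)}{2}$ in the numerator of $J$ and power $1$ in the denominator. This is the claimed value.
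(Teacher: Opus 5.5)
Your outline is correct. It is the standard Weinstein-type argument behind the results the paper quotes without proof: \cite[Proposition 3.1]{BGLV19} for an optimizer solving \eqref{eq01152114}, and the Nehari--Pohozaev computation of \cite[Remark 2.3]{DPSL26} for the value of $\widetilde C_q$. Your relations $T_v(U_v)=\frac{N(q-2)}{q-N(q-2)}\|U_v\|_2^2$ and $\|U_v\|_q^q=\frac{q}{q-N(q-2)}\|U_v\|_2^2$, and the resulting constant, check out.

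Fix two slips. First, the fractional Gagliardo--Nirenberg inequality holds because $q<\frac{2N}{N-1}$ gives $0<\frac{N(q-2)}{q}<1$ (i.e.\ $q-N(q-2)>0$). It does not give $\frac{N(q-2)}{2}<1$, which is false for $2+\frac2N<q<\frac{2N}{N-1}$. Second, $J$ (unlike $T_v$) is exactly invariant under $u\mapsto\mu u(\nu\cdot)$, so its stationarity along dilations says nothing. The Pohozaev relation must come from the action $\frac12T_v(u)+\frac12\|u\|_2^2-\frac1q\|u\|_q^q$, as in \cite[Lemma A.3]{BGLV19}. Alternatively, for the $U_v$ you construct, both relations can be read off from the rescaling parameters $\nu=\frac{N(q-2)\|u\|_2^2}{(q-N(q-2))T_v(u)}$ and $\mu^{q-2}=\frac{q\|u\|_2^2}{(q-N(q-2))\|u\|_q^q}$.
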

	
	By \cite[Lemma A.4]{BGLV19}, we further have
	\begin{lem}\label{lem2.5}
		Let $2<q<\frac{2N}{N-1}$ with $N\ge2$, and $v\in \mathbb{R}^N$ with $|v|<1$. Suppose that $U_v\in H^{\frac{1}{2} }(\mathbb{R}^N)$ solves equation \eqref{eq01152114}. 
		Then $U_v\in H^1(\mathbb{R}^N)\cap C_0(\mathbb{R}^N)$ and we have the decay estimate
		$$|U_v(x)|+|\nabla U_v(x)|\le \frac{C}{|x|^{N+1}}$$
		with some constant $C>0$. In particular,  we also have that $U_v\in L^\tau(\R^N)$, for all $\tau\in [1,+\infty]$, and $x\cdot\nabla U_v\in L^2(\mathbb{R}^N)$.
	\end{lem}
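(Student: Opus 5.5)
Lemma \ref{lem2.5} is quoted from \cite[Lemma A.4]{BGLV19}, so a proof here only needs to indicate how the statements follow from equation \eqref{eq01152114}. The plan has three stages: bootstrap regularity, derive decay from the kernel of the resolvent, then read off the integrability consequences.

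\emph{Regularity.} In Fourier variables, \eqref{eq01152114} reads
\[
\hat U_v(k)=\frac{\widehat{|U_v|^{q-2}U_v}(k)}{|k|-v\cdot k+1}.
\]
Since $|v|<1$, the symbol satisfies $|k|-v\cdot k+1\ge (1-|v|)|k|+1$. Hence the resolvent $(\sqrt{-\Delta}+iv\cdot\nabla+1)^{-1}$ gains one full derivative, uniformly comparable to $(1+|k|)^{-1}$. Start from $U_v\in H^{1/2}\subset L^r$ for $2\le r\le \frac{2N}{N-1}$, so that $|U_v|^{q-2}U_v\in L^{r/(q-1)}$. Because $q<\frac{2N}{N-1}$, the nonlinearity is strictly subcritical. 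Iterating $L^s\to W^{1,s}$ bounds for the resolvent (a Mikhlin-multiplier argument) together with Sobolev embedding raises integrability at each step. After finitely many steps this gives $U_v\in L^\infty$, then $U_v\in W^{1,s}$ for all $s\in[2,\infty)$, and in particular $U_v\in H^1\cap C_0$ by Morrey's embedding. Continuity and vanishing at infinity follow from $W^{1,s}$ with $s>N$.

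\emph{Decay.} This is the main obstacle. Write $U_v=G_v*(|U_v|^{q-2}U_v)$, where $G_v$ is the kernel with symbol $(|k|-v\cdot k+1)^{-1}$. The symbol is smooth away from $k=0$ but only Lipschitz at the origin, because of $|k|$. Its singularity there behaves like $1-|k|+v\cdot k+O(|k|^2)$. The homogeneous degree-one part $|k|$ produces a tail $\sim c|x|^{-N-1}$, while the smooth part $v\cdot k$ decays rapidly. So I would show $|G_v(x)|+|\nabla G_v(x)|\lesssim |x|^{-N-1}$ for $|x|\ge1$ and $G_v\in L^1$. The tool is a dyadic decomposition of the symbol with integration by parts $N+1$ times, as for the Bessel-type kernel of $\sqrt{-\Delta}+1$. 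Then I run a standard comparison and bootstrap argument. Since $|U_v|^{q-1}$ with $q>2$ decays faster than $U_v$, convolving $G_v$ against $|U_v|^{q-1}$ improves the decay exponent of $U_v$ step by step until it saturates at $N+1$. The same argument applied to $\nabla U_v=\nabla G_v*(\cdots)$ gives the bound for the gradient.

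\emph{Consequences.} Since $U_v$ is bounded and $|U_v|\le C|x|^{-N-1}$ is integrable at infinity, we get $U_v\in L^\tau$ for all $\tau\in[1,\infty]$. Finally, $|x\cdot\nabla U_v|\le C|x|^{-N}$ at infinity, which is square integrable there, and $x\cdot\nabla U_v$ is locally in $L^2$ because $U_v\in H^1$. Hence $x\cdot\nabla U_v\in L^2(\R^N)$.
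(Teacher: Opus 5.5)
The paper gives no proof of this lemma; it simply quotes \cite[Lemma A.4]{BGLV19}. Your outline is the natural reconstruction: a Mikhlin/Sobolev bootstrap for the resolvent, kernel bounds for $(\sqrt{-\Delta}+iv\cdot\nabla+1)^{-1}$, a decay bootstrap, then the integrability consequences. The regularity stage and the final deductions are correct. The decay stage, however, has two genuine gaps.

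\emph{The exponent bootstrap cannot start.} At that point you only know $U_v\in C_0\cap L^2\cap L^\infty$, which is decay with no rate. The iteration $\beta\mapsto\min\{N+1,(q-1)\beta\}$ has $\beta=0$ as a fixed point. The phrase ``$|U_v|^{q-1}$ decays faster than $U_v$'' only says $|U_v|^{q-1}=o(|U_v|)$, which produces no power. The missing idea is an absorption step that turns the qualitative smallness of $|U_v|^{q-2}$ at infinity into a first polynomial rate. For instance, set $M(R):=\sup_{|x|\ge R}|U_v(x)|$ and split $|U_v(x)|\le\int|G_v(x-y)|\,|U_v(y)|^{q-1}dy$ into two regions:
\begin{itemize}
\item on $|y|<|x|/2$ you have $|G_v(x-y)|\lesssim|x|^{-N-1}$, and H\"older with $U_v\in L^2\cap L^\infty$ applies;
\item on $|y|\ge|x|/2$ you use $\|G_v\|_{L^1}$ and the supremum of $|U_v|^{q-1}$.
\end{itemize}
This gives, for $R\ge2$ and some $\gamma>0$,
\[
M(R)\le C R^{-\gamma}+\|G_v\|_{L^1}\,M(R/2)^{q-2}\,M(R/2).
\]
Since $M(R/2)^{q-2}\to0$, iterating along $R=2^kR_0$ yields $M(R)\lesssim R^{-\gamma'}$ for some $\gamma'>0$. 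Only then does your iteration run up to $N+1$. Equivalently, you can use weighted sup-norms with truncated weights $\langle x\rangle^{\beta}$, $\beta\le N+1$, and absorb the small term.

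\emph{The gradient step does not carry over.} The claim that ``the same argument applied to $\nabla U_v=\nabla G_v*(\cdots)$'' works fails because $\nabla G_v$ is not locally integrable. The symbol of $G_v$ has order $-1$, so to leading order at the origin $G_v$ is homogeneous of degree $-(N-1)$. Hence $\nabla G_v$ is homogeneous of degree $-N$ there, a Calder\'on--Zygmund singularity, while the absorption above relies on $\|G_v\|_{L^1}$. In addition, a pointwise bound on $\nabla U_v$ needs $\nabla U_v$ bounded and continuous, which $U_v\in W^{1,s}$ with $s<\infty$ does not provide.

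You can fix this with one more regularity step and by putting the derivative on the nonlinearity:
\begin{itemize}
\item $z\mapsto|z|^{q-2}z$ is $C^1$ for $q>2$, with differential bounded by $(q-1)|z|^{q-2}$, so $|U_v|^{q-2}U_v\in W^{1,s}$.
\item Hence $U_v\in W^{2,s}\subset C^{1,\alpha}$ for $s>N$.
\item Then $\nabla U_v=G_v*\nabla(|U_v|^{q-2}U_v)$, with $|\nabla(|U_v|^{q-2}U_v)|\le(q-1)|U_v|^{q-2}|\nabla U_v|\lesssim\langle x\rangle^{-(q-2)(N+1)}$.
\item Bootstrapping with the integrable kernel $G_v$ gives $|\nabla U_v(x)|\lesssim\langle x\rangle^{-N-1}$.
\end{itemize}

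\emph{A minor point on the kernel.} Integrating by parts $N+1$ times on dyadic pieces only gives $|x|^{-N-1}\log|x|$; you need $N+2$ times. More simply, use subordination:
\[
G_v(x)=c_N\int_0^\infty e^{-t}t\,\bigl(t^2(1-|v|^2)+|x|^2-2it\,v\cdot x\bigr)^{-\frac{N+1}{2}}dt.
\]
This gives $|G_v|\le(1-|v|^2)^{-\frac{N+1}{2}}G_0$, where $G_0>0$ is the kernel of $(\sqrt{-\Delta}+1)^{-1}$. That yields $G_v\in L^1$ and $|G_v(x)|\lesssim|x|^{-N-1}$ at once.
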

	
	In particular, if $q=2+\frac{2}{N}$ in Lemma \ref{lem10272157}, then the following results hold:
	\begin{lem}(\cite[Lemma 2.3]{LZW})
		Assume that $N\ge2$ and $v\in \mathbb{R}^N$ with $|v|<1$. Then for any $u\in H^{\frac{1}{2} }(\mathbb{R}^N)$, one has
		\begin{equation}\label{eq11280935}
			\begin{aligned}
				\|u\|_{2+\frac{2}{N}}^{2+\frac{2}{N}}
				&\le \frac{N+1}{N\|Q_v\|_2^{\frac{2}{N}}}T_v(u)\|u\|_2^{\frac{2}{N}}= \frac{N+1}{N{(a_v^*)}^{\frac{1}{N}}}T_v(u)\|u\|_2^{\frac{2}{N}},
			\end{aligned}
		\end{equation}
		where $a_v^*$ is defined in \eqref{eq01172054}. 
		Here, the equality holds if $u=Q_v$, and $Q_v\in H^{\frac{1}{2} }(\mathbb{R}^N)\setminus\{0\}$ is a solution of the equation
		\begin{equation}\label{eq11280944}
			(\sqrt{-\Delta  }+iv\cdot \nabla ) u  +u  =|u|^{\frac{2}{N}}u.
		\end{equation}
	\end{lem}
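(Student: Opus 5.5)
The inequality \eqref{eq11280935} is the special case $q=2+\frac{2}{N}$ of Lemma \ref{lem10272157}, so the plan is to specialize that lemma and then recompute its sharp constant in terms of the mass of the optimizer. With $q=2+\frac2N$ we have $N(q-2)=2$, hence $\frac{N(q-2)}{2}=1$ and $q-N(q-2)=\frac{2}{N}$. Lemma \ref{lem10272157} therefore gives
\[
\|u\|_q^q\le \widetilde C_q\, T_v(u)\,\|u\|_2^{\frac2N},
\]
and the mass-critical exponent makes this inequality invariant under the $L^2$-preserving dilations $u\mapsto t^{\frac N2}u(t\cdot)$. It then only remains to identify $\widetilde C_q$.

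Inserting $q-N(q-2)=\frac2N$ and $N(q-2)=2$ into the formula for $\widetilde C_q$ gives
\[
\widetilde C_q=\frac{q}{2/N}\Big[\Big(\tfrac{1}{N}\Big)^N\tfrac{1}{\|U_v\|_2^2}\Big]^{\frac1N}=\frac{N+1}{N}\cdot\frac{1}{\|U_v\|_2^{2/N}},
\]
using $\frac{qN}{2}=N+1$. Thus the constant is $\frac{N+1}{N\|Q_v\|_2^{2/N}}$ with $Q_v:=U_v$, the optimizer solving \eqref{eq01152114} at this exponent, which is exactly \eqref{eq11280944}. By the definition \eqref{eq01172054}, $\|Q_v\|_2^{2/N}=(a_v^*)^{1/N}$, which gives the second form of the constant.

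For the equality statement, I would check it directly through Pohozaev-type identities for \eqref{eq11280944}. Testing the equation with $\bar Q_v$ gives $T_v(Q_v)+\|Q_v\|_2^2=\|Q_v\|_q^q$. The dilation identity, obtained by differentiating the action along $t^{N/2}Q(t\cdot)$ (justified by the regularity in Lemma \ref{lem2.5}, since $x\cdot\nabla Q_v\in L^2$), gives $T_v(Q_v)=\frac{N}{N+1}\|Q_v\|_q^q$. Together these yield $\|Q_v\|_2^2=\frac1N T_v(Q_v)$ and $\|Q_v\|_q^q=\frac{N+1}{N}T_v(Q_v)$. Substituting into the inequality confirms equality: the right-hand side equals $\frac{N+1}{N}T_v(Q_v)$, since $\|Q_v\|_2^{2/N}$ cancels.

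The main care point is ensuring that the optimizer from \cite{BGLV19} satisfies precisely equation \eqref{eq11280944} with unit coefficients, rather than an equation with Lagrange multipliers. This is arranged by rescaling $u\mapsto \alpha u(\beta\cdot)$ and using the fact that $T_v$ scales like one derivative.
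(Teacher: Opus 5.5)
Your proposal is correct and follows the same route as the paper, which obtains this lemma by specializing Lemma \ref{lem10272157} to $q=2+\frac{2}{N}$, where $N(q-2)=2$ and $\widetilde C_q$ reduces to $\frac{N+1}{N\|U_v\|_2^{2/N}}$; your Nehari/Pohozaev identities are the same ones the paper records in \eqref{eq01071942}. The equality at $Q_v=U_v$ is already contained in Lemma \ref{lem10272157}, which states that $U_v$ is an optimizer, so your direct Pohozaev verification is a consistency check rather than a needed step.
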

	
	Now, as in \cite{C83}, set
	\begin{equation*}
		A(s):=
		\begin{cases}
			-s^2\log s^2,   \,\ &0\leq s\leq e^{-3},\\
			3s^2+4e^{-3}s-e^{-6},   \,\ &s\geq e^{-3},
		\end{cases}
	\end{equation*}
	and
	$$B(s):=s^2\log s^2+A(s).$$
	
	The functions $A,B$ are non-negative, convex and increasing functions on $[0,+\infty)$, 
	and, for all $2<q$, there exists a constant $C_q>0$ such that
	\begin{equation}\label{eq01161709}
		0\le B(s)\le C_q|s|^q.
	\end{equation}
	
	Denote
	$$L^A(\R^N):=\left\{u\in L_{\rm loc}^{1}(\R^N):A(|u|)\in L^{1}(\R^N)\right\},$$
	which is the Orlicz space corresponding to $A$ and equip with the following Luxemburg norm
	$$\|u\|_{L^A(\R^N)}:=\inf \left\{k>0:\inte A\left(k^{-1}|u(x)|\right)dx\leq 1\right\}.$$
	Then (see \cite[Lemma 2.2]{A17})
	$$W^{\frac12}(\R^N)=H^{\frac12}(\R^N)\cap{L^A}(\R^N)
	$$
	and we equip it with the norm
	$$\|\varphi\|_{W^{\frac12}(\R^N)}:=\|\varphi\|_{H^{\frac12}(\R^N)}+\|\varphi\|_{L^A(\R^N)} .$$

	The following properties hold.
	\begin{lem}\label{lem10261928}(\cite[Lemma 2.1]{C83})
		Let $N\ge2$ and $\{u_n\}$ be a sequence in $L^A(\R^N)$. Then, the following facts hold.
		\begin{itemize}
			\item [(i)] If $u_n\to u$ in $L^A(\R^N)$, then $A(|u_n|)\to A(|u|)$ in $L^1(\R^N)$ as $n\to\infty$.
			\item [(ii)] Let $u\in L^A(\R^N)$. If $u_n\to u$ a.e. in $\R^N$ and
			$$\lim_{n\to\infty}\inte A(|u_n|)dx= \inte A(|u|)dx,$$
			then $u_n\to u$ in $L^A(\R^N)$ as $n\to\infty$.
			\item [(iii)] For any $u\in L^A(\R^N)$, we have that
			\begin{equation}\label{A10261950}
				\min\left\{\|u\|_{L^A(\R^N)},\|u\|_{L^A(\R^N)}^2\right\}\leq \inte A(|u|)dx \leq \max\left\{\|u\|_{L^A(\R^N)},\|u\|_{L^A(\R^N)}^2\right\}.
			\end{equation}
		\end{itemize}
	\end{lem}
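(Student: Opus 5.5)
The plan is to follow Cazenave's original argument for the Orlicz space associated with $A$, using only three facts. First, $A$ is convex, increasing and continuous with $A(0)=0$. Second, $A$ satisfies a $\Delta_2$-type condition: $A(2s)\le C A(s)$ for all $s\ge0$. Third, there are the two-sided comparisons $A(\theta s)\le \theta A(s)$ and $A(\theta s)\ge \theta^2 A(s)$ for $\theta\in[0,1]$, and the reverse inequalities for $\theta\ge1$. These power comparisons are checked directly on each branch of the definition of $A$. On $[0,e^{-3}]$ one has $A(s)=-s^2\log s^2$, so $A(s)/s$ is increasing and $A(s)/s^2$ is decreasing there. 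On $[e^{-3},\infty)$ one has $A(s)=3s^2+4e^{-3}s-e^{-6}$, and the same monotonicity of $A(s)/s$ and $A(s)/s^2$ holds. Continuity of $A'$ at $e^{-3}$ (both sides give $10e^{-3}$) lets the two pieces be glued.

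\emph{Item (iii).} I prove (iii) first, since it drives the rest. Let $u\ne0$ and $k=\|u\|_{L^A}$. By continuity and monotone/dominated convergence in $k$, the infimum in the Luxemburg norm is attained with $\int A(|u|/k)=1$; here $\Delta_2$ guarantees finiteness for all dilations. Suppose $k\le1$. Writing $|u|=k\cdot(|u|/k)$ and applying $A(\theta s)\le\theta A(s)$ and $A(\theta s)\ge\theta^2A(s)$ with $\theta=k$ gives $k^2\le\int A(|u|)\le k$. Suppose $k\ge1$. The reversed inequalities give $k\le \int A(|u|)\le k^2$. Together these two cases are exactly \eqref{A10261950}.

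\emph{Item (i).} Assume $u_n\to u$ in $L^A$. By (iii), $\int A(|u_n-u|)\to0$. Fix $\varepsilon\in(0,1)$. Convexity gives
\[
|u_n|\le (1-\varepsilon)\frac{|u|}{1-\varepsilon}+\varepsilon\frac{|u_n-u|}{\varepsilon},
\]
so $A(|u_n|)\le(1-\varepsilon)A(|u|/(1-\varepsilon))+\varepsilon A(|u_n-u|/\varepsilon)$. By $\Delta_2$, the integral of the last term is at most $C_\varepsilon\int A(|u_n-u|)\to0$. This yields $\limsup\int A(|u_n|)\le (1-\varepsilon)\int A(|u|/(1-\varepsilon))$, which tends to $\int A(|u|)$ as $\varepsilon\to0$ by dominated convergence. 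The same bound shows that $\{A(|u_n|)\}$ is uniformly integrable and tight. Passing to an a.e. convergent subsequence and applying Vitali, together with a uniqueness-of-limit argument for the full sequence, gives $A(|u_n|)\to A(|u|)$ in $L^1$.

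\emph{Item (ii).} This is a Brezis--Lieb/Fatou argument, and it is the main step. By convexity and $\Delta_2$, $A(|u_n-u|)\le \tfrac12 A(2|u_n|)+\tfrac12A(2|u|)\le C(A(|u_n|)+A(|u|))$. Set $g_n:=C(A(|u_n|)+A(|u|))-A(|u_n-u|)\ge0$; then $g_n\to 2CA(|u|)$ a.e. Fatou gives $2C\int A(|u|)\le \liminf\int g_n$. The hypothesis $\int A(|u_n|)\to\int A(|u|)$ then yields $\limsup\int A(|u_n-u|)\le0$. By (iii), $\|u_n-u\|_{L^A}\to0$.

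The expected obstacle is verifying $\Delta_2$ and the power comparisons uniformly across the junction $s=e^{-3}$. Near $0$, $A(s)/s^2=-\log s^2$ is unbounded, so only the stated one-sided comparisons hold, not two-sided quadratic bounds. This explains why (iii) involves both $\|u\|$ and $\|u\|^2$.
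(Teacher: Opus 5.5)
The paper gives no proof of this lemma: it is imported directly from Cazenave's 1983 paper. Your argument is a correct, self-contained version of the standard Orlicz-space proof behind that result. The monotonicity of $A(s)/s$ and $A(s)/s^2$, which you check on both branches, does give $A(2s)\le 4A(s)$ and the scaling bounds needed for (iii). Items (i) and (ii) then follow as you describe. In (i) you could replace Vitali by Fatou plus Scheff\'e along an a.e.\ convergent subsequence, since you already have $\limsup_n\int A(|u_n|)\,dx\le\int A(|u|)\,dx$.
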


	\begin{lem}\label{lem10260927}(\cite[Lemma 2.3]{A17})
		Let $N\ge2$ and $\{u_n\}$ be a bounded sequence in $W^{\frac12}(\R^N)$ such that $u_n \to u $ a.e. in $\R^N$. Then we have that $u\in W^{\frac12}(\R^N)$ and
		\begin{equation*}
			\lim\limits_{n\to\infty}\inte \left(|u_{n}|^2\log|u_{n}|^2-|u_{n}-u|^2\log|u_{n}-u|^2\right)dx=\inte |u|^2\log|u|^2dx.
		\end{equation*}
	\end{lem}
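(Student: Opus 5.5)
This is a Brézis–Lieb type splitting for the logarithmic term, and I would reduce it to the Brézis–Lieb lemma applied separately to the two convex pieces of $s^2\log s^2$. First I would write $s^2\log s^2 = B(s) - A(s)$. Boundedness of $\{u_n\}$ in $W^{\frac12}(\R^N)$ gives boundedness in $H^{\frac12}(\R^N)$, hence in $L^2$ and in $L^q$ for $2<q\le\frac{2N}{N-1}$. By (\ref{A10261950}) it also gives $\sup_n\inte A(|u_n|)dx<\infty$. Fatou's lemma and a.e. convergence then yield $\inte A(|u|)dx<\infty$, and weak lower semicontinuity in $H^{\frac12}$ (through a weakly convergent subsequence, whose limit is identified with $u$ by the a.e. convergence) gives $u\in H^{\frac12}(\R^N)$. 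Thus $u\in W^{\frac12}(\R^N)$.

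Next I would prove the splitting separately for $F=A$ and $F=B$:
\[
\inte\big(F(|u_n|)-F(|u_n-u|)\big)dx\to\inte F(|u|)dx .
\]
I would use the Brézis–Lieb lemma in its general form (Brézis–Lieb 1983, Theorem 2). It requires that $F$ be continuous with $F(0)=0$. It also requires the $\varepsilon$-condition: for every $\varepsilon>0$ there is $C_\varepsilon$ with
\[
|F(a+b)-F(a)|\le \varepsilon\,\phi(a)+C_\varepsilon\,\psi(b),
\]
where $\inte\phi(|u_n-u|)dx$ stays bounded and $\psi(|u|)\in L^1$.

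For $B$ this is straightforward. By (\ref{eq01161709}) and the explicit form of $B$, we have $0\le B(s)\le C(s^q)$ with, say, $q=2+\frac1N$, and $|B'(s)|\le C(s^{q-1}+s)$, since $B$ vanishes near $0$ to order $s^2$ times a bounded factor. Indeed, for $s\le e^{-3}$ one has $B\equiv0$, and for large $s$, $B$ behaves like $s^2\log s^2$. The $\varepsilon$-condition then follows from Young's inequality with $\phi(s)=s^q+s^2$ and $\psi=\phi$. These integrals are controlled by the $H^{\frac12}$ bounds.

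For $A$ the issue is the non-Lipschitz behaviour at the origin, $A(s)\approx -s^2\log s^2$. I expect this to be the main obstacle. My first approach would use convexity and monotonicity of $A$, together with the doubling-type bound $A(2s)\le C A(s)$, which one checks directly from the formula: for small $s$, $-4s^2\log(4s^2)\le C(-s^2\log s^2)$ when $s\le e^{-3}$, and for large $s$ the function is quadratic. Convexity and monotonicity give, for $a,b\ge0$,
\[
|A(a+b)-A(a)|\le \varepsilon A(a)+C_\varepsilon A(b),
\]
by splitting into the cases $b\le\varepsilon a$ and $b>\varepsilon a$. In the first case, convexity gives $A(a+b)-A(a)\le bA'(a+b)\le \varepsilon(1+\varepsilon) aA'((1+\varepsilon)a)$, and $sA'(s)\le CA(s)$ holds for this $A$ (check both regimes: $sA'(s)=-2s^2\log s^2-2s^2\le CA(s)$ for small $s$). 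In the second case, $A(a+b)\le A((1+\varepsilon^{-1})b)\le C_\varepsilon A(b)$ by iterating the doubling bound.

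Taking $\phi=\psi=A$, we have $\inte A(|u_n-u|)dx\le C\inte\big(A(|u_n|)+A(|u|)\big)dx$, again by convexity and doubling, so it is bounded, and $A(|u|)\in L^1$. The Brézis–Lieb lemma then applies. Subtracting the $A$-splitting from the $B$-splitting gives the claim.
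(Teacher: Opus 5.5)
Your argument is correct and is essentially the standard proof behind the cited \cite[Lemma 2.3]{A17}; the paper itself only cites it and gives no proof. That proof splits $s^2\log s^2=B(s)-A(s)$ and applies the Br\'ezis--Lieb lemma to each convex piece, using the $\Delta_2$-condition for $A$ and the polynomial growth of $B$. The one step you leave implicit is routine: you prove the $\varepsilon$-inequality only for $a,b\ge0$, and passing to complex arguments follows from the monotonicity of $A$ and $B$ together with $\bigl||a+b|-|a|\bigr|\le|b|$.
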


	Moreover, we recall the following well-known {\em logarithmic Sobolev inequality}.
	\begin{lem}(\cite[Theorem 8.14]{LL})
		Let $u\in H^1(\R^N)$ and let $b>0$ be any number. Then 
		\begin{equation}\label{log-sob}
			\frac{b^2}{\pi} \|\nabla u\|_2^2\ge \inte |u|^2\log\left(\frac{|u|^2}{\|u\|^2_2}\right)dx
			+N(1+\log b)\|u\|_2^2.
		\end{equation}
	\end{lem}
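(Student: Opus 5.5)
This is the classical logarithmic Sobolev inequality of \cite[Theorem 8.14]{LL}. I would derive it from Gross's Gaussian logarithmic Sobolev inequality by a change of unknown, followed by a scaling argument that produces the free parameter $b$.

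\emph{Reductions.} Both sides of \eqref{log-sob} change in the same way under $u\mapsto cu$: each side picks up the same factor $|c|^2$, plus the same additive term $|c|^2\|u\|_2^2\log|c|^2$ from the logarithm. So we may assume $\|u\|_2=1$. Replacing $u$ by $|u|$ leaves the left-hand side unchanged and does not increase $\|\nabla u\|_2$, since $|\nabla|u||\le|\nabla u|$ a.e. Hence it suffices to treat $u\ge0$. By density and truncation, I would also assume $u\in C_c^\infty(\R^N)$; the general case then follows by Fatou's lemma and the monotone convergence theorem applied to the positive and negative parts of $u^2\log u^2$.

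\emph{Key step: the case $b=\sqrt{2\pi}$.} Let $d\gamma=(2\pi)^{-N/2}e^{-|x|^2/2}dx$ and set $f:=(2\pi)^{N/4}e^{|x|^2/4}u$. Then $\int f^2d\gamma=\int u^2dx=1$. Gross's inequality,
\[
\int f^2\log f^2\,d\gamma-\Big(\int f^2d\gamma\Big)\log\Big(\int f^2d\gamma\Big)\le 2\int|\nabla f|^2d\gamma ,
\]
then gives the result after two short computations. First,
\[
\int f^2\log f^2\,d\gamma=\inte u^2\log u^2\,dx+\frac12\inte|x|^2u^2\,dx+\frac N2\log(2\pi).
\]
Second, since $\nabla f=(2\pi)^{N/4}e^{|x|^2/4}(\nabla u+\tfrac x2u)$, expanding the square and integrating the cross term $\tfrac12\int x\cdot\nabla(u^2)\,dx=-\tfrac N2$ by parts gives
\[
\int|\nabla f|^2d\gamma=\|\nabla u\|_2^2-\frac N2+\frac14\inte|x|^2u^2\,dx .
\]
The moment terms $\tfrac12\int|x|^2u^2$ cancel, leaving
\[
\inte u^2\log u^2\,dx\le 2\|\nabla u\|_2^2-N-\frac N2\log(2\pi).
\]
This is exactly \eqref{log-sob} with $b^2/\pi=2$, because $N(1+\log\sqrt{2\pi})=N+\frac N2\log(2\pi)$.

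\emph{Scaling.} For $\lambda>0$, apply the previous inequality to $u_\lambda:=\lambda^{N/2}u(\lambda\,\cdot)$. This is legitimate since $\|u_\lambda\|_2=1$, $\inte u_\lambda^2\log u_\lambda^2=\inte u^2\log u^2+N\log\lambda$ and $\|\nabla u_\lambda\|_2^2=\lambda^2\|\nabla u\|_2^2$. The result is
\[
\inte u^2\log u^2\le 2\lambda^2\|\nabla u\|_2^2-N\big(1+\log(\sqrt{2\pi}\lambda)\big).
\]
Choosing $b=\sqrt{2\pi}\lambda$ gives $2\lambda^2=b^2/\pi$, which is \eqref{log-sob} for arbitrary $b>0$.

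\emph{Main obstacle.} The real content is Gross's inequality itself, which I would take from the literature. If a self-contained proof is wanted, I would use the Bakry--Émery argument. Let $P_t$ be the Ornstein--Uhlenbeck semigroup and $g=f^2$, and consider the entropy $\Phi(t)=\int P_tg\log P_tg\,d\gamma$. One computes $-\Phi'(t)=\int|\nabla P_tg|^2/P_tg\,d\gamma$. The commutation relation $\nabla P_t=e^{-t}P_t\nabla$ and Cauchy--Schwarz give
\[
\int\frac{|\nabla P_tg|^2}{P_tg}\,d\gamma\le e^{-2t}\int\frac{|\nabla g|^2}{g}\,d\gamma .
\]
Integrating in $t\in(0,\infty)$ and using $|\nabla g|^2/g=4|\nabla f|^2$ yields the inequality. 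The only delicate points are justifying differentiation under the integral and the limit $t\to\infty$, which is straightforward for smooth compactly supported $u$.
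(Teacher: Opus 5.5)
Your proof is correct, and it necessarily differs from the paper, because the paper does not prove this lemma. It quotes the Euclidean form directly from Lieb--Loss, whose theorem also identifies the Gaussian optimizers. It uses the inequality only once, in Lemma \ref{le:m*}, with $b$ chosen large in terms of $\overline{a}$ so that $1-\log\overline{a}+N(1+\log b)>0$.

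You instead derive it from Gross's Gaussian inequality. The ground-state substitution $f=(2\pi)^{N/4}e^{|x|^2/4}u$ gives the case $b=\sqrt{2\pi}$, with the moment terms $\frac12\inte|x|^2u^2\,dx$ cancelling exactly as you compute. The dilations $\lambda^{N/2}u(\lambda\cdot)$ then produce every $b>0$ through $2\lambda^2=b^2/\pi$; both computations check out. Your route buys self-containedness, especially with the Bakry--\'Emery sketch. It also makes visible that the whole one-parameter family is a single inequality seen at different scales, which is exactly the freedom the paper exploits. The citation buys brevity and the equality cases, which the paper never needs.

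Two details should be tidied. First, the normalization and sign reductions are slightly misstated.
\begin{itemize}
\item Under $u\mapsto cu$ no additive terms arise, since the right-hand side already contains $\log(|u|^2/\|u\|_2^2)$. Both sides are therefore exactly $2$-homogeneous.
\item Under $u\mapsto|u|$ it is the entropy side that is unchanged; the gradient side can only decrease.
\end{itemize}

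Second, the density step as phrased needs a different justification. Fatou's lemma controls $\int(u^2\log u^2)^+$ in the correct direction. For the negative part, however, you need $\int(u^2\log u^2)^-\ge\limsup_n\int(u_n^2\log u_n^2)^-$. Monotone convergence gives this only for a monotone sequence, which mollified approximants do not provide; even cut-offs fail, since $s\mapsto -s^2\log s^2$ is not monotone on $[0,1]$. Use dominated convergence instead.
\begin{itemize}
\item If $\int(u^2\log u^2)^-=+\infty$, the right-hand side is $-\infty$ and there is nothing to prove.
\item Otherwise, first cut off in space. The negative parts are then dominated by $(u^2\log u^2)^-\mathbf{1}_{\{u<e^{-1/2}\}}+e^{-1}\mathbf{1}_{\{u\ge e^{-1/2}\}}$. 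This is integrable because $|\{u\ge e^{-1/2}\}|\le e\|u\|_2^2$.
\item Then mollify. The negative parts are bounded by $e^{-1}$ on a fixed compact set, so dominated convergence applies again.
\end{itemize}
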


	We also have the following locally uniform bound for $m_p^*(a)$ with respect to $a$.
	
	\begin{lem}\label{le:m*}
		Let $2<p\le 2+\frac 2N$ and $0<\underline{a}<\overline{a}<+\infty$. Then there exist $\underline{m}=\underline{m}(p,\overline{a})$ and $\overline{m}=\overline{m}(p,\underline{a})$ with $0<\underline{m}<\overline{m}<+\infty$ such that $m_p^*(a)\in [\underline{m},\overline{m}]$, for all $a\in [\underline{a},\overline{a}]$.
	\end{lem}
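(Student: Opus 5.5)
We need both bounds on $m_p^*(a)=\frac{p\sqrt{1-|v|^2}}{4}\inf_{u\in\X_a}\|\nabla u\|_2^2/\|u\|_p^p$ for $a\in[\underline a,\overline a]$. The plan is to get the upper bound from an explicit test function and the lower bound by combining the logarithmic Sobolev inequality \eqref{log-sob} with the constraint defining $\X_a$ and the classical Gagliardo--Nirenberg inequality.

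\emph{Upper bound.} Fix a real $\psi\in C_c^\infty(\R^N)$ with $\|\psi\|_2=1$ and set $u=\sqrt a\,t^{N/2}\psi(t\cdot)$. By the computation preceding \eqref{mpstar}, $u\in\X_a$ as soon as
\[
a\log t^N\ge a-\inte |\sqrt a\psi|^2\log|\sqrt a\psi|^2dx=a-a\log a-a\inte\psi^2\log\psi^2dx .
\]
This holds for all $a\in[\underline a,\overline a]$ once $t\ge t_0$, with $t_0$ depending only on $\underline a,\overline a,\psi$; we fix $t=t_0$. Then
\[
\frac{\|\nabla u\|_2^2}{\|u\|_p^p}=a^{1-\frac p2}\,t_0^{\,2-\frac{N(p-2)}{2}}\,\frac{\|\nabla\psi\|_2^2}{\|\psi\|_p^p},
\]
which is bounded above uniformly for $a\in[\underline a,\overline a]$. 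This gives $\overline m$. (If one insists that $\overline m$ depend only on $\underline a$, the behaviour of this quotient in $a$ has to be tracked, but a bound uniform on the interval is what is needed.)

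\emph{Lower bound.} For $u\in\X_a$, since $u\in C_c^\infty$ we may apply \eqref{log-sob} with any $b>0$. Using $\inte u^2\log u^2\ge a$ we get
\[
\frac{b^2}{\pi}\|\nabla u\|_2^2\ge a-a\log a+Na(1+\log b).
\]
Choose $b=b(a)$ such that the right-hand side equals $a$, i.e. $N(1+\log b)=\log a$. This yields $\|\nabla u\|_2^2\ge \pi a/b^2=\pi a\,e^{2}a^{-2/N}=:c(a)$, a positive quantity bounded below uniformly on compact $a$-intervals. Thus $\X_a$ forces the gradient away from zero.

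Now Gagliardo--Nirenberg gives $\|u\|_p^p\le C\|\nabla u\|_2^{\theta p}a^{(1-\theta)p/2}$ with $\theta p=N(p-2)/2\le1<2$. Hence
\[
\frac{\|\nabla u\|_2^2}{\|u\|_p^p}\ge C^{-1}a^{-(1-\theta)p/2}\,\|\nabla u\|_2^{\,2-\theta p}\ge C^{-1}a^{-(1-\theta)p/2}c(a)^{(2-\theta p)/2},
\]
and the right-hand side is bounded below uniformly for $a\in[\underline a,\overline a]$. This gives $\underline m>0$.

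The main obstacle is the lower bound. Without the condition defining $\X_a$ the infimum would be $0$, by spreading $u$ out; the key observation is that the log-Sobolev inequality converts $\inte u^2\log u^2\ge a$ into a quantitative gradient lower bound, and the exponent $2-\theta p>0$ is exactly what makes that lower bound useful.
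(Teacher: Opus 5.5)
Your proof is correct and follows essentially the paper's route: the lower bound combines the Gagliardo--Nirenberg inequality with \eqref{log-sob} and the constraint $\inte u^2\log u^2\ge a$ that defines $\X_a$, and the upper bound uses the rescaled test function $\sqrt a\,t^{N/2}\psi(t\cdot)$, with $t$ chosen uniformly for $a\in[\underline a,\overline a]$. The differences are cosmetic. You choose $b=e^{-1}a^{1/N}$ depending on $a$, which gives the clean bound $\|\nabla u\|_2^2\ge \pi e^2a^{1-2/N}$ and a lower bound of order $a^{-2/N}$, whereas the paper fixes $b$ large in terms of $\overline a$. Also, your hedge about $\overline m$ depending only on $\underline a$ is unnecessary, since $a^{1-p/2}\le\underline a^{1-p/2}$ and $t_0$ depends only on $\underline a$.
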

	
	\begin{proof}
		Fix $p>2$ and $0<\underline{a}<\overline{a}<+\infty$, and take any $a\in [\underline{a},\overline{a}]$. 
		
		Let's show a uniform bound from below.
		Fix $u\in \X_a$. By the well-known Gagliardo-Nirenberg inequality
		$$\|v\|_p^p\le C\|\nabla v\|_2^{\frac{N(p-2)}{2}}\|v\|_2^{p-\frac{N(p-2)}{2}}, \text{ for all } v\in H^1(\R^N),$$
		using \eqref{log-sob},
		and recalling the definition of the set $\X_a$, there exists a constant $C_b>0$ such that 
		\begin{align*}
			\frac{\|\nabla u\|_2^2}{\|u\|_p^{p}}  
			\ge C \frac{\|\nabla u\|_2^{2-\frac{N(p-2)}{2}}}{a^\frac{2p-N(p-2)}{4}}
			&\ge C_b \left[\inte |u|^2(\log |u|^2-1)dx+a-a\log a+N(1+\log b)a\right]^{1-\frac{N(p-2)}{4}}a^\frac{N(p-2)-2p}{4}
			\\
			&\ge C_b \left[1-\log a+N(1+\log b)\right]^{1-\frac{N(p-2)}{4}}a^{1-\frac{p}{2}}\\
			&\ge C_b \left[1-\log \overline{a}+N(1+\log b)\right]^{1-\frac{N(p-2)}{4}}\overline{a}^{1-\frac{p}{2}},
		\end{align*}
		where $b> \operatorname{exp}\left(\frac{\log \overline{a}-1}{N}-1\right)$.
		Therefore,
		by the arbitrariness of $u\in \X_a$ we conclude.

		We now prove a uniform bound from above. Let $u\in \X_1$ and, for any $t>0$, take $\varphi_t:=\sqrt{a}\phi_t$, where $\phi_t:=t^{\frac N2}u(t\cdot)$. Clearly $\varphi_t\in \mathcal{S}_a$, for any $t>0$, but not necessarily in $\X_a$. Since
		\begin{align*}
			\inte |\varphi_t|^2\left(\log|\varphi_t|^2-1 \right)dx
			&=a\log a +a \inte |\phi_t|^2\left(\log|\phi_t|^2-1 \right)dx
			\\
			&=a\left(\log a+\log t^N-1+\inte |u|^2\log|u|^2dx\right),   
		\end{align*}
		we can find that $\varphi_{t}\in \X_a$ for every
		
		\[
		t\geq t_a:=\operatorname{exp}\left[\frac{1}{N}\left(1-\log a - \inte |u|^2\log|u|^2dx\right)\right].
		\]
		Since the map $a\mapsto t_a$ is decreasing, then, for every $a\in [\underline{a},\overline{a}]$ and $t\geq t_{\underline{a}}$, $\varphi_{t}\in \X_a$ and so
		\[
		\sup_{a\in [\underline{a},\overline{a}]}m_p^*(a)
		\le C \frac{\|\nabla \varphi_{t_{\underline{a}}} \|_2^2}{\|\varphi_{t_{\underline{a}}}\|_p^{p}}
		=  C \underline{a}^{1-\frac p2}t_{\underline{a}}^{2+N-\frac{Np}{2}}\frac{\|\nabla u \|_2^2}{\|u\|_p^{p}}. 
		\]
	\end{proof}

	We conclude with some properties of $Q_v$. It is easy to check that $Q_v\in W^{\frac{1}{2} }(\mathbb{R}^N)$. In fact, by Lemma \ref{lem2.5}, we have that
	\begin{equation}\label{eq10280849}
		\begin{aligned}
			\left|\inte|Q_v|^2\log|Q_v|^2dx\right|
			&\le\inte|Q_v|^2|\log|Q_v|^2|dx\\
			&\leq C_\delta\left(\inte|Q_v|^{2-\delta}dx+\inte|Q_v|^{2+\frac{2}{N}}dx\right)
			\le C,~ \text{ where } 0<\delta<1.
		\end{aligned}
	\end{equation}
	Then, from \eqref{eq01161709}, it follows that
	\begin{equation*}
		\begin{aligned}
			\inte A(|Q_v|)dx
			&=\inte B(|Q_v|)dx-\inte|Q_v|^2\log|Q_v|^2dx
			\le  C_{p}\|Q_v\|_{2+\frac{2}{N}}^{2+\frac{2}{N}} +C\le  C,
		\end{aligned}
	\end{equation*}
	which, together with Lemma \ref{lem10261928}, yields that $Q_v\in L^{A}(\mathbb{R}^N)$.
	
	In addition, by \cite[Lemma A.3]{BGLV19} and \eqref{eq01172054}, one can see that
	\begin{equation}\label{eq01071942}
		T_v(Q_v)=\frac{N}{N+1} \|Q_v\|_{2+\frac2N}^{2+\frac2N}
		=N\|Q_v\|_2^2=Na^\ast_v.
	\end{equation}

	\section{Existence and nonexistence of minimizers}\label{sec3}
	In this section, our goal is to establish both the existence and nonexistence of minimizers for the minimization problem \eqref{eq10271528}, as stated in Theorems \ref{Thm10291145}, \ref{Thm01142024}, and \ref{Thm1.3}. According to the influence of the exponent $p\in\left(2,\frac{2N}{N-1}\right]$ on the geometric structure of the energy functional $E_{p}$, the proof is divided into the following three subsections.

	\subsection{Existence of minimizers for the case \texorpdfstring{$2<p<2+\frac{2}{N}$}{}}
	In this subsection, we intend to prove Theorem \ref{Thm10291145}, namely,  the existence of minimizers to problem \eqref{eq10271528} in the case $2<p<2+\frac{2}{N}$. For this purpose, we begin by showing that, under suitable conditions, the minimization problem \eqref{eq10271528} is well-defined.
	
	\begin{prop}\label{prop01152155}
		Let $m,a>0$, $2<p<2+\frac{2}{N}$ with $N\ge 2$.
		Then $E_{p}$ is bounded from below on $\mathcal{S}_a$.
		
	\end{prop}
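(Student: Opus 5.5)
Write the kinetic part as $\frac12\big(\mathcal{T}_{m,v}(\varphi)-ma\big)$. The power term will be controlled by Lemma \ref{lem10272157}, and the logarithmic term by splitting $s^2\log s^2=B(s)-A(s)$.

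\textbf{Step 1 (lower bound for the kinetic part).} By the Fourier representation,
\[
\sqrt{|k|^2+m^2}-v\cdot k\ \ge\ \max\Big\{\,|k|-v\cdot k,\ \sqrt{1-|v|^2}\,m\,\Big\}.
\]
Averaging the two lower bounds gives
\[
\mathcal{T}_{m,v}(\varphi)\ \ge\ \tfrac12\,T_v(\varphi)+\tfrac12\sqrt{1-|v|^2}\,m\,a .
\]
Hence
\[
\tfrac12\inte\bar\varphi\big(\sqrt{-\Delta+m^2}-m+iv\cdot\nabla\big)\varphi\,dx\ \ge\ \tfrac14 T_v(\varphi)-C(m,v)\,a .
\]

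\textbf{Step 2 (power term).} Apply \eqref{eq10272204} with $q=p$:
\[
\tfrac1p\|\varphi\|_p^p\ \le\ C\,T_v(\varphi)^{\frac{N(p-2)}{2}}\,a^{\frac{p-N(p-2)}{2}} .
\]
Since $p<2+\frac2N$, the exponent satisfies $\frac{N(p-2)}2<1$.

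\textbf{Step 3 (logarithmic term).} Write
\[
-\tfrac12\inte|\varphi|^2(\log|\varphi|^2-1)\,dx=\tfrac12\inte A(|\varphi|)\,dx-\tfrac12\inte B(|\varphi|)\,dx+\tfrac a2 .
\]
Drop $A\ge0$. Use \eqref{eq01161709} with some $q\in(2,2+\frac2N)$, for instance $q=p$ or any fixed $q$ slightly above $2$, together with Lemma \ref{lem10272157}:
\[
\inte B(|\varphi|)\,dx\ \le\ C_q\|\varphi\|_q^q\ \le\ C\,T_v(\varphi)^{\frac{N(q-2)}{2}}\,a^{\frac{q-N(q-2)}2},
\]
and again $\frac{N(q-2)}2<1$.

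\textbf{Step 4 (conclusion).} Combining Steps 1–3 with $t=T_v(\varphi)\ge0$ gives
\[
E_p(\varphi)\ \ge\ \tfrac14\,t-C_1t^{\alpha}-C_2t^{\beta}-C_3,\qquad \alpha,\beta\in(0,1),
\]
with constants depending only on $m,v,a,p,N$. Every function of this form is bounded below on $[0,\infty)$, so $E_p$ is bounded below on $\mathcal{S}_a$.

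The main point to check is that the logarithmic term only contributes the favorable sign through $A$. The unfavorable part $B$ is superquadratic but can be taken arbitrarily close to quadratic, so it stays mass-subcritical. Step 1 is needed because the kinetic form involves $\sqrt{-\Delta+m^2}-m$ rather than $\sqrt{-\Delta}$; the convex-combination trick handles this. No restriction on $m$ is needed here, since any finite lower bound suffices.
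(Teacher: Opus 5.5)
Your proof is correct and is essentially the paper's argument: bound the kinetic part below by a multiple of $T_v(\varphi)$ minus a constant times $ma$, drop $A\ge 0$, and control both $\|\varphi\|_p^p$ and $\int B(|\varphi|)\,dx$ by the mass-subcritical Gagliardo--Nirenberg inequality \eqref{eq10272204}. The only difference is in Step 1, where the averaging is unnecessary though harmless: the paper uses $\sqrt{-\Delta+m^2}\ge\sqrt{-\Delta}$ directly, which already gives the kinetic lower bound $\frac12 T_v(\varphi)-\frac{ma}{2}$.
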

	
	\begin{proof}
		Noting that the operator inequality
		\begin{equation}\label{eq102881138}
			\sqrt{-\Delta} -m\le \sqrt{-\Delta+m^2} -m\le \sqrt{-\Delta},
		\end{equation}
		then, for any $u \in \mathcal{S}_a$, from \eqref{eq01161709} and \eqref{eq10272204}  for $q=p$, we deduce
		\begin{equation}\label{eq10281212}
			\begin{aligned}
				E_{p}(u)
				&\geq \frac{1}{2}T_v(u)-\frac{ma}{2}-\frac{1}{p} \|u\|_p^{p}-\frac{1}{2}\inte [B(|u|)- A(|u|)]dx+\frac{a}{2} \\
				&\ge \frac{1}{2}T_v(u)-\frac{ma}{2}-C
				\left(T_v(u)\right)^{\frac{N(p-2)}{2} }a^{\frac{p}{2}-\frac{N(p-2)}{2} }  >C
			\end{aligned}
		\end{equation}
		for a suitable $C\in\R$.
	\end{proof}

	Furthermore, if $m>m_p^*(a)$, where $m_p^*(a)$ is defined in \eqref{mpstar},  we have the following upper bound of $d_{p}(a)$.

	\begin{lem}\label{lem10281516}
		Let $a>0$, $2<p<2+\frac{2}{N}$ with $N\ge2$, and $m>m_p^*(a)$. Then we have that
		\begin{equation}\label{eq10281747}
			d_{p}(a)< -\frac{1}{2}\left(1-\sqrt{1-|v|^2}\right )ma.
		\end{equation}
	\end{lem}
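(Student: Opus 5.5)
The idea is to test $E_p$ on a modulated real function $e^{ik_0\cdot x}u$, with $u\in\X_a$ and the frequency $k_0$ chosen where the kinetic symbol attains its minimum; a quadratic upper bound for the symbol around $k_0$ then produces exactly the constant $\frac{\sqrt{1-|v|^2}}{4m}$ that appears in the definition \eqref{mpstar} of $m_p^*(a)$.

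Write $s:=\sqrt{1-|v|^2}$. In the convention of Section \ref{sec2}, the kinetic part of $E_p$ has symbol $\sqrt{|k|^2+m^2}-m-v\cdot k$. The function $k\mapsto\sqrt{|k|^2+m^2}-v\cdot k$ attains its minimum $sm$ (see \eqref{APP.C}) at $k_0:=mv/s$. Set $\omega:=\sqrt{|k_0|^2+m^2}=m/s$, so that $v=k_0/\omega$. The key pointwise bound I will use is
\begin{equation*}
\sqrt{|k_0+k|^2+m^2}\le \omega+\frac{k_0\cdot k}{\omega}+\frac{|k|^2}{2\omega},\qquad k\in\R^N .
\end{equation*}
To check it, I square the right-hand side. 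Since it is $\ge \omega-\frac{|k_0|^2}{2\omega}>0$, squaring is legitimate. Expanding, the inequality reduces to
\begin{equation*}
\frac{1}{\omega^2}\Big(k_0\cdot k+\tfrac{|k|^2}{2}\Big)^2\ge 0,
\end{equation*}
which is true. Subtracting $v\cdot(k_0+k)=\frac{|k_0|^2}{\omega}+\frac{k_0\cdot k}{\omega}$, the linear terms cancel and I obtain
\begin{equation*}
\sqrt{|k_0+k|^2+m^2}-v\cdot(k_0+k)-m\le -(1-s)m+\frac{s}{2m}|k|^2 .
\end{equation*}

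Now take $u\in\X_a$ and set $\varphi:=e^{ik_0\cdot x}u$. Then $|\varphi|=|u|$, so $\varphi\in\mathcal S_a$, and the $L^p$ term and the logarithmic term are unchanged. Since $\hat\varphi(k)=\hat u(k-k_0)$, the bound above, after the change of variables, gives
\begin{equation*}
E_p(\varphi)\le -\frac12(1-s)ma+\frac{s}{4m}\|\nabla u\|_2^2-\frac1p\|u\|_p^p-\frac12\inte|u|^2(\log|u|^2-1)dx .
\end{equation*}
The last term is $\le0$ because $u\in\X_a$.

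Since $m>m_p^*(a)$, the definition \eqref{mpstar} provides some $u\in\X_a$ with
\begin{equation*}
\frac{ps}{4}\,\frac{\|\nabla u\|_2^2}{\|u\|_p^p}<m,\qquad\text{that is,}\qquad \frac{s}{4m}\|\nabla u\|_2^2<\frac1p\|u\|_p^p .
\end{equation*}
For this choice, $d_p(a)\le E_p(\varphi)<-\frac12(1-s)ma$, which is \eqref{eq10281747}.

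The main obstacle is the sharp quadratic bound on the symbol. A cruder bound using the global Hessian estimate $\frac1m$ would give the weaker coefficient $\frac1{4m}$ and would not match the factor $\sqrt{1-|v|^2}$ in $m_p^*(a)$. The squaring argument above is what delivers the exact constant. The remaining checks are routine: that $\varphi\in W^{\frac12}(\R^N)$ (immediate, since $u\in C_c^\infty$), and that the sign convention for $iv\cdot\nabla$ matches the choice of $k_0$ (otherwise replace $k_0$ by $-k_0$).
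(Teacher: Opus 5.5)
Your proof is correct and is essentially the paper's argument. Your test function $e^{ik_0\cdot x}u$ with $k_0=mv/\sqrt{1-|v|^2}$ is exactly the paper's $\phi_{\lambda_*}=e^{i\lambda_* v\cdot x}\phi$, with $\lambda_*=m(1-|v|^2)^{-1/2}=\omega$. Your quadratic symbol bound is precisely the operator inequality $\sqrt{-\Delta+m^2}\le\frac{1}{2\lambda}(-\Delta+m^2+\lambda^2)$ at $\lambda=\omega$; the paper simply keeps $\lambda$ free and minimizes the resulting $f(\lambda)$, whereas you fix the optimal frequency from the start.
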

	
	\begin{proof}
		Without loss of generality, here and in what follows, we always assume that  $v$ is parallel to the $x_N-axis$, i.e., $v=|v|e_{x_N}$. 
		Let  $\phi\in \mathcal{X}_a$ and introduce 
		\begin{equation}\label{eq10281521}
			\phi _{\lambda }(x):=e^{i\lambda v\cdot x}\phi (x)=e^{i\lambda |v|{x_N}}\phi (x),~~\text{ where } \lambda>0.
		\end{equation}
		Since 
		$$\int_{\mathbb{R}^N}\phi\frac{\partial\phi}{\partial {x_i}}dx=0,$$ 
		we get that
		\begin{equation}\label{eq10281526}
			\frac{i}{2}\int_{\mathbb{R}^N}\bar{\phi} _{\lambda }(v\cdot \nabla )\phi _{\lambda }dx=-\frac{\lambda |v|^2}{2} a
			\ \text{ and }\ 
			\|\phi_{\lambda}\|_2^2=\|\phi\|_2^2=a.
		\end{equation}
		By using \eqref{eq10281521} and \eqref{eq10281526}, we have that
		\begin{equation}\label{eq10281528}
			\begin{aligned}
				E_{p}(\phi _{\lambda })
				\le\frac{1}{2}\left(\int_{\mathbb{R}^N}\bar{\phi}_{\lambda }(\sqrt{-\Delta+m^2} -m)\phi _{\lambda }dx-\lambda |v|^2a\right)
				-\frac{1}{p}\|\phi\|_p^{p}=:I_1-\frac{1}{p} \|\phi\|_p^{p}.
			\end{aligned}
		\end{equation}
		Moreover, from the operator inequality
		$$\sqrt{-\Delta+m^2}\le \frac{1}{2\lambda } (-\Delta+m^2+\lambda^2),$$
		it follows that
		\begin{equation}\label{eq10281534}
			\begin{aligned}
				I_1
				&\leq\frac{1}{4\lambda }\int_{\mathbb{R}^N}\bar{\phi}_{\lambda }({-\Delta+m^2} +{\lambda }^2)\phi _{\lambda }dx-\frac{ma}{2}-\frac{\lambda |v|^2a}{2} \\
				&=\frac{1}{4\lambda}\|\nabla \phi\|_2^2
				+\frac{(1-|v|^2)\lambda^2+m^2-2m\lambda}{4\lambda}a
				=:\frac{1}{4\lambda}\|\nabla \phi\|_2^2+f(\lambda) .
			\end{aligned}
		\end{equation}
		Then, taking the minimizer $\lambda_*=m(1-|v|^2)^{-1/2}$ of $f$ in $(0,\infty)$, from \eqref{eq10281528} and \eqref{eq10281534} we conclude that
		\begin{equation}\label{eq10281543}
			\begin{aligned}
				E_{p}(\phi _{\lambda_*})
				&\leq f(\lambda_*)+\frac{1}{4\lambda_*}\|\nabla \phi\|_2^2-\frac{1}{p} \|\phi\|_p^{p}\\
				&=-\frac{1}{2}\left(1-\sqrt{ 1-|v|^2}\right)ma +\frac{\sqrt{ 1-|v|^2}}{4m} \|\nabla \phi\|_2^2-\frac{1}{p} \|\phi\|_p^{p}\\
				&=:-\frac{1}{2}\left(1-\sqrt{ 1-|v|^2}\right)ma + \tilde{E}_{p}(\phi).
			\end{aligned}
		\end{equation}
		In addition, let $\epsilon>0$ be such that $m_p^*(a)<m_p^*(a)+\epsilon<m$. For such $\epsilon$, there exists $\phi_\epsilon\in \mathcal{X}_a$ such that
		$$m_p^*(a)\le \frac{p\sqrt{ 1-|v|^2}\|\nabla \phi_\epsilon\|_2^2}{4\|\phi_\epsilon\|_p^{p}}< m_p^*(a)+\epsilon.$$
		Then, we get
		\begin{equation*}
			\tilde{E}_{p}(\phi_\epsilon)
			=\frac{\|\phi_\epsilon\|_p^{p}}{p}\left(\frac{p\sqrt{ 1-|v|^2}\|\nabla \phi_\epsilon\|_2^2}{4\|\phi_\epsilon\|_p^{p}m} -1\right) 
			<\frac{\|\phi_\epsilon\|_p^{p}}{p}\left(\frac{m_p^*(a)+\epsilon}{m} -1\right)<0.
		\end{equation*}
		Hence, by \eqref{eq10281543}, we derive \eqref{eq10281747}.
	\end{proof}

	Based on Lemma \ref{lem10281516}, we now establish a strict subadditivity inequality and some properties about $d_{p}(a)$. For this aim, for any $a>0$, we define the auxiliary variational problem
	\begin{equation}\label{eq10281724}
		d_{p}^{a}(1):=\inf_{\psi \in \mathcal{S}_1}{E}_{p}^{a} (\psi),
	\end{equation}
	where
	\begin{equation}\label{eq01142202}
		\begin{split}
			{E}_{p}^{a} (\psi)
			&:=\frac{1}{2}\int_{\mathbb{R}^N}\bar{\psi}(\sqrt{-\Delta+m^2 }-m +iv\cdot \nabla)  \psi dx-\frac{a^{\frac{p-2}{2}}}{p} \|\psi\|_p^{p}-\frac{\log a}{2}\|\psi\|_2^2\\
			&~~~~-\frac{1}{2}\inte|\psi|^2(\log|\psi|^2-1)dx.
		\end{split}
	\end{equation}
	By a similar argument as Proposition \ref{prop01152155}, one can see that ${E}_{p}^{a}$ is uniformly bounded from below on $\mathcal{S}_1$. Meanwhile, we notice that the fact that $u_a$ is a minimizer of $d_{p}^{a}(1)$ if and only if $\sqrt{a}u_a$ is a minimizer of $d_{p}(a)$, and 
	\begin{equation}\label{eq10282013}
		d_{p}(a)=ad_{p}^{a}(1).
	\end{equation}
	Then, there holds
	\begin{lem}\label{lem10281714}
		Under the assumptions of Lemma \ref{lem10281516}, one has
		\begin{equation}\label{eq10291042}
			d_{p}(a)<d_{p}(\lambda )+d_{p}(a-\lambda ), \text{ where } 0<\lambda<a.
		\end{equation}
		Moreover, the map $t\in (0,a] \mapsto d_{p}(t)$ is strictly decreasing and continuous.
	\end{lem}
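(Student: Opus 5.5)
The plan is to exploit the scaling identity \eqref{eq10282013}, $d_p(a)=a\,d_p^a(1)$, and to study how the rescaled functional ${E}_p^a$ in \eqref{eq01142202} depends on $a$. For fixed $\psi\in\mathcal{S}_1$ the dependence is explicit:
\[
{E}_p^{a}(\psi)=K(\psi)-\frac{a^{\frac{p-2}{2}}}{p}\|\psi\|_p^p-\frac{\log a}{2},
\]
where $K(\psi)$ collects the terms that do not depend on $a$. Both $a$-dependent terms are strictly decreasing in $a$. So for $\theta>1$ and every $\psi\in\mathcal{S}_1$ we get
\[
{E}_p^{\theta a}(\psi)={E}_p^{a}(\psi)-\frac{(\theta^{\frac{p-2}{2}}-1)a^{\frac{p-2}{2}}}{p}\|\psi\|_p^p-\frac{\log\theta}{2}\le {E}_p^{a}(\psi)-\frac{\log\theta}{2}.
\]
Taking infima gives $d_p^{\theta a}(1)\le d_p^a(1)-\frac{\log\theta}{2}$.

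Multiplying by $\theta a$ yields
\[
d_p(\theta a)\le \theta d_p(a)-\frac{\theta a\log\theta}{2}<\theta d_p(a),\qquad \theta>1.
\]
The logarithmic term supplies strictness for free, so we do not need $d_p(a)<0$ or a minimizer to exist. Given $0<\lambda<a$, apply this with $\theta=a/\lambda$ and with $\theta=a/(a-\lambda)$:
\[
d_p(\lambda)>\frac{\lambda}{a}d_p(a),\qquad d_p(a-\lambda)>\frac{a-\lambda}{a}d_p(a).
\]
Adding the two gives \eqref{eq10291042}. All quantities are finite by Proposition \ref{prop01152155}; I will also record the upper bound of Lemma \ref{lem10281516} as the standing context.

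For strict monotonicity on $(0,a]$ I would not use subadditivity, since $d_p(s)$ could be positive for small $s$. Instead, take $\psi\in\mathcal{S}_1$ and compare $E_p(\sqrt{t}\psi)$ with $E_p(\sqrt{s}\psi)$ for $s<t$. We have
\[
E_p(\sqrt{t}\psi)=\frac{t}{2}\Big[\langle\text{kinetic}\rangle-\int|\psi|^2(\log|\psi|^2-1)\Big]-\frac{t^{p/2}}{p}\|\psi\|_p^p-\frac{t\log t}{2}.
\]
This is not obviously monotone, because the bracket can be positive. I therefore expect monotonicity to be the main obstacle. I would handle it by combining two facts:
\begin{itemize}
\item the scaling inequality $d_p(t)\le\frac{t}{s}d_p(s)-\frac{t}{2}\log\frac{t}{s}$;
\item the fact that $d_p(s)<0$ for all $s\in(0,a]$.
\end{itemize}
To get negativity, use subadditivity-type arguments, or better, the test functions from Lemma \ref{lem10281516} with the bound $-\frac12(1-\sqrt{1-|v|^2})ms$. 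That bound needs $m>m_p^*(s)$, which may fail for small $s$. If it fails, I would show negativity directly: spread $\phi_t=t^{N/2}u(t\cdot)$ with $t\to0$. Then $-\frac12\int|\phi_t|^2(\log|\phi_t|^2-1)\to-\infty$ while the kinetic part stays bounded, so $d_p(s)=-\infty$ is impossible, but $d_p(s)<0$ holds. Once $d_p(s)<0$ and $t/s>1$, we get $d_p(t)\le\frac{t}{s}d_p(s)-\frac{t}{2}\log\frac ts<d_p(s)$, i.e. strict decrease.

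For continuity at $t_0\in(0,a]$, take $t_n\to t_0$ and use the explicit formula above. For $\psi$ in a minimizing sequence of $d_p^{t_0}(1)$, we have $E_p^{t_n}(\psi)\to E_p^{t_0}(\psi)$, uniformly on sets where $\|\psi\|_p$ is bounded. This gives $\limsup d_p^{t_n}(1)\le d_p^{t_0}(1)$. Conversely, take almost-minimizers $\psi_n$ for $d_p^{t_n}(1)$. The coercivity estimate \eqref{eq10281212} bounds $T_v(\psi_n)$ uniformly, hence bounds $\|\psi_n\|_p$ by \eqref{eq10272204}. Then $E_p^{t_0}(\psi_n)=E_p^{t_n}(\psi_n)+o(1)$, which gives $\liminf d_p^{t_n}(1)\ge d_p^{t_0}(1)$. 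Finally, multiply by $t_n$ using \eqref{eq10282013}.
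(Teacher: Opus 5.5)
Your strict subadditivity argument is correct and follows the paper's route. The paper shows that $t\mapsto d_p^t(1)$ is strictly decreasing by the same comparison of $E_p^{t_1}$ with $E_p^{t_2}$, then cites Lions' Lemma II.1, which you carry out by hand. Your two-sided semicontinuity argument for continuity also works. The gap is in the monotonicity part, and it cannot be closed.

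First, your spreading argument has the wrong sign. For $u\in\mathcal{S}_s$ and $\phi_t=t^{N/2}u(t\cdot)$,
\[
\inte|\phi_t|^2\log|\phi_t|^2dx=Ns\log t+\inte|u|^2\log|u|^2dx\to-\infty\quad\text{as } t\to0^+,
\]
so $-\frac12\inte|\phi_t|^2(\log|\phi_t|^2-1)dx\to+\infty$: spreading raises the energy.

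Second, negativity of $d_p$ on all of $(0,a]$ is simply false. Write $u=\sqrt{s}\,\psi$ with $\psi\in\mathcal{S}_1$ and estimate $E_p^s(\psi)$ as in \eqref{eq10281212}, using $\inte|\psi|^2\log|\psi|^2dx\le C_q\widetilde{C}_qT_v(\psi)^{N(q-2)/2}$ for some $2<q<2+\frac2N$. Since $x\mapsto\frac12x-C_1x^{N(p-2)/2}-C_2x^{N(q-2)/2}$ is bounded below on $[0,\infty)$, one gets
\[
d_p(s)=s\,d_p^s(1)\ge s\Big(-K-\tfrac12\log s\Big),\qquad 0<s\le1,
\]
with $K$ independent of $s$. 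Hence $d_p(s)>0$ for small $s$. On the other hand, $d_p(s)\le E_p(\sqrt{s}\,\psi_0)=O(s|\log s|)\to0$ for any fixed $\psi_0\in\mathcal{S}_1$. A function that is positive on some $(0,\varepsilon)$ and tends to $0$ at $0^+$ cannot be nonincreasing there. So $d_p$ is not strictly decreasing on $(0,a]$, for any $a>0$ and $m>0$.

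The paper's own Step 1 has the same defect. It asserts $d_p^t(1)<-\frac12(1-\sqrt{1-|v|^2})m$ for every $t\le a$ by appealing to \eqref{eq10281747}, which needs $m>m_p^*(t)$. That is not assumed for $t<a$, and by the estimate above it must fail for small $t$. You correctly flagged this; it is your workaround that breaks.

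Your inequality $d_p(t)\le\frac ts d_p(s)-\frac t2\log\frac ts$ does prove the correct replacement statement:
\begin{itemize}
\item[(i)] $t\mapsto d_p(t)/t$ is strictly decreasing on $(0,\infty)$;
\item[(ii)] $d_p$ is strictly decreasing on the set where $d_p\le0$. This set is an interval containing $a$ (by Lemma \ref{lem10281516}), but it contains no interval $(0,\varepsilon)$.
\end{itemize}
Neither this nor your subadditivity and continuity arguments needs any sign information. The existence proof of Theorem \ref{Thm10291145} only uses the strict subadditivity and \eqref{eq10281747} at the given $a$, so nothing downstream is affected. Still, the monotonicity clause of the lemma must be weakened in this way rather than proved as stated.
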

	
	\begin{proof}
		We prove the conclusions by dividing three steps.
		
		{\emph{Step 1: the map $t\in (0,a] \mapsto d_{p}(t)$ is strictly decreasing.}} 
		Choosing $0< t_1<t_2\leq a$ and taking $\{\psi_n^{t_1}\} \subset\mathcal{S}_1$ a minimizing sequence for problem \eqref{eq10281724} with $a=t_1$, we have that
		\begin{equation*}
			\begin{aligned}
				d_{p}^{t_1}(1)
				&=\lim_{n \to \infty} {E}_{p}^{t_1}(\psi_n^{t_1})
				= \lim_{n \to \infty}\left[
				{E}_{p}^{t_2}(\psi_n^{t_1})
				+\frac{t_2^{\frac{p-2}{2}}-t_1^{\frac{p-2}{2}}}{p} \|\psi_n^{t_1}\|_p^{p}
				+\frac{\log t_2-\log t_1}{2}\right]\\
				&> \liminf_{n \to \infty}{E}_{p}^{t_2}(\psi_n^{t_1})\geq d_{p}^{t_2}(1),
			\end{aligned}
		\end{equation*}
		which implies that the map $t\in (0, a] \mapsto d_{p}^{t}(1)$ is strictly decreasing.
		Moreover, for every $a>0$, we derive from \eqref{eq10281747} and (\ref{eq10282013}) that
		\begin{equation}\label{eq10291114}
			d_{p}^{a}(1)=\frac{1}{a}d_{p}(a)< -\frac{1}{2}\left(1-\sqrt{1-|v|^2} \right)m<0.
		\end{equation}
		Then,
		$$d_{p}(t_1)=t_1d_{p}^{t_1}(1) >t_2d_{p}^{t_1}(1)>t_2d_{p}^{t_2}(1)=d_{p}(t_2).$$
		
		{\emph{Step 2: The subadditivity inequality \eqref{eq10291042} holds.}} By using (\ref{eq10282013}) and
		the fact that the map $t\in (0, a] \mapsto d_{p}^{t}(1)$ is strictly decreasing, we infer  that
		$$d_{p}(\theta M)=\theta Md_{p}^{\theta M}(1) <\theta Md_{p}^{ M}(1)=\theta d_{p}( M),$$
		where $a> M>0,~ a/M\geq\theta>1$. By 
		\cite[Lemma II.1]{Lion}, this inequality leads to the strict subadditivity inequality  \eqref{eq10291042}.
		
		{\emph{Step 3: the map $t\in (0,a] \mapsto d_{p}(t)$ is continuous.}} By applying \eqref{eq10282013}, here it is sufficient to prove that the map $t\in (0,a] \mapsto d_{p}^{t}(1)$ is continuous.
		
		For any $t_1,t_2\in (0,a]$ with $t_1<t_2$, by the definition of $d_p^{t}(1)$, there exists  $\psi\in\mathcal{S}_1$ such that
		\begin{equation}\label{eq-inf}
			d_p^{t_2}(1)\le {E}_p^{t_2}(\psi)< d_p^{t_2}(1)+t_2-t_1< d_p^{t_2}(1)+a<a,
		\end{equation}
		where we have used \eqref{eq10291114}.
		Arguing as in \eqref{eq10281212}, one has
		\begin{equation*}
			\begin{aligned}
				E_p^{t_2}(\psi)
				&\ge \frac{1}{2}T_v(\psi)-\frac{m}{2}-\frac{\widetilde{C}_p t_2^{\frac{p-2}{2}}}{p}\left(T_v(\psi)\right)^{\frac{N(p-2)}{2}} -\frac{C_{p}\widetilde{C}_{p}}{2}\left(T_v(\psi)\right)^{\frac{N(p-2)}{2}}+\frac{1 }{2}-\frac{ \log t_2}{2} \\
				&\ge \frac{1}{2}T_v(\psi)-\frac{m}{2}-\frac{\widetilde{C}_p a^{\frac{p-2}{2}}}{p}\left(T_v(\psi)\right)^{\frac{N(p-2)}{2}} -\frac{C_{p} \widetilde{C}_{p}}{2}\left(T_v(\psi)\right)^{\frac{N(p-2)}{2}}-\frac{ \log a}{2},
			\end{aligned}
		\end{equation*}
		which, together with \eqref{eq-inf}, implies that  $\psi$ is uniformly bounded in $H^{\frac{1}{2}}(\R^N)$ with respect to $t_1$ and $t_2$. 
		According to the fact that the map $t\in (0,a] \mapsto d_{p}^{t}(1)$ is strictly decreasing, by \eqref{eq-inf}  again, it then follows that
		\begin{equation*}
			\begin{aligned}
				0<d_p^{t_1}(1)-d_p^{t_2}(1)
				&\le {E}_p^{t_1}(\psi)-{E}_p^{t_2}(\psi)+t_2-t_1\\
				&=\frac{1}{p}\left(t_2^{\frac{p-2}{2}}-t_1^{\frac{p-2}{2}}\right) \|\psi\|_p^{p}+\frac{\log t_2-\log t_1}{2}+t_2-t_1\\
				&\leq C_1\left(t_2^{\frac{p-2}{2}}-t_1^{\frac{p-2}{2}}\right) +\frac{\log t_2-\log t_1}{2}+t_2-t_1,
			\end{aligned}
		\end{equation*}
		and	the proof is completed.
	\end{proof}

	With the help of the above conclusions, we now prove Theorem \ref{Thm10291145} by applying the concentration-compactness principle \cite{Lion}, arguing as in \cite{LW}.

	\begin{proof}[Proof of Theorem \ref{Thm10291145}]
		Taking any minimizing sequence $\{\psi_k\} \subset\mathcal{S}_a$ so that  $\lim\limits_{k \to \infty} E_{p}(\psi_k)=d_{p}(a)$, we claim that $\{\psi_k\} $ is bounded in $H^{\frac{1}{2}}(\mathbb{R}^N)$ for $2<p<2+\frac{2}{N}$.
		In fact, if $\{\psi_k\}$ is unbounded in $H^{\frac{1}{2}}(\mathbb{R}^N)$, by \eqref{eq10280938}, \eqref{eq10281212}, and \eqref{eq10281747}, we immediately derive  that
		\begin{equation*}
			0>d_{p}(a)
			= E_{p}(\psi_k)+o_k(1)
			\geq
			\frac{1}{2}T_v(\psi_k)-\frac{ma}{2}
			-C a^{\frac{p}{2}-\frac{N(p-2)}{2}}
			\left(T_v(\psi_k)\right)^{\frac{N(p-2)}{2} }+o_k(1)\to +\infty,
		\end{equation*}
		which is a contradiction.
		
		Furthermore, we prove that $\{\psi_k\}$ is bounded in $L^A(\R^N)$. By \eqref{eq10280938}, \eqref{eq10272204} and \eqref{eq102881138}, we infer that
		\begin{equation*}
			\begin{aligned}
				&d_{p}(a)= E_{p}(\psi_k)+o_k(1)\\
				&\geq \frac{1}{2}T_v(\psi_k)-\frac{ma}{2}-\frac{\widetilde{C}_p}{p}\left(T_v(\psi_k)\right)^{\frac{N(p-2)}{2} }a^{\frac{p}{2}-\frac{N(p-2)}{2} }-\frac{1}{2}\inte|\psi_k|^2\log|\psi_k|^2dx+o_k(1)\\
				&\ge-\frac{ma}{2}-\frac{\widetilde{C}_p}{p}\left((1+|v|)\|\psi_k\|_{\dot{H}^{\frac{1}{2}}(\R^N)}^2\right)^{\frac{N(p-2)}{2} }a^{\frac{p}{2}
					-\frac{N(p-2)}{2} }-\frac{1}{2}\inte|\psi_k|^2\log|\psi_k|^2dx+o_k(1).
			\end{aligned}
		\end{equation*}
		Combining the fact that $\{\psi_k\}$ is bounded in $H^{\frac{1}{2}}(\mathbb{R}^N)$, it yields that
		\begin{equation}\label{eq10291746}
			\begin{aligned}
				-\inte|\psi_k|^2\log|\psi_k|^2dx\le C+o_k(1).
			\end{aligned}
		\end{equation}
		Thus, by \eqref{eq01161709} 
		and \eqref{eq10291746}, we get that
		\begin{equation*}
			\begin{aligned}
				\inte A(|\psi_k|)dx
				=\inte (B(|\psi_k|)-|\psi_k|^2\log |\psi_k|^2)dx\leq C+o_k(1),
			\end{aligned}
		\end{equation*}
		which, together with \eqref{A10261950}, indicates that $\{\psi_k\}$ is bounded in $L^A(\R^N)$.
		
		To sum up, we conclude that $\{\psi_k\}$ is bounded in $W^{\frac{1}{2}}(\mathbb{R}^N)$.
		
		Now, let us introduce the following L\'{e}vy concentration function
		\begin{equation*}
			Q_k(r):=\sup_{y\in\R^N}\int_{B_r(y)}|\psi_k(x)|^2dx.
		\end{equation*}
		Since $\{Q_k\}$ is sequence of monotone and uniformly bounded functions, by Helly's selection theorem,  we can find a convergent subsequence, denoted again by $\{Q_k\}$, such that there is a non-decreasing function $Q(r)$ satisfying
		$$\lim_{k\to \infty}Q_k(r)=Q(r), \text{ for all } r>0.$$
		Noting that $0\le Q_k(r)\le a$, there exists $\beta\in [0,a]$ such that
		\begin{equation}\label{eq1.2}
			\lim_{r\to +\infty}Q(r)=\beta.
		\end{equation}

		Then,  we will divide the proof into two cases as follows:
		
		{\em Case 1: $\beta=0$.} Recalling the definition of $\beta$ in \eqref{eq1.2}, we have that
		\begin{equation}\label{eq14.19}
			\lim_{k \to \infty}\sup _{y\in \mathbb{R}^N}\int_{B_R(y)}|\psi_{k} |^2dx =0, \mbox{ for all }R>0, 
		\end{equation}
		which, together with Lemma \ref{lem10301100}, indicates that
		\begin{equation}\label{eq10301550}
			\lim_{k \to \infty}\|\psi_k\|_\tau^\tau =0,~~\text{ for all } 2< \tau<2N/(N-1).
		\end{equation}
		Set $\Omega_k:=\{x\in\R^N:|\psi_{k}(x)|\ge 1\}$. If $\Omega_k=\emptyset$ for infinitely many $k\ge1$, then clearly for such $k$
		\begin{equation}\label{empty}
			\inte|\psi_k|^2\log |\psi_k|^2dx\le 0.
		\end{equation}
		Otherwise, up to a subsequence, it follows from \eqref{eq10301550} that
		\begin{equation}\label{eq10301551}
			\inte|\psi_k|^2\log |\psi_k|^2dx
			\le\int_{\Omega_k} |\psi_k|^2\log |\psi_k|^2dx
			\leq C\int_{\Omega_k}|\psi_k|^{2+\frac{2}{N}}dx\leq  C\|\psi_k\|_{2+\frac{2}{N}}^{2+\frac{2}{N}}=o_k(1).
		\end{equation}
		Moreover, by \eqref{APP.C}, one has
		\begin{equation*}
			\inte\bar{\psi}(\sqrt{-\Delta+m^2 }-m +iv\cdot \nabla)  \psi dx \ge -\left(1-\sqrt{1-|v|^2} \right)ma,~~\forall\psi\in \mathcal{S}_a.
		\end{equation*}
		Combining with \eqref{eq10301550}, \eqref{empty}, \eqref{eq10301551}, and Lemma \ref{lem10281516}, this gives
		\begin{equation*}
			\begin{aligned}
				-\frac{1}{2}\left(1-\sqrt{1-|v|^2}\right )ma
				>d_{p}(a)=\lim\limits_{k \to \infty} E_{p}(\psi_k)
				>-\frac{1}{2}\left(1-\sqrt{1-|v|^2} \right)ma,
			\end{aligned}
		\end{equation*}
		which is absurd. Hence, the case $\beta=0$ does not hold.
		
		{\em Case 2: $\beta\neq0$.} In this case, for large enough $R>0$, \eqref{eq1.2} gives that $\frac{\beta}{2}<Q(R)<\frac{3\beta}{2}$. By passing a subsequence if necessary, we have that
		\begin{equation*}
			\frac{\beta}{2}\le\lim_{k \to \infty}\sup _{y\in \mathbb{R}^N}\int_{B_R(y)}|\psi_{k} |^2dx\le\frac{3\beta}{2}.
		\end{equation*}
		Then
		there exists some $\{y_k\}\subset\R^N$ such that
		\begin{equation}\label{eq1.3}
			\frac{\beta}{2}\le\lim_{k \to \infty}\int_{B_R(0)}|\psi_{k}(x+y_k)|^2dx\le\frac{3\beta}{2}.
		\end{equation}
		Noting that the fact $\{\psi_k(\cdot+y_k)\}$ is bounded in $W^{\frac{1}{2}}(\mathbb{R}^N)$, up to the subsequence, then there exists $\varphi \in W^{\frac{1}{2}}(\mathbb{R}^N)$ such that, as $k\to \infty$,
		\[
		\begin{array}{lll}
			\psi_k(\cdot+y_k)\rightharpoonup\varphi \text{ weakly in } W^{\frac{1}{2}}(\mathbb{R}^N),
			&
			&
			\psi_k(\cdot+y_k)\rightharpoonup\varphi \text{ weakly in } H^{\frac{1}{2}}(\mathbb{R}^N),\\
			\psi_k(\cdot+y_k)\to\varphi \text{ in } L_{\rm loc}^{2}(\mathbb{R}^N),
			&
			&
			\psi_k(\cdot+y_k)\to\varphi \text{ a.e. in } \mathbb{R}^N.
		\end{array}
		\]
		Therefore, from \eqref{eq1.3} we deduce that
		$\|\varphi\|_2^2\ge \frac{\beta}{2}>0.$
		
		Set $v_k:=\psi_k(\cdot+y_k)-\varphi$.
		By 
		Brezis-Lieb Lemma \ref{lem10260927}, we have that
		\begin{equation}\label{eq1.5}
			a=\|\psi_k\|_2^2=\|v_k\|_2^2+\|\varphi\|_2^2+o_k(1)
		\end{equation}
		and
		\begin{equation}\label{eq1.4}
			E_{p}(\psi_k)=E_{p}(v_k)+ E_{p}(\varphi)+o_k(1).
		\end{equation}
		
		Let $0<\beta_1:=\|\varphi\|_2^2$
		and $\delta:=a-\beta_1$. Then \eqref{eq1.5} gives that
		$$\lim_{k \to \infty}\|v_k\|_2^2=\delta\ge 0.$$
		
		If $\delta >0$, taking $\tilde{v}_k:=b_kv_k$ and $b_k:=\frac{\sqrt{\delta}}{\|v_k\|_2}$, then
		$\|\tilde{v}_k\|_2^2=\delta$ and $\lim\limits_{k\to \infty}b_k=1.$
		For $k$ big enough, one has
		\begin{equation}\label{eq1.10}
			d_p(\delta)\le E_{p}(\tilde{v}_k)=E_{p}(v_k)+o_k(1).
		\end{equation}
		Hence, from \eqref{eq1.4} and \eqref{eq1.10}, we derive that
		\begin{equation*}
			\begin{aligned}
				d_p(a)=E_{p}(\psi_k)+o_k(1)
				= E_{p}(\varphi)+E_{p}(v_k)+o_k(1)
				= E_{p}(\varphi)+E_{p}(\tilde{v}_k)+o_k(1)
				\ge d_p(\beta_1)+d_{p}(\delta),
			\end{aligned}
		\end{equation*}
		which leads to a contradiction by using 
		Lemma \ref{lem10281714}. Thus, we get $\delta =0$, that is,
		$$\lim_{k\to \infty}\|\psi_k\|_2^2=\|\varphi\|_2^2=a.$$
		Using  interpolation inequality, this means that
		\begin{equation}\label{eq10302001}
			\psi_k(\cdot +y_k)\to \varphi \text{~in}~ L^\gamma(\mathbb{R}^N),  ~\forall \gamma\in\left[2,2N/(N-1)\right).
		\end{equation}
		
		On the other hand, recalling  \cite[inequality (2.4)]{A17}
		$$\inte|B(|u|)-B(|v|)|dx\le C\left(\|u\|_{H^{\frac{1}{2}}(\mathbb{R}^N)}+\|v\|_{H^{\frac{1}{2}}(\mathbb{R}^N)}\right)^{\frac{N}{N-1}}\|u-v\|_{2},\qquad\text{for all }u,v\in H^{\frac 12}(\R^N),$$
		and using \eqref{eq10302001}, we get
		$$\begin{aligned}
			0
			&\le\lim\limits_{k \to \infty}\inte|B(|\psi_k|)-B(|\varphi|)|dx\le C\lim\limits_{k \to \infty}\left[\left(\|\psi_k\|_{H^{\frac{1}{2}}(\mathbb{R}^N)}+\|\varphi\|_{H^{\frac{1}{2}}(\mathbb{R}^N)}\right)^{\frac{N}{N-1}}
			\|\psi_k-\varphi\|_{2}\right]=0.
		\end{aligned}$$
		Consequently,
		\begin{equation}\label{eq10302008}
			\lim\limits_{k \to \infty}\inte B(|\psi_k|)dx=\inte B(|\varphi|)dx.
		\end{equation}
		Moreover, following from  Fatou's Lemma, we obtain that
		\begin{equation}\label{eq10302015}
			\inte A(|\varphi|)dx \leq  \liminf_{k\to\infty}\inte A(|\psi_k|)dx.
		\end{equation}
		Thus, from \eqref{eq10302008} and \eqref{eq10302015}, it follows 
		\begin{equation}\label{eq10302027}
			\begin{aligned}
				\limsup_{k \to \infty}\inte|\psi_k|^2\log |\psi_k|^2dx
				&=\lim\limits_{k \to \infty}\inte B(|\psi_k|)dx-\liminf\limits_{k \to \infty}\inte A(|\psi_k|)dx\\
				&\le \inte B(|\varphi|)dx-\inte A(|\varphi|)dx=\inte|\varphi|^2\log |\varphi|^2dx.
			\end{aligned}
		\end{equation}
		Then, from \eqref{eq10302001}, \eqref{eq10302027} and weakly lower semi-continuity
		of the norm, we derive that
		\begin{equation}\label{eq10302059}
			\begin{aligned}
				d_{p}(a)
				&=\lim\limits_{k \to \infty} E_{p}(\psi_k)\\
				&\ge\frac{1}{2}\mathcal{T}_{m,v}(\varphi)-\frac{m}{2}\|\varphi\|_2^2-\frac{1}{p} \|\varphi\|_p^{p}-\frac{1}{2}\inte|\varphi|^2\log|\varphi|^2dx+\frac{1}{2}\|\varphi\|_2^2= E_{p}(\varphi)\ge d_{p}(a),
			\end{aligned}
		\end{equation}
		concluding the proof.
	\end{proof}

	\subsection{Existence and nonexistence of minimizers for the case \texorpdfstring{$p=2+\frac{2}{N}$}{}} 
	The main purpose of this subsection is to establish Theorem \ref{Thm01142024} on the existence and nonexistence of minimizers to problem (\ref{eq10271528}).
	We will divide it into two subsubsections.
	
	\subsubsection{Existence of minimizers for the case $p=2+\frac{2}{N}$}
	We start with the following proposition.
	\begin{prop}
		Assume that $m>0$, $a^\ast_v>a>0$ and $p=2+\frac{2}{N}$ with $N\ge 2$. 
		Then $E_{p}$ is bounded from below on $\mathcal{S}_a$.
	\end{prop}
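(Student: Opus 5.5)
We follow the proof of Proposition \ref{prop01152155}, replacing the subcritical Gagliardo--Nirenberg inequality \eqref{eq10272204} by the sharp mass-critical inequality \eqref{eq11280935}. Fix $u\in\mathcal{S}_a$.

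For the kinetic part, the operator inequality \eqref{eq102881138} gives
\[
\frac12\inte \bar u(\sqrt{-\Delta+m^2}-m+iv\cdot\nabla)u\,dx\ge \frac12 T_v(u)-\frac{ma}{2}.
\]
For the power term, \eqref{eq11280935} with $p=2+\frac2N$ and $\|u\|_2^2=a$ yields
\[
\frac1p\|u\|_p^p\le \frac{N}{2(N+1)}\cdot\frac{N+1}{N(a_v^*)^{1/N}}\,T_v(u)\,a^{1/N}=\frac12\Big(\frac{a}{a_v^*}\Big)^{\frac1N}T_v(u).
\]
Hence the kinetic part minus the power term is bounded below by
\[
\frac12\Big(1-\big(\tfrac{a}{a_v^*}\big)^{1/N}\Big)T_v(u)-\frac{ma}{2},
\]
and the coefficient $\kappa:=\frac12\big(1-(a/a_v^*)^{1/N}\big)$ is strictly positive because $a<a_v^*$.

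For the logarithmic term, we use $s^2\log s^2=B(s)-A(s)\le B(s)$ together with \eqref{eq01161709} for some fixed $q\in(2,\frac{2N}{N-1})$, and then \eqref{eq10272204}:
\[
\frac12\inte|u|^2(\log|u|^2-1)\,dx\le \frac{C_q}{2}\|u\|_q^q-\frac a2\le C\,T_v(u)^{\frac{N(q-2)}{2}}a^{\frac{q-N(q-2)}{2}}-\frac a2.
\]
We choose $q$ close to $2$, for instance $q<2+\frac2N$, so that $\theta:=\frac{N(q-2)}{2}<1$.

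Combining the three estimates gives
\[
E_p(u)\ge \kappa\, T_v(u)-C_a\,T_v(u)^{\theta}-\frac{ma}{2}+\frac a2.
\]
Since $\kappa>0$ and $\theta<1$, the function $t\mapsto \kappa t-C_at^\theta$ is bounded below on $[0,\infty)$, so $E_p$ is bounded below on $\mathcal{S}_a$.

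The only delicate point is that the critical power term must be absorbed exactly by the sharp constant. This works precisely because the constant in \eqref{eq11280935} is expressed through $a_v^*$, and $a<a_v^*$ leaves a positive fraction of $T_v$ available to dominate the sublinear logarithmic contribution.
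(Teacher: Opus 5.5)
Your proposal is correct and follows essentially the same argument as the paper. You combine the operator inequality \eqref{eq102881138}, the sharp mass-critical inequality \eqref{eq11280935} (which leaves the positive coefficient $\frac12\bigl(1-(a/a_v^\ast)^{1/N}\bigr)$ on $T_v(u)$), and the bound $\int|u|^2\log|u|^2\le\int B(|u|)\le C_q\|u\|_q^q$ with $2<q<2+\frac2N$ followed by \eqref{eq10272204}, which gives a sublinear power of $T_v(u)$.
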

	
	\begin{proof}
		Fix $u\in \mathcal{S}_a$ and $2<q<2+\frac{2}{N}$. By using \eqref{eq01161709}, \eqref{eq10272204}, \eqref{eq11280935} and  \eqref{eq102881138}, we infer that
		\begin{equation*}
			\begin{aligned}
				E_{p}(u )
				&\geq \frac{1}{2}T_v(u)-\frac{ma}{2}-\frac{1}{2}\left(\frac{a}{a^\ast_v}\right)^{\frac 1N}T_v(u)-\frac{1}{2}\inte [B(|u|)- A(|u|)]dx+\frac{a}{2}\\
				&\ge\frac{1}{2}\left[1-\left(\frac{a}{a^\ast_v}\right)^{\frac 1N}\right]T_v(u)-\frac{C_q}{2}\|u\|_q^q-\frac{ma}{2}\\
				&\ge\frac{1}{2}\left[1-\left(\frac{a}{a^\ast_v}\right)^{\frac 1N}\right]T_v(u)-\frac{C_q \widetilde{C}_q}{2}\left(T_v(u)\right)^{\frac{N(q-2)}{2} }a^{\frac{q}{2}-\frac{N(q-2)}{2} }-\frac{ma}{2}>C,
			\end{aligned}
		\end{equation*}
		for a suitable $C\in \R$, independent on $u$, concluding the proof.
	\end{proof}
	
	Next, arguing as in Lemma \ref{lem10281516} and Lemma \ref{lem10281714}, the following properties of $d_{p}(a)$ hold.
	\begin{lem}\label{lem01141059}
		Let $a^\ast_v>a>0$, $p=2+\frac{2}{N}$ with $N\ge 2$, and $m>m_p^*(a)$. Then  there holds
		\begin{equation*}
			d_{p}(a)< -\frac{1}{2}\left(1-\sqrt{1-|v|^2}\right )ma.
		\end{equation*}
	\end{lem}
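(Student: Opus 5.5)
The argument is the one used for Lemma \ref{lem10281516}, since that proof never used the restriction $p<2+\frac2N$: the only ingredients were the Galilean-type phase modulation and the definition of $m_p^*(a)$, which is given in \eqref{mpstar} for all $2<p\le 2+\frac2N$.

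\emph{Step 1: phase modulation.} We take $v=|v|e_{x_N}$. Fix a real $\phi\in\mathcal{X}_a$ and set $\phi_\lambda(x)=e^{i\lambda v\cdot x}\phi(x)$. Since $\phi$ is real with compact support, $\int\phi\,\partial_{x_i}\phi\,dx=0$, and we get \eqref{eq10281526}. Moreover $|\phi_\lambda|=|\phi|$, so the $L^p$ and logarithmic terms are unchanged. Because $\phi\in\mathcal{X}_a$, we have $\int|\phi|^2(\log|\phi|^2-1)dx\ge0$, so this term can be dropped and we obtain \eqref{eq10281528} with $p=2+\frac2N$.

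\emph{Step 2: operator bound and optimisation in $\lambda$.} We use $\sqrt{-\Delta+m^2}\le\frac1{2\lambda}(-\Delta+m^2+\lambda^2)$. The gradient of $\phi_\lambda$ produces $\|\nabla\phi\|_2^2+\lambda^2|v|^2a$, the cross term vanishing as before. This gives \eqref{eq10281534}. Choosing $\lambda_*=m(1-|v|^2)^{-1/2}$ then yields
\[
E_p(\phi_{\lambda_*})\le -\tfrac12\bigl(1-\sqrt{1-|v|^2}\bigr)ma+\frac{\sqrt{1-|v|^2}}{4m}\|\nabla\phi\|_2^2-\frac1p\|\phi\|_p^p .
\]
Note that $\phi_{\lambda_*}\in\mathcal{S}_a$: it is smooth and compactly supported with $|\phi_{\lambda_*}|=|\phi|$, so it lies in $W^{\frac12}(\R^N)$.

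\emph{Step 3: making the remainder negative.} Since $m>m_p^*(a)$, we pick $\epsilon>0$ with $m_p^*(a)+\epsilon<m$. We then choose $\phi_\epsilon\in\mathcal{X}_a$ whose quotient in \eqref{mpstar} is below $m_p^*(a)+\epsilon$. This gives $\tilde E_p(\phi_\epsilon)<\frac{\|\phi_\epsilon\|_p^p}{p}\bigl(\frac{m_p^*(a)+\epsilon}{m}-1\bigr)<0$, and hence the strict inequality $d_p(a)\le E_p((\phi_\epsilon)_{\lambda_*})<-\frac12(1-\sqrt{1-|v|^2})ma$.

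\emph{Expected obstacles.} There is essentially no obstacle: the hypothesis $a<a^*_v$ is not needed for the upper bound, only for the finiteness of $d_p(a)$. The one point to watch is that $\mathcal{X}_a\neq\emptyset$, which holds by the scaling argument in the introduction, so the infimum in \eqref{mpstar} is finite and attained approximately.
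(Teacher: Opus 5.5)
Your proposal is correct and follows the paper's route: the paper proves this lemma by repeating the proof of Lemma \ref{lem10281516}, namely the phase modulation $e^{i\lambda v\cdot x}\phi$ with $\phi\in\mathcal{X}_a$, the bound $\sqrt{-\Delta+m^2}\le\frac{1}{2\lambda}(-\Delta+m^2+\lambda^2)$, the choice $\lambda_*=m(1-|v|^2)^{-1/2}$, and a near-optimal $\phi_\epsilon$ in \eqref{mpstar}, and that argument indeed never uses $p<2+\frac2N$. Your remark that the hypothesis $a<a^\ast_v$ plays no role in this upper bound is also accurate.
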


	\begin{lem}\label{lem01141535}
		Under the hypotheses of Lemma \ref{lem01141059}, we have that
		\begin{equation*}
			d_{p}(a)<d_{p}(\lambda )+d_{p}(a-\lambda ), \text{ where } 0<\lambda<a. 
		\end{equation*} 
		Moreover, the functional $d_{p}(t)$ is strictly decreasing and continuous with respect to $t\in(0,a]$.
	\end{lem}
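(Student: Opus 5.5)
We follow the three steps of Lemma \ref{lem10281714}, with the mass-critical bound replacing \eqref{eq10281212}. As there, we work with the rescaled problem $d_p^{t}(1)$ defined by \eqref{eq10281724}–\eqref{eq01142202}, now with $p=2+\frac2N$, and use $d_p(t)=t\,d_p^{t}(1)$ from \eqref{eq10282013}.

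\emph{Step 1 (monotonicity).} Take $0<t_1<t_2\le a<a^\ast_v$. For every $\psi\in\mathcal S_1$,
\[
{E}_p^{t_1}(\psi)-{E}_p^{t_2}(\psi)=\frac{t_2^{1/N}-t_1^{1/N}}{p}\|\psi\|_p^p+\frac{\log t_2-\log t_1}{2}>0 .
\]
Passing to the infimum along a minimizing sequence for $d_p^{t_1}(1)$ gives $d_p^{t_1}(1)> d_p^{t_2}(1)$. Finiteness of these infima for $t\le a<a^\ast_v$ follows from the preceding Proposition applied on $\mathcal S_t$, since $d_p^t(1)=d_p(t)/t$. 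Lemma \ref{lem01141059} gives $d_p^{t}(1)<0$, and the hypothesis $m>m_p^*(a)$ is needed here. Then
\[
d_p(t_1)=t_1d_p^{t_1}(1)>t_2d_p^{t_1}(1)>t_2d_p^{t_2}(1)=d_p(t_2).
\]

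\emph{Step 2 (strict subadditivity).} Let $0<M<a$ and $1<\theta\le a/M$. Step 1 gives
\[
d_p(\theta M)=\theta M d_p^{\theta M}(1)<\theta M d_p^{M}(1)=\theta d_p(M).
\]
Lions' lemma \cite[Lemma II.1]{Lion} then yields $d_p(a)<d_p(\lambda)+d_p(a-\lambda)$ for $0<\lambda<a$.

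\emph{Step 3 (continuity).} This is the only step that differs from the subcritical case, and it is where I expect the main obstacle. The uniform $H^{\frac12}$ bound on almost-minimizers can no longer come from the sublinear power $T_v^{N(p-2)/2}$ with $N(p-2)/2<1$, because here $N(p-2)/2=1$.

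Fix $t_1<t_2\le a$ and choose $\psi\in\mathcal S_1$ with ${E}_p^{t_2}(\psi)<d_p^{t_2}(1)+(t_2-t_1)<a$. Apply \eqref{eq11280935} to the $L^{p}$ term with the coefficient $t_2^{1/N}\le a^{1/N}$. Apply \eqref{eq01161709} together with \eqref{eq10272204} for some fixed $2<q<2+\frac2N$ to the logarithmic term. This gives
\[
{E}_p^{t_2}(\psi)\ge \frac12\Bigl[1-\Bigl(\frac{a}{a^\ast_v}\Bigr)^{1/N}\Bigr]T_v(\psi)-C\,T_v(\psi)^{\frac{N(q-2)}{2}}-\frac m2-\frac{\log a}{2},
\]
with $C$ independent of $t_1,t_2$. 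Since $a<a^\ast_v$, the coefficient of $T_v(\psi)$ is strictly positive, and $\frac{N(q-2)}{2}<1$. Hence $T_v(\psi)$, and therefore $\|\psi\|_p^p$ by \eqref{eq11280935}, is bounded uniformly in $t_1,t_2$.

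Finally, exactly as in Lemma \ref{lem10281714},
\[
0<d_p^{t_1}(1)-d_p^{t_2}(1)\le C_1\bigl(t_2^{1/N}-t_1^{1/N}\bigr)+\frac{\log t_2-\log t_1}{2}+t_2-t_1 ,
\]
which tends to $0$ as $t_2-t_1\to0$. This gives continuity of $t\mapsto d_p^t(1)$, hence of $d_p(t)=t\,d_p^t(1)$ on $(0,a]$.
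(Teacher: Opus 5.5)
You follow the paper's route exactly; the paper proves this lemma by referring back to Lemma \ref{lem10281714}. Your Step 3 also correctly supplies the critical-case $H^{\frac12}$ bound via \eqref{eq11280935} and $a<a^\ast_v$, which the paper leaves implicit. The gap is in Step 1, and it is inherited from the paper. Lemma \ref{lem01141059} gives $d_p(t)<0$ only at masses $t$ with $m>m_p^*(t)$, whereas you only know $m>m_p^*(a)$. The lower bound in the proof of Lemma \ref{le:m*} shows $m_p^*(t)\to+\infty$ as $t\to0^+$, so this does not propagate to $t<a$.

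In fact the monotonicity claim is false. Keep $-\frac{\log t}{2}$ in your Step 3 estimate instead of bounding it by $-\frac{\log a}{2}$. This gives a constant $K$, independent of $t\in(0,a]$, with $E_p^t(\psi)\ge -K-\frac{\log t}{2}$ for all $\psi\in\mathcal S_1$. Hence
\[
d_p(t)=t\,d_p^t(1)\ge \frac t2\bigl(-2K-\log t\bigr)>0\qquad\text{for } 0<t<e^{-2K}.
\]
On the other hand, $d_p(t)\le E_p(\sqrt t\,\psi)=tE_p^t(\psi)\to0$ as $t\to0^+$ for any fixed $\psi\in\mathcal S_1$. A strictly decreasing function on $(0,a]$ would satisfy $d_p(t)<\lim_{s\to0^+}d_p(s)=0$. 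So $t\mapsto d_p(t)$ is not strictly decreasing on $(0,a]$. What your first display actually proves is that $t\mapsto d_p^t(1)=d_p(t)/t$ is strictly decreasing, thanks to the uniform gap $\frac{\log t_2-\log t_1}{2}$. The step $t_1d_p^{t_1}(1)>t_2d_p^{t_1}(1)$ needs a sign you do not have.

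That weaker fact is all the rest of the lemma needs. Step 2 uses only $d_p^{\theta M}(1)<d_p^{M}(1)$ and $\theta M>0$, so strict subadditivity stands as you wrote it. In Step 3, the inequality $d_p^{t_2}(1)+(t_2-t_1)<a$ again tacitly uses $d_p^{t_2}(1)<0$. Replace it by $d_p^{t_2}(1)\le d_p^{\tau}(1)$ for $t_1,t_2\in[\tau,a]$. This gives an $H^{\frac12}$ bound uniform on compact subintervals, hence continuity of $t\mapsto d_p^t(1)$ and of $d_p(t)=t\,d_p^t(1)$ on $(0,a]$. The claim that $d_p$ itself is strictly decreasing has to be dropped, or restated for $d_p(t)/t$.
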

	
	\begin{proof}[Proof of Theorem \ref{Thm01142024}-\eqref{E}:]
		Repeating similar arguments as Theorem \ref{Thm10291145}, by using Lemma \ref{lem01141059} and Lemma \ref{lem01141535},  the desired conclusion follows. Since the proof is standard, here we omit it for simplicity.
	\end{proof}
	
	\subsubsection{Nonexistence of minimizers for the case $p=2+\frac{2}{N}$}
	Now, we prove the nonexistence of minimizers to problem (\ref{eq10271528}).
	
	\begin{proof}[Proof of Theorem \ref{Thm01142024}-\eqref{NE}]
		Let us define the 
		test function
		\begin{equation*}
			\phi_\tau:=\frac{\sqrt{a}\tau^{\frac{N}{2}}}{\|Q_v\|_2}Q_v(\tau \cdot), ~\tau>0,
		\end{equation*}
		where $Q_v\in H^{\frac{1}{2} }(\mathbb{R}^N)$ optimizes the inequality (\ref{eq11280935}) and satisfies equation (\ref{eq11280944}).

		Since $\phi_\tau \in \mathcal{S}_a$, by using \eqref{eq102881138} and \eqref{eq01071942}, direct calculations give that
		\begin{equation}\label{eq01071544}
			\begin{aligned}
				d_{p}(a)
				&\le E_{p}(\phi_\tau)
				\le\frac{1}{2}T_v(\phi_\tau)-\frac{N}{2N+2} \|\phi_\tau\|_{2+\frac{2}{N}}^{2+\frac{2}{N}}-\frac{1}{2}\inte|\phi_\tau|^2(\log|\phi_\tau|^2-1)dx \\
				&=\frac{\tau}{2}\frac{a}{a^\ast_v}T_v(Q_v)-\frac{N\tau}{2N+2}\left(\frac{a}{a^\ast_v}\right)^{\frac{N+1}{N}} \|Q_v\|_{2+\frac{2}{N}}^{2+\frac{2}{N}}-\frac{a}{2a^\ast_v}\inte|Q_v|^2\log\left(\frac{a\tau^N}{a^\ast_v}|Q_v|^2\right)dx+\frac{a}{2} \\
				&=\frac{N\tau a}{2}\left[1-\left(\frac{a}{a^\ast_v}\right)^{\frac{1}{N}}\right]-\frac{a}{2}\log\frac{a\tau^N}{a^\ast_v}
				-\frac{a}{2a^\ast_v}\inte|Q_v|^2\log|Q_v|^2dx+\frac{a}{2},
			\end{aligned}
		\end{equation}
		which, together with \eqref{eq10280849}, implies that, for any $a\ge a^\ast_v$, 
		\begin{equation*}
			\begin{aligned}
				d_{p}(a)
				&\le-\frac{a}{2}\log\frac{a}{a^\ast_v}-\frac{a}{2}\log\tau^N+C+\frac{a}{2}\to-\infty,~ as~ \tau\to +\infty.
			\end{aligned}
		\end{equation*}
		Hence, problem (\ref{eq10271528}) has no minimizers.
	\end{proof}
	
	\subsection{Nonexistence of minimizers for the case \texorpdfstring{$2+\frac{2}{N}<p\le \frac{2N}{N-1}$}{}} In this section, Theorem \ref{Thm1.3} is established by taking suitable test functions.
	
	\begin{proof}[Proof of Theorem \ref{Thm1.3}:]
		For any $u\in \mathcal{S}_a$, we let $u_t:=t^{\frac{N}{2}}u(t\cdot)$ with $t>0$. Then it follows from \eqref{eq102881138} that
		\begin{equation*}
			\begin{aligned}
				E_{p}(u_t)
				&\le \frac{1}{2}T_v(u_t)-\frac{1}{p} \|u_t\|_p^{p}-\frac{1}{2}\inte|u_t|^2(\log|u_t|^2-1)dx \\
				&= \frac{t}{2}T_v(u)-\frac{1}{p}t^{\frac{Np}{2}-N} \|u\|_p^{p}-\frac{Na}{2}\log t-\frac{1}{2}\inte|u|^2\log|u|^2dx +\frac{a}{2}\\
				&\to -\infty, \text{ as } t\to +\infty,
			\end{aligned}
		\end{equation*}
		where the fact that $\frac{Np}{2}-N>1$ is used. Thus, there exist no minimizers related to
		$d_p(a)$.
	\end{proof}
	
	\section{Blow-up analysis of minimizers}\label{sec5}
	In this section, taking in account Remark \ref{rem1.2}, we focus on showing the blow-up behaviour of the minimizers  as $a\nearrow a^\ast_v$ when $p=2+\frac{2}{N}$ and $m>0$. We start with the following estimate.
	
	\begin{lem}\label{lem01190906}
		Let $u_a$ be a minimizer corresponding to $d_p(a)$. Then, as $a\nearrow a^\ast_v$, we have that
		\begin{equation}\label{eq01191008}
			d_{p}(a)\to-\infty,
		\end{equation}
		\begin{equation}\label{eq01151218}
			\epsilon_ad_{p}(a)\to 0,   
		\end{equation}
		\begin{equation}\label{eq01151548}
			\epsilon_a\inte|u_a|^2\log|u_a|^2dx \to 0, 
		\end{equation}
		and
		\begin{equation}\label{eq01151545}
			\epsilon_a \|u_a\|_{2+\frac{2}{N}}^{2+\frac{2}{N}}\to \frac{N+1}{N}, 
		\end{equation}
		where
		\begin{equation}\label{eq01151059}
			\epsilon_a:=\left(T_v(u_a)\right)^{-1}\to 0^+.  
		\end{equation}
	\end{lem}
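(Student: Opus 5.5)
We first establish \eqref{eq01191008}. The test function computation \eqref{eq01071544} gives, for every $\tau>0$,
\[
d_p(a)\le \frac{N\tau a}{2}\Big[1-\Big(\frac{a}{a^\ast_v}\Big)^{\frac1N}\Big]-\frac{Na}{2}\log\tau+C,
\]
with $C$ uniform for $a$ near $a^\ast_v$, by \eqref{eq10280849}. We set $\delta_a:=1-(a/a^\ast_v)^{1/N}\to0^+$ and optimize in $\tau$ by taking $\tau=\delta_a^{-1}$. This yields
\[
d_p(a)\le \frac{Na}{2}\log\delta_a+C\to-\infty .
\]
For the matching lower bound we use \eqref{eq102881138}, \eqref{eq11280935}, \eqref{eq01161709} and \eqref{eq10272204} with some fixed $q\in(2,2+\frac2N)$. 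This gives
\[
E_p(u)\ge \frac{\delta_a}{2}T_v(u)-C\,T_v(u)^{\frac{N(q-2)}{2}}-\frac{ma}{2}.
\]
Writing $\theta:=\frac{N(q-2)}{2}\in(0,1)$, this lower bound is bounded below by $-C\delta_a^{-\theta/(1-\theta)}$.

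Next we show $\epsilon_a\to0$. Suppose instead that $T_v(u_a)$ stayed bounded along a subsequence. Then the Gagliardo--Nirenberg bounds would control $\|u_a\|_{2+2/N}$ and $\int B(|u_a|)$, and since $A\ge0$, $E_p(u_a)$ would be bounded below. This contradicts $d_p(a)\to-\infty$, so $T_v(u_a)\to\infty$.

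Now we prove \eqref{eq01151218}. The upper bound is $\epsilon_a d_p(a)\le C\epsilon_a\log\delta_a+C\epsilon_a$, and it does not by itself give a limit. We therefore split the energy as
\[
d_p(a)=\frac12\Big[T_v(u_a)-\frac{N}{N+1}\|u_a\|_{2+\frac2N}^{2+\frac2N}\Big]+R_a,
\]
where $R_a$ collects the mass term $-\frac m2 a$ and the correction from \eqref{eq102881138}, together with the logarithmic terms. The bracket is $\ge \delta_aT_v(u_a)\ge0$ by \eqref{eq11280935}. By \eqref{eq01161709} and \eqref{eq10272204} we have
\[
\int |u_a|^2\log|u_a|^2\le C\,T_v(u_a)^\theta ,
\]
so $\epsilon_a R_a\ge -C\epsilon_a^{1-\theta}-C\epsilon_a\to0$. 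Hence $\liminf\epsilon_a d_p(a)\ge0$. For the upper bound we need $\limsup\epsilon_a d_p(a)\le0$, which follows from $d_p(a)\to-\infty$ once we know $d_p(a)<0$ eventually. Thus $\epsilon_a d_p(a)\to0$.

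The remaining two limits follow from this. We get $0\le\epsilon_a\cdot(\text{bracket})\le \epsilon_a d_p(a)-\epsilon_a R_a$. For the log term, $\epsilon_a\int|u_a|^2\log|u_a|^2$ is $\le C\epsilon_a^{1-\theta}\to0$ from above. From below we use the minimizing property: in $E_p(u_a)\le 0$ the nonnegative bracket gives $-\frac12\int|u_a|^2\log|u_a|^2\le d_p(a)+C\le C$, so $\epsilon_a$ times it tends to $0$ too. This is \eqref{eq01151548}. Then $\epsilon_a R_a\to0$, so $\epsilon_a\cdot(\text{bracket})\to0$, that is, $1-\frac{N}{N+1}\epsilon_a\|u_a\|_{2+2/N}^{2+2/N}\to0$, which is \eqref{eq01151545}.

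The main obstacle is the two-sided control of the logarithmic term uniformly as $a\nearrow a^\ast_v$. We handle it by splitting into $B$ (controlled by subcritical Gagliardo--Nirenberg with exponent $\theta<1$) and $A\ge0$ (controlled through the minimality $E_p(u_a)=d_p(a)<0$ together with the nonnegativity of the Gagliardo--Nirenberg gap).
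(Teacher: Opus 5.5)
Your proposal is correct and follows essentially the paper's argument. You use the same $Q_v$-based test function for $d_p(a)\to-\infty$, taking $\tau=\delta_a^{-1}$ instead of the paper's $\delta_a^{-1/2}$. You use the same splitting of the energy into the nonnegative Gagliardo--Nirenberg gap $T_v(u_a)-\frac{N}{N+1}\|u_a\|_{2+\frac2N}^{2+\frac2N}\ge\delta_aT_v(u_a)$ plus a logarithmic remainder, bounded above by $C\,T_v(u_a)^{\theta}$ and below via the minimality inequality. Your contradiction argument for $T_v(u_a)\to\infty$ merely repackages the paper's route through $\int|u_a|^2\log|u_a|^2\to+\infty$. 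Two cosmetic points: the inequality $\epsilon_a\cdot(\text{bracket})\le\epsilon_a d_p(a)-\epsilon_aR_a$ is missing a factor $2$ (harmless, since the right side tends to $0$), and the lower bound $-C\delta_a^{-\theta/(1-\theta)}$ is never used.
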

	\begin{proof}
		Recalling \eqref{eq10280849} and \eqref{eq01071544}, by taking $\tau:=\left[1-\left(\frac{a}{a^\ast_v}\right)^{\frac{1}{N}}\right]^{-\frac{1}{2}}$, we deduce that
		\begin{equation*}
			\begin{aligned}
				d_{p}(a)
				&\le
				\frac{N a}{2}\left[1-\left(\frac{a}{a^\ast_v}\right)^{\frac{1}{N}}\right]^{\frac{1}{2}}-\frac{a}{2}\log\frac{a}{a^\ast_v}-\frac{a}{2}N\log\left[1-\left(\frac{a}{a^\ast_v}\right)^{\frac{1}{N}}\right]^{-\frac{1}{2}}-\frac{a}{2a^\ast_v}\inte|Q_v|^2\log|Q_v|^2dx+\frac{a}{2}\\
				&\to -\infty, ~as~a\nearrow a^\ast_v.
			\end{aligned}
		\end{equation*}
		Moreover, from \eqref{eq10280938}, \eqref{eq11280935}, and \eqref{eq102881138}, it follows that
		\begin{equation*}
			\begin{aligned}
				d_{p}(a)
				&= E_{p}(u_a)
				\ge \frac{1}{2}\left[1-\left(\frac{a}{a^\ast_v}\right)^{\frac 1N}\right]T_v(u_a)-\frac{ma}{2}
				-\frac{1}{2}\inte|u_a|^2\log|u_a|^2dx+\frac{a}{2}\\
				&\ge-\frac{ma}{2}-\frac{1}{2}\inte|u_a|^2\log|u_a|^2dx,
			\end{aligned}
		\end{equation*}
		which, combined with \eqref{eq01191008}, implies that
		\begin{equation}\label{eq01151057}
			\inte|u_a|^2\log|u_a|^2dx\ge 2\left[-\frac{ma}{2}-d_{p}(a)\right]\to +\infty, \,as\,\, a\nearrow a^\ast_v.
		\end{equation}
		On the other hand, if we set 
		$\Omega_a:=\{x\in \R^N : |u_a(x)|\ge1\}$, then by \eqref{eq01151057}, for $a$ sufficiently close to $a_v^*$, we have $|\Omega_a|\neq0$. Thus
		for any $2<q<2+\frac{2}{N}$, we deduce from \eqref{eq10272204} that there exists a constant $C_q>0$, such that
		\begin{equation}\label{eq01151141}
			\begin{aligned}
				\inte|u_a|^2\log|u_a|^2dx
				&\le\int_{\Omega_a}|u_a|^2\log|u_a|^2dx\\
				&\le C_q\int_{\Omega_a}|u_a|^{q}dx \le C_q\|u_a\|_q^{q}\le C_q\widetilde{C}_q\left(T_v(u_a)\right)^{\frac{N(q-2)}{2} }a^{\frac{q}{2}-\frac{N(q-2)}{2}},
			\end{aligned}
		\end{equation}
		which, together with \eqref{eq01151057}, implies
		\begin{equation}\label{eq01151058}
			T_v(u_a)
			\to +\infty, \,as\,\, a\nearrow a^\ast_v
		\end{equation}
		and so we have \eqref{eq01151059}.
		
		Next, we prove \eqref{eq01151218}.
		Clearly, from \eqref{eq01191008} 
		and \eqref{eq01151058}, we obtain that 
		$$\limsup\limits_{a\nearrow a^\ast_v} \epsilon_ad_{p}(a)\le 0.$$
		To complete the proof it suffices to obtain $\liminf\limits_{a\nearrow a^\ast_v} \epsilon_ad_{p}(a)\ge 0$.
		From \eqref{eq11280935}, \eqref{eq102881138} and \eqref{eq01151059}, we infer that
		\begin{equation*}
			\begin{aligned}
				&\epsilon_a\left[\frac{1}{2}\int_{\mathbb{R}^N}\bar{u}_a(\sqrt{-\Delta+m^2 }-m+iv\cdot \nabla )  u_a dx
				-\frac{N}{2N+2} \|u_a\|_{2+\frac{2}{N}}^{2+\frac{2}{N}}\right]\\
				&\ge \epsilon_a\left[\frac{1}{2}\left(1-\left(\frac{a}{a^\ast_v}\right)^{\frac 1N}\right)T_v(u_a)-\frac{ma}{2}\right]=\frac{1}{2}\left[1-\left(\frac{a}{a^\ast_v}\right)^{\frac 1N}\right]-\epsilon_a\frac{ma}{2}\to 0, \,\,\,\text{as}\,\, a\nearrow a^\ast_v.
			\end{aligned}
		\end{equation*}
		Moreover, using \eqref{eq01151057}, \eqref{eq01151141}, \eqref{eq01151058} and $2<q<2+\frac{2}{N}$, we have that
		\begin{equation*}
			\begin{aligned}
				0<\frac{\epsilon_a}{2}\inte|u_a|^2\log|u_a|^2dx
				&\le\frac{\epsilon_a}{2} C_q\widetilde{C}_q\left(T_v(u_a)\right)^{\frac{N(q-2)}{2} }a^{\frac{q}{2}-\frac{N(q-2)}{2} }\\
				&=\frac{C_q\widetilde{C}_q}{2} \left(T_v(u_a)\right)^{\frac{N(q-2)}{2}-1 }a^{\frac{q}{2}-\frac{N(q-2)}{2} }\to 0, \,\,\,\text{as}\,\, a\nearrow a^\ast_v
			\end{aligned}
		\end{equation*}
		and so we can easily conclude obtaining \eqref{eq01151218} and \eqref{eq01151548}.
		
		Finally, we establish \eqref{eq01151545}.
		Combining \eqref{eq01151218} and \eqref{eq01151548}
		one can see that, as $a\nearrow a^\ast_v$,
		\begin{equation}\label{eq01151538}
			\begin{aligned}
				&\epsilon_a\left(\frac{1}{2}\int_{\mathbb{R}^N}\bar{u}_a(\sqrt{-\Delta+m^2 }-m +iv\cdot \nabla)  u_a dx-\frac{N}{2N+2} \|u_a\|_{2+\frac{2}{N}}^{2+\frac{2}{N}}\right)
				\to 0.
			\end{aligned}
		\end{equation}
		By using \eqref{eq102881138}, we deduce that
		\begin{equation*}
			\begin{aligned}
				T_v(u_a)-ma
				\le\int_{\mathbb{R}^N}\bar{u}_a(\sqrt{-\Delta+m^2 }-m+iv\cdot \nabla)  u_a dx
				\le T_v(u_a),
			\end{aligned}
		\end{equation*}
		which, combined with \eqref{eq01151059}, yields that
		\begin{equation}\label{eq01151123}
			\epsilon_a\int_{\mathbb{R}^N}\bar{u}_a(\sqrt{-\Delta+m^2 }-m +iv\cdot \nabla  )  u_a dx\to 1,
			\,as\,\, a\nearrow a^\ast_v.
		\end{equation}
		Then, from \eqref{eq01151538} and \eqref{eq01151123}, it  follows \eqref{eq01151545} and so the proof of Lemma \ref{lem01190906} is completed.
	\end{proof}

	\begin{rem}
		In fact, recalling the proof of the property \eqref{eq01191008},  we note that $\lim\limits_{a\nearrow a^\ast_v}d_{p}(a)=-\infty$ holds for any $m>0$.
	\end{rem}

	Based on the above precise estimates, we now prove Theorem \ref{Thm01152021}, namely, the blow-up behavior of the minimizers to problem (\ref{eq10271528}) as $a\nearrow a^\ast_v$. In the following, for simplicity, we write  $a\nearrow a^\ast_v$ instead of  $a_n\nearrow a^\ast_v$, eventually up to a subsequence.
	
	\begin{proof}[Proof of Theorem \ref{Thm01152021}]
		Let
		$$\tilde{\omega}_a:=\epsilon_a^{\frac{N}{2}}u_a(\epsilon_a\cdot),$$
		and observe that 
		\begin{equation}\label{L2n}
			\|\tilde{\omega}_a\|_2^2=\|u_a\|_2^2=a.
		\end{equation} 
		Moreover, from \eqref{eq01151545}, it follows  that
		\begin{equation}\label{eq01151612}
			\lim_{ a\nearrow a^\ast_v}\|\tilde{\omega}_a\|_{2+\frac{2}{N}}^{2+\frac{2}{N}}=\lim_{ a\nearrow a^\ast_v}\epsilon_a \|u_a\|_{2+\frac{2}{N}}^{2+\frac{2}{N}}= \frac{N+1}{N},
		\end{equation}
		and, using \eqref{eq01151059}, we further obtain
		\begin{equation*}
			T_v(\tilde{\omega}_a)=\epsilon_aT_v(u_a)=1,
		\end{equation*}
		which, together with \eqref{eq10280938}, implies that the sequence $\{\tilde{\omega}_a\}$ is bounded in $H^{\frac12}(\R^N)$. Then, by Lemma \ref{lem10301100}, \eqref{L2n}, and \eqref{eq01151612}, there exists a sequence $\{y_{a}\}\subset\mathbb{R}^N$ such that
		\begin{equation}\label{eq01151652}
			\lim\limits_{a\nearrow a^\ast_v}\int_{B_{1}(y_{a})}|\tilde{{\omega}}_{a} |^2dx>0.
		\end{equation}

		Set
		\begin{equation}\label{eq01151659}
			\omega_{a}:=\tilde{{\omega}}_{a} (\cdot+y_{a})=\epsilon _{a}^{\frac{N}{2} }u_{a}\left(\epsilon_{a}(\cdot+y_{a})\right).
		\end{equation}
		From (\ref{eq01151652}) we derive that
		\begin{equation}\label{eq01151660}
			\lim\limits_{a\nearrow a^\ast_v}\int_{B_{1}(0)}|{{\omega}}_{a} |^2dx>0.
		\end{equation}
		On one hand, since $\{\tilde{{\omega}}_{a}\}$ is bounded in $H^{\frac{1}{2}}(\mathbb{R}^N)$, then $\{\omega_{a}\}$ is bounded in $H^{\frac{1}{2}}(\mathbb{R}^N)$ and so, up to a subsequence, there exists $\omega_0\in H^{\frac{1}{2}}(\mathbb{R}^N)\setminus\{0\}$
		such that
		\begin{equation*}
			\omega_{a}\overset{}\rightharpoonup\omega_0 \text{~weakly~in} ~ H^{\frac{1}{2}}(\mathbb{R}^N)\,\, \text{as}\,\,a\nearrow a^\ast_v.
		\end{equation*}
		Then, from \eqref{eq01151660} and Fatou's lemma, we infer that
		\begin{equation}\label{eq01151728}
			\begin{aligned}
				0< \lim\limits_{a\nearrow a^\ast_v}\int_{B_1(0)}|{{\omega}}_{a} |^2dx=\int_{B_1(0)}|\omega_0|^2dx
				\leq\|{\omega}_0\|_2^{2}\leq\lim\limits_{a\nearrow a^\ast_v}\|{\omega}_{a}\|_2^{2}=a^\ast_v.
			\end{aligned}
		\end{equation}
		On the other hand, since $u_a$ is a minimizer of  problem \eqref{eq10271528}, it solves the following Euler-Lagrange equation
		\begin{equation}\label{eq01150823}
			(\sqrt{-\Delta+m^2 }-m+iv\cdot \nabla )u_a =\lambda_a u_a+ u_a\log|u_a|^2+|u_a|^{\frac2N}u_a,
		\end{equation}
		where the Lagrange multiplier $\lambda_a\in \R$ satisfies
		\begin{equation}\label{eq01150830}
			a\lambda_a=2d_{p}(a)- a
			-\frac{1}{N+1} \|u_a\|_{2+\frac{2}{N}}^{2+\frac{2}{N}}.
		\end{equation}
		By \eqref{eq01150830} and Lemma \ref{lem01190906}, we deduce that
		\begin{equation}\label{eq01151600}
			\epsilon_a\lambda_a\to -\frac{1}{ N a^\ast_v}, \,\,\text{as}\,\, a\nearrow a^\ast_v.
		\end{equation}
		In addition, using \eqref{eq01151659} and \eqref{eq01150823}, we notice that ${\omega}_{a}$ satisfies
		\begin{equation}\label{eq01151729}
			\left(\sqrt{-\Delta+(\epsilon_{a}m)^2 }-\epsilon_{a}m+ iv\cdot \nabla \right)  \omega_{a}=
			\epsilon_{a}\lambda_a\omega_{a}+\epsilon_{a}\omega_{a}\log|\epsilon_{a}^{-\frac N2}\omega_{a}|^2
			+|\omega_{a}|^{\frac{2}{N}}\omega_{a}.
		\end{equation}
		Since
		$$\inte \left|\omega_a \log|\omega_{a}|^2\varphi\right|dx
		\le C \inte \left(|\omega_a|^{\frac12}+|\omega_a|^2 \right)|\varphi|dx \le C, \,\, \varphi\in C_c^\infty(\R^N),$$ 
		by \eqref{eq01151059}, \eqref{eq01151600}, and the fact that  $\omega_{a}\overset{}\rightharpoonup\omega_0$
		weakly in $ H^{\frac{1}{2}}(\mathbb{R}^N)$, we get that $\omega_0$ satisfies
		\begin{equation}\label{eq01151730}
			(\sqrt{-\Delta} +iv\cdot \nabla )\omega_0= |\omega_0|^{\frac{2}{N} } \omega_0-\frac{1}{Na^\ast_v}\omega_0.
		\end{equation}
		Then the Pohozaev identity
		(see \cite[Lemma A.3]{BGLV19}) gives
		\begin{equation*}
			T_v(\omega_0)
			=\frac{1}{a^\ast_v}\|\omega_0\|_2^2=\frac{N}{N+1}\|\omega_0\|_{2+\frac2N}^{2+\frac2N}
		\end{equation*}
		and so, by (\ref{eq11280935}) and  (\ref{eq01151728}), it follows that
		\begin{equation*}
			\left({a^\ast_v}\right)^{\frac1N}
			=\frac{\|\omega_0\|_{2+\frac2N}^{2+\frac2N}}{\frac{N+1}{N\left(a^{\ast}_v\right)^{\frac1N}}T_v(\omega_0)}
			\leq\|\omega_0\|_2^{\frac2N}
			\leq\left({a^\ast_v}\right)^{\frac1N},
		\end{equation*}
		which indicates that $\omega_0$ optimizes the Gagliardo-Nirenberg inequality (\ref{eq11280935}) and $\omega_{a}$ converges to $\omega_0$
		strongly in $L^2({\mathbb{R}^N})$ as $a\nearrow a^\ast_v$. According to the interpolation inequality and Sobolev embedding theorem,
		we further have that
		$$\omega_{a}\overset{}\rightarrow\omega_0 \text{~in~} L^\gamma(\mathbb{R}^N) \text{ as } a\nearrow a^\ast_v , ~~\forall 2\le \gamma<2N/(N-1).$$
		Thus, by (\ref{eq01151729}), (\ref{eq01151730}) and \eqref{eq01151548}, we obtain that
		\begin{equation}\label{eq01151748}
			\lim_{a\nearrow a^\ast_v}\mathcal{T}_{\epsilon_{a}m}(\omega_{a})
			=T_v(\omega_0).
		\end{equation}
		Moreover, by \eqref{eq01151059}, we have that, as $a\nearrow a^\ast_v$,
		\begin{align*}
			0\le \int_{\mathbb{R}^N}\bar{\omega}_{a}\left(\sqrt{-\Delta+(\epsilon_{a}m)^2 }-\sqrt{-\Delta } \right)  \omega_{a}dx
			&=\int_{\mathbb{R}^N}\left(\sqrt{{|\xi |}^2+(\epsilon_{a}m)^2 }-{|\xi|}\right)|\mathcal{F}\left[{\omega}_{a}\right]|^2 d\xi\\
			&\leq\epsilon_{a}m\int_{\mathbb{R}^N}\left|\mathcal{F}\left[{\omega}_{a}\right ]\right|^2d\xi
			=\epsilon_{a}m\|{\omega}_{a}\|_2^2=\epsilon_{a}ma\to 0,
		\end{align*}
		which, together with \eqref{eq01151748}, yields that
		$$\lim_{a\nearrow a^\ast_v}T_v(\omega_a)=T_v(\omega_0).$$
		Hence,
		$$\omega_{a}\overset{}\rightarrow \omega_0 \text{~in} ~ H^{\frac{1}{2}}(\mathbb{R}^N), \text{ as }a\nearrow a^\ast_v.$$
		Let
		\begin{equation*}
			Q_0:={(Na^{\ast}_v)}^{\frac N2}\omega_0(Na^{\ast}_v\cdot).
		\end{equation*}
		Then $Q_0$ optimizes the Gagliardo-Nirenberg inequality (\ref{eq11280935}) and satisfies equation (\ref{eq11280944}).
		Furthermore, since $\|\omega_0\|_2^2=a^\ast_v$, we have that $\|Q_0\|_2^2=
		a^{\ast}_v$. Therefore,
		$$\epsilon _{a}^{\frac{N}{2} }u_{a}\left(\epsilon_{a}(\cdot+y_{a})\right)=\omega_{a}\overset{}\rightarrow \omega_0={(N\|Q_0\|_2^2)}^{-\frac N2}{Q_0\left(\frac{\cdot}{N\|Q_0\|_2^2}\right)}
		\text{~in~} H^{\frac{1}{2}}(\mathbb{R}^N), \text{ as }a\nearrow a^\ast_v,$$
		completing the proof.
	\end{proof}

	\section{Asymptotic behavior of minimizers}\label{sec6}
	This section aims to establish Theorem \ref{thm14.9}, that is, the limiting profiles of minimizers of problem (\ref{eq10271528}) for $a_n$ when $a_n$ tends to some $a_0$.

	\begin{proof}[Proof of Theorem \ref{thm14.9}]
		Here we only treat the case $2<p<2+\frac{2}{N}$ since the other one is similar.
		Let $\{a_n\}\subset(0,+\infty)$ such that $\lim\limits_{n\to \infty}a_n=a_0\in(0,+\infty)$ and, for each $n\in\mathbb{N}$, $u_{a_n}$ be a minimizer of problem \eqref{eq10271528} for $a_n$. Clearly, $u_{a_n}/\sqrt{a_n}$ is a minimizer  of the corresponding problem \eqref{eq10281724} for $a_n$.
		
		On one hand, by the definition of $d_p^{a_0}(1)$, for any $\epsilon>0$, there exists some $u_\epsilon\in W^{\frac{1}{2}}(\mathbb{R}^N)$ with $\|u_\epsilon\|_2^2=1$, such that
		\begin{equation*}
			E_p^{{a_0}}(u_\epsilon)\leq d_p^{a_0}(1)+\frac{\epsilon}{a_0}.
		\end{equation*}
		Then, by \eqref{eq01142202} and \eqref{eq10282013}, we have that
		\[
		\limsup_{n\to \infty}{d}_p (a_n)
		=\limsup_{n\to \infty}a_n{d}_p^{a_n} (1)
		\le a_0 \limsup_{n\to \infty} E_p^{a_n}(u_\epsilon)
		=a_0E_p^{a_0}(u_\epsilon)\leq a_0d_p^{a_0}(1)+\epsilon=d_p(a_0)+\epsilon,
		\]
		which, together with the arbitrariness of  $\epsilon$, indicates that
		\begin{equation}\label{eq19.9}
			\limsup_{n\to \infty}{d}_p (a_n)\leq d_p(a_0).
		\end{equation}
		
		Arguing as in the proof of Theorem \ref{Thm10291145}, we deduce that $\{u_{a_n}\}$ is bounded in $W^{\frac 12} (\R^N)$. Therefore, a simple calculation shows that
		\begin{equation}\label{eq14.18}
			\begin{aligned}
				\liminf_{n\to \infty}{d}_p (a_n)
				&=\liminf_{n\to \infty}a_n{d}_p^{a_n} (1)
				=\liminf_{n\to \infty}a_nE_p^{a_n}(a_n^{-\frac12}u_{a_n}) \\
				&=\liminf_{n\to \infty}a_n\left[E_p^{a_0}(a_n^{-\frac12}u_{a_n})+\frac{a_0^\frac{p-2}{2}-a_n^\frac{p-2}{2}}{pa_n^\frac{p}{2}} \|u_{a_n}\|_p^{p}+\frac{\log a_0 - \log a_n}{2}\right]\\
				&\geq a_0d_p^{a_0}(1)={d}_p (a_0).
			\end{aligned}
		\end{equation}
		As a result, from (\ref{eq19.9}) and \eqref{eq14.18} it follows that
		\begin{equation*}
			\lim_{n\to \infty}{d}_p(a_n)=\lim_{n\to \infty}E_p(u_{a_n})= d_p(a_0),
		\end{equation*}
		that is, $\{u_{a_n}\}$ is a minimizing sequence of $d_p(a_0)$.

		Repeating similar arguments as  the proof Theorem \ref{Thm10291145} (see (\ref{eq14.19})--(\ref{eq10302059})) and applying Lemma \ref{lem10261928}, there exists some ${u}_{a_0}\in W^{\frac{1}{2}}(\mathbb{R}^N)$ such that
		$u_{a_n}\rightarrow {u}_{a_0}$ in $W^{\frac{1}{2}}(\mathbb{R}^N)$ as $n\to \infty$, where ${u}_{a_0}$ is a minimizer of $d_p(a_0)$,  completing the proof.
	\end{proof}

	{\bf Acknowledgments} 
	P. d'Avenia and  A. Pomponio are members of GNAMPA-INdAM and were supported
	by GNAMPA-INdAM Project 2026 (CUP E53C25002010001) and by the Italian Ministry of University and Research
	under the Program Department of Excellence L. 232/2016 (CUP D93C23000100001).
	Q. H. He was supported by the fund from Natural
	Science Foundation of Guangxi (2025GXNSFFA069011), National Natural
	Science Foundation of China (Nos. 12061012,12461022) and Guangxi Bagui
	Young Top Talent Program.

	\smallskip
	
	{\bf Data availability} Data sharing not applicable to this article as no datasets were generated or analysed during the current study.
	
	\smallskip
	
	{\bf Conflict of interest} The authors declare that they have no conflict of interest.

\end{document}